\DeclareMathAlphabet{\pazocal}{OMS}{zplm}{m}{n}
\DeclareMathAlphabet{\pazocalbf}{OMS}{cmsy}{b}{n}
\preto\subequations{\ifhmode\unskip\fi}
  \setlist[enumerate]{nosep, topsep=0pt, wide = 1em, leftmargin=*}
\newcommand{\clem}[2][]{\ifstrempty{#1}{%
  c_{\hyperref[#2]{L\ref*{#2}}}}{%
  c_{\hyperref[#2]{L\ref*{#2}#1}}}}
\newcommand{\cthm}[2][]{\ifstrempty{#1}{%
  c_{\hyperref[#2]{T\ref*{#2}}}}{%
  c_{\hyperref[#2]{T\ref*{#2}#1}}}}
\newcommand{\cass}[2][]{\ifstrempty{#1}{%
  c_{\hyperref[#2]{\ref*{#2}}}}{%
  c_{\hyperref[#2]{\ref*{#2}#1}}}}
\newcommand{\ceq}[1]{c_{\hyperref[#1]{(\ref*{#1})}}}
\newcommand{\leqc}{\lesssim}
\newcommand{\geqc}{\gtrsim}
\newcommand{\eqc}{\simeq}
\newcommand{\R}{\mathbb{R}}
\newcommand{\N}{\mathbb{N}}
\newcommand{\PP}{\mathbb{P}}
\newcommand{\ub}{\bm{u}}
\newcommand{\ubn}{\ub^n}
\newcommand{\ubnl}{\ubn_l}
\newcommand{\ubh}{\ub_h}
\newcommand{\ubhn}{\ubn_h}
\newcommand{\ubI}{\ub_I}
\newcommand{\ubIn}{\ubI^n}
\newcommand{\vb}{\bm{v}}
\newcommand{\vbh}{\vb_h}
\newcommand{\vbhl}{\vbh^{\ell}}
\newcommand{\wb}{\bm{w}}
\newcommand{\Ub}{\bm{\xi}}
\newcommand{\Ubn}{\Ub^n}
\newcommand{\Ubnl}{\Ubn_l}
\newcommand{\Ubh}{\Ub_h}
\newcommand{\Ubhn}{\Ubh^n}
\newcommand{\Vbb}{\bm{\zeta}}
\newcommand{\C}{\bm{\chi}}
\newcommand{\Ctn}{\C(t^n)}
\newcommand{\Ch}{\C_h}
\newcommand{\Cn}{\C^n}
\newcommand{\Cnl}{\Cn_l}
\newcommand{\Chn}{\Ch^n}
\newcommand{\lamb}{\bm{\lambda}}
\newcommand{\lambn}{\lamb^n}
\newcommand{\lambnl}{\lambn_l}
\newcommand{\lambh}{\lamb_h}
\newcommand{\lambhn}{\lambh^n}
\newcommand{\lambI}{\lamb_I}
\newcommand{\lambIn}{\lambI^n}
\newcommand{\lambt}{\widetilde{\lamb}_h}
\newcommand{\mub}{\bm{\mu}}
\newcommand{\mubh}{\mub_h}
\newcommand{\mubhl}{\mubh^{\ell}}
\newcommand{\Eu}[1][]{\mathbb{E}^{#1}}
\newcommand{\El}[1][]{\mathbb{L}^{#1}}
\newcommand{\EU}[1][]{\mathbb{U}^{#1}}
\newcommand{\EC}[1][]{\mathbb{V}^{#1}}
\newcommand{\II}{\pazocal{I}}
\newcommand{\Iu}{\II^u}
\newcommand{\Il}{\II^\lambda}
\newcommand{\erru}[1][]{\mathbf{e}_h^{#1}}
\newcommand{\errl}[1][]{\mathbf{l}_h^{#1}}
\newcommand{\errIu}[1][]{\bm{\eta}^{#1}}
\newcommand{\errIl}[1][]{\bm{\theta}^{#1}}
\newcommand{\Ern}{\mathfrak{E}^n}
\newcommand{\Ecn}{\Ern_\text{C}}
\newcommand{\EIn}{\Ern_\text{I}}
\newcommand{\Vb}{\bm{V}}
\newcommand{\Vbn}{\Vb^n}
\newcommand{\Vbh}{\Vb_h}
\newcommand{\Vbhn}{\Vbh^n}
\newcommand{\Nb}{\bm{N}}
\newcommand{\Nbn}{\Nb^n}
\newcommand{\Nbhn}{\Nbn_h}
\newcommand{\Hb}[3]{\pazocalbf{H}^{#1}_{#2}(#3)}
\newcommand{\Lb}[3]{\pazocalbf{L}^{#1}_{#2}(#3)}
\newcommand*{\Wb}[2]{\pazocalbf{W}^{#1}(#2)}
\renewcommand{\O}{\Omega}
\newcommand{\Oh}{\O_{h}}
\newcommand{\Ohn}{\Oh^{n}}
\newcommand{\On}{\O^{n}}
\newcommand{\Ot}{\widetilde\O}
\newcommand{\G}{\Gamma}
\newcommand{\Gh}{\G_{h}}
\newcommand{\Ghn}{\Gh^{n}}
\newcommand{\Gn}{\G^{n}}
\newcommand{\Gout}{\G_\text{out}}
\newcommand{\QQ}{\pazocal{Q}}
\newcommand{\OO}{\pazocal{O}}
\newcommand{\OT}{\OO_{\mathcal{T}}}
\newcommand{\Od}[1]{\OO_{\delta}(#1)}
\newcommand{\Odh}[1]{%
  \mathchoice{\OO_{\delta_{h}}(#1)}
  {\OO_{\delta_{h}}(#1)}
  {\OO_{\delta_{\adjustbox{scale=0.6}{$\scriptstyle h$}}}(#1)}
  {}}
\newcommand{\OdT}[2][\mathcal{T}]{%
  \mathchoice{\OO^{#2}_{\delta_{h}, #1}}
  {\OO^{#2}_{\delta_{h}, #1}}
  {\OO^{#2}_{\delta_{\adjustbox{scale=0.6}{$\scriptstyle h$}}, #1}}
  {}}
\newcommand{\OTn}{\OT^{n}}
\newcommand{\OGTn}{\OO_{\Ghn}^n}
\newcommand{\Scal}{\pazocal{S}}
\newcommand{\Spm}{\Scal^\pm}
\newcommand{\Sp}{\Scal^{+}}
\newcommand{\hmax}{h_{\text{max}}}
\newcommand{\Tht}{\widetilde{\mathcal{T}}_h}
\newcommand{\Thn}{\mathcal{T}^{n}_{h}}
\newcommand{\Thnd}{\mathcal{T}^{n}_{h,\delta_h}}
\newcommand{\ThGhn}{\mathcal{T}^{n}_{h,\Ghn}}
\newcommand{\ThnSpm}{\mathcal{T}^n_{h,\Spm}}
\newcommand{\ThnSp}{\mathcal{T}^n_{h,\Sp}}
\newcommand{\scalesixtypercent}[1]{\adjustbox{scale=0.6}{$\scriptstyle #1$}}
\newcommand{\Fhnd}{%
  \mathchoice{\mathcal{F}^n_{h,\delta_h}}
   {\mathcal{F}^n_{h,\delta_h}}
   {\mathcal{F}^n_{h,\delta_{\scalesixtypercent{h}}}}
   {}}
\newcommand{\wF}{\omega{_F}}
\newcommand{\gsu}{\gamma_{gp}}
\newcommand{\gsl}{\gamma_{\lambda}}
\newcommand{\gb}{\bm{g}}
\newcommand{\Fb}{\bm{F}}
\newcommand{\Fbh}{\Fb_h}
\newcommand{\Fbhn}{\Fbh^n}
\newcommand{\xb}{\bm{x}}
\newcommand{\nb}{\bm{n}}
\newcommand{\wninf}{\bm{w}^{\nb}_{\infty}}
\newcommand{\tend}{t_\text{end}}
\newcommand{\dt}{\Delta t}
\newcommand{\cphininv}{\circ(\Phi^n)^{-1}}
\newcommand{\cphin}{\circ\Phi^n}
\newcommand{\jump}[1]{\llbracket#1\rrbracket}
\newcommand{\restr}[2]{{\left.\kern-\nulldelimiterspace#1\right|_{#2}}}
\newcommand{\Ex}{\pazocal{E}}
\newcommand{\lprod}[3]{\big(#1,#2\big)_{#3}}
\newcommand{\nrm}[2]{\big\Vert#1\big\Vert_{#2}}
\newcommand{\tripnrm}[1]{\mathopen{|\mkern-1.5mu|\mkern-1.5mu|}%
  #1\mathclose{|\mkern-1.5mu|\mkern-1.5mu|}}
\newcommand{\tnrms}[2][n]{\tripnrm{#2}_{\ast,#1}}
\newcommand{\myplus}{\mkern-1.5mu+\mkern-1.5mu}
\newcommand{\MF}{\mathfrak{M}}
\newcommand{\TT}{\mathfrak{T}}
\newcommand{\eps}{\varepsilon}
\DeclareMathOperator{\meas}{meas}
\DeclareMathOperator{\dist}{dist}
\DeclareMathOperator{\esssup}{ess\,sup}
\DeclareMathOperator{\id}{id}
\title{Error analysis for a parabolic PDE model problem on a coupled moving domain in a fully Eulerian framework
\thanks{Submitted to the editors DATE.
\funding{This work was funded by the German Science Foundation (DFG) within the project 314838170, GRK 2297 MathCoRe, HvW acknowledges support through Austrian Science Fund (FWF) project F65.}}
}
\author{Henry von Wahl\thanks{Institute for Mathematics, Universität Wien, Austria (\email{henry.wahl@univie.ac.at})}
\and
Thomas Richter\thanks{Institute for Analysis and Numerics, Otto-von-Guericke-Universität Magdeburg, Germany (\email{thomas.richter@ovgu.de})}
}
\begin{document}
\maketitle

\begin{abstract}
We introduce an unfitted finite element method with Lagrange-multipliers to
study an Eulerian time stepping scheme for moving domain problems applied to
a model problem where the domain motion is implicit to the problem. We consider
a parabolic partial differential equation (PDE) in the bulk domain,
and the domain motion is described by an ordinary differential equation (ODE),
coupled to the bulk partial differential
equation through the transfer of forces at the moving interface. The
discretisation is based on an unfitted finite element discretisation on a
time-independent mesh. The method-of-lines time discretisation is enabled by
an implicit extension of the bulk solution through additional stabilisation,
as introduced by Lehrenfeld \& Olshanskii (ESAIM: M2AN, 53:585--614, 2019).
The analysis of the coupled problem relies on the Lagrange-multiplier
formulation, the fact that the Lagrange-multiplier solution is equal to the
normal stress at the interface and that the motion of the interface is given
through rigid body motion. This paper covers the complete stability analysis
of the method and an error estimate in the energy norm, under an assumption
on the discrete interface velocity. This includes the
dynamic error in the domain motion resulting from the discretised ODE and the
forces from the discretised PDE. To the best of our knowledge this is the first
error analysis of this type of coupled moving domain problem in a fully
Eulerian framework. Numerical examples illustrate the theoretical results.
\end{abstract}

\begin{keywords}
  Eulerian time stepping, coupled moving domain problems, unfitted FEM, ghost penalty
\end{keywords}

\begin{AMS}
  65M12, 65M60, 65M85
\end{AMS}

\section{Introduction}
\label{sec.intro}

Particulate flows, particle settling and in the broader sense fluid
solid interactions play a major role in applications,
ranging from medicine~\cite{Dong2017,Ric17,FailerMinakowskiRichter2021} and
biology~\cite{Maggi2013} to industry~\cite{BazilevsTakizawaTezduyar2013,
Sundaresan2003}.

The most well-established method to solve the resulting fluid-structure
interaction problem is the so-called Arbitrary Lagrangian-Eulerian (ALE)
method~\cite{DGH82}. Here a mesh of a reference geometry is created,
and the moving domain problem is solved by mapping the equations into the
reference configuration. A significant burden in this approach occurs when the
deformation with respect to the reference configuration becomes very large. In
this case, re-meshing procedures~\cite{ShamanskiySimeon21} must be
included, or Eulerian approaches~\cite{Peskin1972,Richter2013} need to be
considered. In this paper, we shall focus on the latter approach. In
particular, we shall focus on an unfitted Eulerian approach in the context of
fluid-rigid body interactions. Such Eulerian approaches are based on a fixed
background mesh to define a set of potential unknowns, and the geometry of the
problem is described separately.

The main challenge in Eulerian approaches for time-dependent moving domain
problems is the approximation of the time-derivative. Standard
approximations based on finite differences are not easily applicable since
the expression $\partial_t u\approx (u^n - u^{n-1})/\dt$ is not well-defined
if $u^{n-1}$ and $u^n$ live on different domains. A successful approach to
deal with this challenge is a class of space-time Galerkin formulation in an
Eulerian setting. This approach has been proven to work for scalar bulk
problems~\cite{LR13,Leh15,Pre18,Zah18}, problems on moving
surfaces~\cite{OR14,ORX14} and coupled bulk-surface problems~\cite{HLZ16}.
However, space-time Galerkin methods have the draw-back, that a
higher-dimensional problem has to be solved. This problem can be circumvented
by an approach using adjusted quadrature rules to reduce the space-time
problem into a classical time stepping scheme~\cite{FR17}. However, this comes
at the expense of costly computations of projections between different function
spaces.

In this paper, we shall follow a different approach that recovers the use of
standard time stepping schemes by using an extension of the previous
solution to the domain of the next time step.
This concept was first introduced in~\cite{OX17} for problems on moving
surfaces and then for scalar bulk convection-diffusion problems in~\cite{LO19}. 
The essential idea in the latter is to apply additional stabilisation in a 
strip around the moving interface, such that the discrete solution $u^{n-1}_h$
is well-defined in a larger, non-physical domain $\Odh{\Oh^{n-1}}\supset\Ohn$.
As a result, the expression $(u_h^n - u_h^{n-1})/\dt$ is again well-defined on 
the domain $\Ohn$.

This unfitted finite element method with Eulerian time stepping schemes for
partial differential equation problems posed on moving domains has so far been
considered for problems where the motion of the domain is a given
quantity~\cite{LO19,BFM19,vWRL20,LL21,AB21}. Furthermore, the method developed
in these papers has been successfully applied to a fluid-structure
interaction problem, where the geometry motion is part of the problem to be
solved~\cite{vWRFH21,vWR21}. However, no error analysis is available for this
setting. 

The main contribution of this paper is the development of an error estimate
for this Eulerian time stepping scheme for a partial differential equation 
(PDE) in a moving domain, where the domain motion is driven by an ordinary
differential equation (ODE) coupled to the PDE. To this end, we consider a set
of simplified equations to analyse this kind of Eulerian time stepping with
coupled domain motion. This will be a parabolic PDE in the time-dependent
bulk domain, while the motion of the moving interface is driven by
translational rigid body motion. These two equations are then coupled on the
moving interface by the non-homogeneous Dirichlet boundary conditions and the
forces acting on the moving interface. The coupling condition is the
same that would be typical for standard fluid-structure interactions
problems~\cite{Ric17}. The main simplification is the restriction to a 
parabolic PDE model, which allows us to avoid the additional 
difficulties that would be involved in treating the divergence
constraint. This restriction allows us a clearer presentation of the
nevertheless technically complex proofs. We assume that an extension
to the Stokes equations would not bring any significant surprises.    

The remainder of this paper is structured as follows. In \cref{sec.model},
we discuss the mathematical model under consideration and show the unique
solvability thereof. We then begin by a temporal semi discretisation of the
problem in \cref{sec.temp-discr} and show the stability of the resulting
scheme under an assumption on the discrete interface velocity. \Cref{sec.discrete} then covers the
full discretisation of our problem. We introduce our CutFEM
Lagrange-multiplier discretisation, then show the discrete problem's
solvability and stability of the discrete scheme. We then quantify the error
in the time-dependent geometry resulting from the discretisation of the ODE
governing the motion of the domain. This is then used to prove a consistency
error estimate and finally an error estimate in the energy norm. In
\cref{sec.num-examples}, we illustrate our theoretical results with some
numerical examples, including extensions to higher-order in both space and
time. Finally, we give a brief summary of the results and an
outlook for potential future work in \cref{sec.conclusions}.

\section{Mathematical Problem}
\label{sec.model}

Let $\Ot\subset\R^d$, for $d\in\{2,3\}$, be an open bounded domain, which we
denote as the background domain. We divide $\Ot$ into the $d$-dimensional open
bulk domain of interest $\O(t)$, the $d$-dimensional complement of $\Ot$ in
$\Ot$ denoted as $\Sigma(t)= \Ot\setminus\overline\O(t)$ and $d-1$-dimensional
interface $\Gamma(t)=\partial\Sigma$ between the two, i.e., $\Ot = \O(t) 
\dot\cup \G(t) \dot\cup\Sigma(t)$. We assume that the interface $\G(t)$ can be
described by a smooth level set function 
$\phi(t,\xb)$, i.e,
\begin{equation*}
  \G(t) = \{\xb\in\Ot \;\vert\; \phi(t, \xb) = 0 \}
  \qquad\text{and}\qquad
  \O(t) = \{\xb\in\Ot \;\vert\; \phi(t, \xb) < 0 \}.
\end{equation*}
Furthermore, we denote the fixed part of the boundary of the bulk
domain as $\Gout=\partial\Ot$. A sketch of such a domain can be seen in
\cref{fig.intro:domain-sketch}. Now let $[0,\tend]$ be a finite time
interval and assume that $\O(t)\subset\Ot$ for all $t\in[0,\tend]$.
We then define the space-time domain
\begin{equation*}
  \QQ\coloneqq\bigcup_{t\in(0,\tend)}\O(t)\times\{t \}.
\end{equation*}
Let $\ub$ denote the solution in the bulk domain $\O$, $\Ub$ the velocity of
the interface $\G$ and $\C$ the centre of mass of $\Sigma$ relative to the
initial position.
In $\QQ$, we then consider the vector-valued parabolic model problem
\begin{subequations}\label{eqn:system.strong-pde}
  \begin{align}
    && \partial_t\ub - \Delta\ub &= \bm{0}& \text{in }& \O(t)\times(0, \tend]\\
    && \ub &= \Ub                         & \text{in }& \G(t)\times(0, \tend]\\
    && \ub &= \bm{0}                      & \text{in }& \Gout\times(0, \tend]\\
    && \ub(0) &= \ub_0                    & \text{in }&\O(0)
  \end{align}
\end{subequations}
where the motion of the moving interface $\G(t)$ is determined by
\begin{subequations}\label{eqn:system.strong-ode}
  \begin{align}
    && \frac{\dif}{\dif t}\Ub &= \gb + \Fb& \text{in } &(0, \tend]\\
    && \frac{\dif}{\dif t}\C &= \Ub & \text{in } &(0, \tend]\\
    && \Ub(0) &= \Ub_0\\
    && \C(0) &= \bm{0}
  \end{align}
\end{subequations}
Here, $\ub_0$ and $\Ub_0$ are given initial conditions, $\gb$ is a constant external
force acting on $\Sigma$, and $\Fb = - \int_{\G(t)} \partial_{\nb}\ub\dif s$ is the
force acting from $\O$ onto $\Sigma$, and $\G$ moves with velocity
$\Ub$ through $\Ot$. The system \eqref{eqn:system.strong-pde} can be seen as a
simplification of the transient Stokes equations on a moving
domain~\cite{vWRL20,BFM19} by restricting the velocity to the space of
divergence-free functions, but with the added complexity of the motion being
driven by an ODE, coupled to the bulk equations through the transfer forces.
We consider this model problem, since this already shows the difficulties in
the error analysis of a coupled moving domain problem in a purely Eulerian
framework.

The more complicated problem of coupling a rigid body to the non-linear Navier-Stokes equations has been studied extensively. In~\cite{DesjardinsEsteban1999} the authors show the existence of weak solutions which are global in time up to collision of the rigid body with the boundary. In three spatial dimensions smallness of the data is required. Strong solutions are studied in~\cite[Theorem 2.2]{Takahashi2003} and their unique existence, global in time up to collision, is shown in the two dimensional case. In the three dimensional case, solutions that are global in time are shown to exist under smallness requirements on the data. For the linearised case, the authors of~\cite{Maity2017} could even show maximal regularity. 

\begin{figure}
  \centering
  \includegraphics{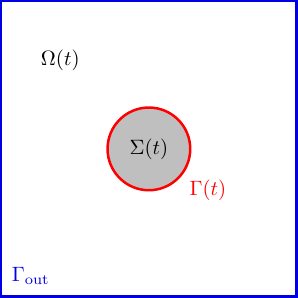}
  \caption{Sketch of an example background domain, divided into the bulk domain of interest $\O(t)$, the complementary domain $\Sigma(t)$, the moving interface $\G(t)$ and fixed outer boundary $\Gout$.}
  \label{fig.intro:domain-sketch}
\end{figure}

\subsection{Stability estimate}
\label{sec.model:subsec.solvability}

We begin by showing that the system \eqref{eqn:system.strong-pde}--
\eqref{eqn:system.strong-ode} satisfies a stability estimate, depending only
on the problem data. Let us consider the spaces
\begin{equation*}
  \Vb(t) = \{\vb\in\Hb{1}{}{\O(t)}\;\vert\; \restr{\vb}{\Gout} = 0 \}
  \qquad\text{and}\qquad
  \Nb(t) = \Hb{-1/2}{}{\G(t)}.
\end{equation*}
Multiplying \eqref{eqn:system.strong-pde} with test-functions from the
appropriate spaces and by using integration by parts we get the weak
formulation: Find $(\ub(t),\lamb(t),\Ub(t),\C(t))\in
\Vb(t)\times\Nb(t)\times\R^d\times\R^d$ such that
\begin{multline}\label{eqn.model:weak-form}
  \lprod{\partial_t\ub}{\vb}{\O(t)} + \lprod{\nabla\ub}{\nabla\vb}{\O(t)}
  + \lprod{\lamb}{\vb}{\G(t)} + \lprod{\mub}{\ub -\Ub}{\G(t)}\\
  + \lprod{\frac{\dif}{\dif t}\Ub}{\Vbb_1}{2} 
  + \lprod{\frac{\dif}{\dif t}\C}{\Vbb_2}{2}
  = \lprod{\gb + \Fb}{\Vbb_1}{2} + \lprod{\Ub}{\Vbb_2}{2}
\end{multline}
holds for all $(\vb,\mub,\Vbb_1, \Vbb_2)\in
\Vb(t)\times\Nb(t)\times\R^d\times\R^d$. Furthermore, the solution
of the Lagrange-multiplier is $\lamb =- \partial_{\nb}\ub$,
see \cite[Theorem~3.2]{Bab73}.

\begin{lemma}\label{lemma.cont-stability}
For the velocity and position solution $(\ub,\Ub, \C)$ of 
\eqref{eqn.model:weak-form}, it holds that
\begin{multline*}
  \nrm{\ub(t)}{\O(t)}^2 + \nrm{\Ub(t)}{2}^2 + \nrm{\C(t)}{2}^2
  + \int_{0}^t\nrm{\nabla\ub(s)}{\O(s)}^2\dif s\\
  \leq \exp(t)\left[\nrm{\ub_0}{\O(0)}^2 + \nrm{\Ub_0}{2}^2\right] +
   \clem{lemma.cont-stability}(\exp(t) - 1)\nrm{\gb}{2}^2,
\end{multline*}
with a constant $\clem{lemma.cont-stability}>0$ that only depends on the domain size
$\vert\O\vert$ and $\vert\G\vert$.
\end{lemma}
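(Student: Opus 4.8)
The plan is to test \eqref{eqn.model:weak-form} with the solution itself. Since \eqref{eqn.model:weak-form} holds for \emph{all} test triples, it is equivalent to its three constituent identities (the heat equation paired with $\vb$, the Dirichlet coupling paired with $\mub$, and the ODE paired with $\Vbb$), and I would test these with $\vb=\ub$, $\mub=\lamb$ and $\Vbb=\Ub$ respectively and then add. The heat-equation part gives
\begin{equation*}
  \lprod{\partial_t\ub}{\ub}{\O(t)}+\nrm{\nabla\ub}{\O(t)}^2+\lprod{\lamb}{\ub}{\G(t)}=0,
\end{equation*}
the coupling gives $\lprod{\lamb}{\ub}{\G(t)}=\lprod{\lamb}{\Ub}{\G(t)}$, and the ODE gives $\lprod{\tfrac{\dif}{\dif t}\Ub}{\Ub}{2}=\lprod{\gb+\Fb}{\Ub}{2}$. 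Because the motion is rigid, $\Ub$ is constant along $\G(t)$, so $\lprod{\lamb}{\Ub}{\G(t)}=\Ub\cdot\int_{\G(t)}\lamb\dif s=\Ub\cdot\Fb$, where $\Fb=\int_{\G}\lamb\dif s$ follows from $\lamb=-\partial_{\nb}\ub$. Thus the interface term in the heat equation equals $-\Ub\cdot\Fb$ and cancels exactly against the $+\Fb\cdot\Ub$ produced by the ODE. Adding the three identities removes the entire saddle-point coupling and leaves only $\lprod{\partial_t\ub}{\ub}{\O(t)}$, the dissipation $\nrm{\nabla\ub}{\O(t)}^2$, the time derivative of $\nrm{\Ub}{2}^2$, and the external force $\lprod{\gb}{\Ub}{2}$.

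The next step is to convert $\lprod{\partial_t\ub}{\ub}{\O(t)}$ into a time derivative of the energy. Since the domain moves, I would apply the Reynolds transport theorem,
\begin{equation*}
  \frac{\dif}{\dif t}\nrm{\ub}{\O(t)}^2
  = 2\lprod{\partial_t\ub}{\ub}{\O(t)} + \int_{\G(t)}\abs{\ub}^2\,(\Ub\cdot\nb)\dif s,
\end{equation*}
where only $\G(t)$ contributes because $\ub$ vanishes on the fixed outer boundary $\Gout$. Here rigidity is again decisive: on $\G(t)$ we have $\ub=\Ub$, so $\abs{\ub}^2=\abs{\Ub}^2$ is constant and the boundary integral reduces to $\abs{\Ub}^2\,\Ub\cdot\int_{\G(t)}\nb\dif s=0$ by the divergence theorem applied on the bounded region enclosed by $\G(t)$. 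Combining this with the cancellation above yields the clean balance
\begin{equation*}
  \tfrac12\frac{\dif}{\dif t}\big(\nrm{\ub}{\O(t)}^2+\nrm{\Ub}{2}^2\big)
  + \nrm{\nabla\ub}{\O(t)}^2 = \lprod{\gb}{\Ub}{2}.
\end{equation*}

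Finally I would integrate this identity over $(0,t)$ and estimate the forcing. The key point, which avoids invoking Grönwall and hence keeps the constant domain-dependent, is that $\Ub$ is the constant trace of $\ub$ on $\G(t)$: a trace inequality together with the Poincaré--Friedrichs inequality (valid since $\ub=\bm0$ on $\Gout$, a boundary piece of positive measure) gives $\nrm{\Ub}{2}^2\leq C\,\nrm{\nabla\ub}{\O(t)}^2$ with $C$ depending only on $\O$ and $\G$, uniformly in $t$ thanks to the rigidity of the configuration (in particular $\meas(\G(t))$ is constant). Estimating $\lprod{\gb}{\Ub}{2}\leq\sqrt{C}\,\nrm{\gb}{2}\,\nrm{\nabla\ub}{\O(t)}\leq\tfrac{C}{2}\nrm{\gb}{2}^2+\tfrac12\nrm{\nabla\ub}{\O(t)}^2$ by Young's inequality then lets me absorb $\tfrac12\int_0^t\nrm{\nabla\ub}{\O(s)}^2\dif s$ into the dissipation on the left, and after multiplying by two the stated bound follows with $\clem{lemma.cont-stability}=C$. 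The main obstacle I anticipate is the careful bookkeeping of the interface terms so that the multiplier genuinely cancels the coupling force, together with justifying the transport and divergence-theorem manipulations on the moving domain; once the clean energy identity is in place, the remaining estimate is routine.
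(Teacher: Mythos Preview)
Your proposal is correct and follows essentially the same route as the paper: test with $(\ub,\pm\lamb,\Ub)$, use that $\lamb=-\partial_{\nb}\ub$ and $\Ub\in\R^d$ to cancel the interface force against $\Fb\cdot\Ub$, apply Reynolds transport and kill the boundary flux via $\int_{\G}\nb\,\dif s=0$, then control $\lprod{\gb}{\Ub}{2}$ by trace plus Poincar\'e and absorb with Young. The only cosmetic differences are that the paper tests the monolithic form with $\mub=-\lamb$ rather than separating the three identities, and it writes the forcing estimate as $\lprod{\gb}{\Ub}{2}=\vert\G\vert^{-1}\lprod{\gb}{\ub}{\G(t)}$ before applying trace/Poincar\'e, which is exactly your bound $\nrm{\Ub}{2}\leq\sqrt{C}\,\nrm{\nabla\ub}{\O(t)}$ with $C=c_\O^2c_P^2/\vert\G\vert$.
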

\begin{proof}
Testing \eqref{eqn.model:weak-form} with $(\vb,\mub,\Vbb_1, \Vbb_2)=
(\ub,-\lamb,\Ub, \C)$ gives
\begin{multline}\label{eqn.model:weakform.tested}
  \underbrace{\lprod{\partial_t\ub}{\ub}{\O(t)}}_{\TT_1}
  + \nrm{\nabla\ub}{\O(t)}^2
  \underbrace{-\lprod{\partial_{\nb}\ub}{\Ub}{\G(t)}}_{\TT_2}
  + \underbrace{\lprod{\frac{\dif}{\dif t} \Ub}{\Ub}{2}}_{\TT_3}
  + \underbrace{\lprod{\frac{\dif}{\dif t} \C}{\C}{2}}_{\TT_4}\\
    = \underbrace{\lprod{\gb+\Fb}{\Ub}{2}}_{\TT_5}
     + \underbrace{\lprod{\Ub}{\C}{2}}_{\TT_6}.
\end{multline}
Using the Reynolds transport theorem for moving domains and the fact
that $\Ub$ is both the velocity of the moving interface and the trace of
$\ub$ on this interface, we have
\begin{equation*}
  \TT_1 = \lprod{\partial_t\ub}{\ub}{\O(t)}
    = \frac{1}{2}\Big(\frac{\dif}{\dif t}\nrm{\ub}{\O(t)}^2
      - \int_{\G(t)}(\ub\cdot\ub)\Ub\cdot\nb\dif s \Big).
\end{equation*}
Now, since we have that $\restr{\ub}{\G}=\Ub$, we may interchange $\Ub$ and
$\ub$ in the integral over the interface. Using the fact that $\Ub$ is constant
in space, we then find using the divergence theorem that
\begin{equation*}
  -\int_{\G(t)}(\ub\cdot\ub)\Ub\cdot\nb\dif s
   = -\nrm{\Ub}{2}^2\int_{\G(t)}\Ub\cdot\nb\dif s
   = -\nrm{\Ub}{2}^2\int_{\O(t)}\nabla\cdot\Ub\dif\xb
   = 0.
\end{equation*}
As a result, we have
\begin{equation}\label{eqn.model:cont-est:T1}
  \lprod{\partial_t\ub}{\ub}{\O(t)}
    = \frac{1}{2}\frac{\dif}{\dif t}\nrm{\ub}{\O(t)}^2.
\end{equation}
Again, using the fact that $\Ub$ is constant in space, we find that
\begin{equation}\label{eqn.model:cont-est:T2}
  \TT_2 = -\lprod{\partial_{\nb}\ub}{\Ub}{\G(t)}
    = - \int_{\G(t)}\partial_{\nb}\ub\dif s \cdot \Ub = \lprod{\Fb}{\Ub}{2}.
\end{equation}
This then cancels with the drag contribution on the right-hand side of
\eqref{eqn.model:weakform.tested} in $\TT_5$. For the third and fourth term, we
immediately have
\begin{equation}\label{eqn.model:cont-est:T3-4}
  \TT_3 = \lprod{\frac{\dif}{\dif t} \Ub}{\Ub}{2}
    = \frac{1}{2}\frac{\dif}{\dif t}\nrm{\Ub}{2}^2
  \qquad\text{and}\qquad
  \TT_4 = \lprod{\frac{\dif}{\dif t} \C}{\C}{2}
    = \frac{1}{2}\frac{\dif}{\dif t}\nrm{\C}{2}^2.
\end{equation}
Under our assumption that $\gb$ is constant in space and using that
$\ub=\Ub$ on $\G(t)$, we can rewrite the first part of the fourth term as an 
integral over $\G(t)$, i.e.,
$\lprod{\gb}{\Ub}{2} = \frac{1}{\vert\G\vert}\lprod{\gb}{\ub}{\G(t)}$.
Using the trace and Poincaré estimates, we then find
\begin{equation*}
  \vert \lprod{\gb}{\Ub}{2} \vert
    \leq \vert\G\vert^{-\frac{1}{2}} \nrm{\gb}{2}
      c_{\O}\nrm{\ub}{\Hb{1}{}{\O(t)}}
    \leq \vert\G\vert^{-\frac{1}{2}} \nrm{\gb}{2}
      c_{\O}c_P \nrm{\nabla\ub}{\O(t)},
\end{equation*}
where with an abuse of notation, we set $c_P = \max\{2, c_P\}$.
Note that the Poincaré inequality is applicable, since $\restr{\ub}{\Gout}=0$,
c.f.~\cite[Remark~A.37]{Joh16}. With a weighted Young's inequality, we then
have
\begin{equation}\label{eqn.model:cont-est:T5a}
   \vert \lprod{\gb}{\Ub}{2} \vert
    \leq \frac{c_\O^2c_P^2}{2\vert\G\vert}\nrm{\gb}{2}^2
    + \frac{1}{2}\nrm{\nabla\ub}{\O(t)}^2.
\end{equation}
For the final term, we use the Cauchy-Schwarz and Young's inequalities to 
estimate
\begin{equation}\label{eqn.model:cont-est:T6}
  \TT_6 = \lprod{\Ub}{\C}{2} \leq \nrm{\Ub}{2}\nrm{\C}{2}
    \leq \frac{1}{2}\nrm{\Ub}{2}^2 + \frac{1}{2}\nrm{\C}{2}^2.
\end{equation}
We insert
\eqref{eqn.model:cont-est:T1}, \eqref{eqn.model:cont-est:T2},
\eqref{eqn.model:cont-est:T3-4}, \eqref{eqn.model:cont-est:T5a}
\eqref{eqn.model:cont-est:T6} into \eqref{eqn.model:weakform.tested} to get
\begin{equation*}%
  \frac{\dif}{\dif t}\nrm{\ub}{\O(t)}^2
    + \nrm{\nabla\ub}{\O(t)}^2 + \frac{\dif}{\dif t}\nrm{\Ub}{2}^2
    + \frac{\dif}{\dif t}\nrm{\C}{2}^2
  \leq \frac{c_\O^2 c_P^2}{\vert\G\vert}\nrm{\gb}{2}^2
    + \nrm{\Ub}{2}^2 + \nrm{\C}{2}^2.
\end{equation*}
Using a version of Gronwall's lemma in differential form,
see~\cite[Lemma~A.55]{Joh16}, and setting $\clem{lemma.cont-stability} 
\coloneqq c_\O^2 c_P^2/\vert\G\vert$ proves the claim. We note
that the dependence on the domain in $\clem{lemma.cont-stability}$ is through
the $d$-dimensional measure of $\O$ and $d-1$-dimensional measure of $\G$,
see~\cite{Gal11} for details. Since our problem only contains rigid body
motion, this is constant and does not depend on the solution.
\end{proof}

\section{Discretisation in Time}
\label{sec.temp-discr}

As a first step, we consider the temporal semi-discretisation of
\eqref{eqn:system.strong-pde}--\eqref{eqn:system.strong-ode} in an Eulerian
framework. To this end, let us consider a uniform time step $\dt \coloneqq
\tend / N$ for some $N\in\N$ and denote $t^n = n\dt$. We define the 
$\delta$-neighbourhood of $\O(t)$ as
\begin{equation*}
  \Od{\O(t)} \coloneqq \{ \xb \in \Ot \;\vert\; \dist(\xb,\O(t))\leq \delta\}.
\end{equation*}
As in~\cite{LO19,BFM19,vWRL20}, the Eulerian time stepping method requires
$\delta$ to be sufficiently large, such that the domain $\O(t^n)$ is a subset
of the $\delta$-neighbourhood of the the previous time step, i.e,
\begin{equation}\label{eqn.temp-discr:domain-inclusion}
  \O(t^n) \subset \Od{\O(t^{n-1})},\quad\text{for }n=1,\dots,N.
\end{equation}
In the aforementioned literature, the motion of the interface was a known
quantity, such that the relation \eqref{eqn.temp-discr:domain-inclusion} is
guaranteed by setting $\delta$ proportional to the maximal interface
normal speed and the time step. In our case, the motion of the interface is
an additional unknown in the system. However, since we know by
\cref{lemma.cont-stability} that the interface-velocity solution is bounded,
we make the following assumption:

\subsection{Temporal Discretisation}
\label{sec.temp-discr:subsec.method}

To enable our Eulerian time stepping, we need a suitable extension operator.
For this extension operator, we require the following family of space-time
anisotropic spaces
\begin{equation*}
  \def\arraystretch{1.2}
  \Lb{\infty}{}{0,T;\Hb{m}{}{\O(t)}}\coloneqq
  \left\{\vb\in\Lb{2}{}{\QQ}\middle\vert\!\!
    \begin{array}{l}
      \vb(\cdot,t)\in\Hb{m}{}{\O(t)}\text{ for a.e. }t\in(0,T)\\
      \quad\text{ and } \esssup_{t\in(0,T)}
        \nrm{\vb(\cdot,t)}{\Hb{m}{}{\O(t)}}<\infty
    \end{array}\!\!
  \right\},
\end{equation*}
for $m=0,\dots,k+1$. We then denote $\partial_t\vb=\vb_t$ as the weak partial
derivative with respect to the time variable, if this exists as an element of
the space-time space $\Lb{2}{}{\QQ}$.
We now assume the existence of a spatial extension operator
\begin{equation*}
    \Ex:\Lb{2}{}{\O(t)} \rightarrow \Lb{2}{}{\Od{\O(t)}},
\end{equation*}
which fulfils the following properties:

\begin{assumption}\label{assump.eulerian:extension}
Let $\vb\in\Lb{\infty}{}{0,T;\Hb{k+1}{}{\O(t)}}\cap\Wb{2,\infty}{\QQ}$ and
$\delta>0$. There exist positive constants $\cass[a]{assump.eulerian:extension}$,
$\cass[b]{assump.eulerian:extension}$ and $\cass[c]{assump.eulerian:extension}$
that are uniform in $t$ such that
\begin{subequations}
    \begin{align}
        \nrm{\Ex\vb}{\Hb{k}{}{\Od{\O(t)}}} &\leq
            \cass[a]{assump.eulerian:extension}\nrm{\vb}{\Hb{k}{}{\O(t)}}
            \label{eqn.eulerian:ExtentionExtimate}\\
        \nrm{\nabla(\Ex\vb)}{\Od{\O(t)}} &\leq
            \cass[b]{assump.eulerian:extension}\nrm{\nabla\vb}{\O(t)}
            \label{eqn.eulerian:GradExtentionExtimate}\\
        \nrm{\Ex\vb}{\Wb{2,\infty}{\Od{\QQ}}} &\leq
            \cass[c]{assump.eulerian:extension}\nrm{\vb}{\Wb{2,\infty}{\QQ}}
            \label{eqn.eulerian:W2ExtentionExtimate}
    \end{align}
\end{subequations}
holds. Furthermore, if for $\vb\in\Lb{\infty}{}{0,T;\Hb{k+1}{}{\O(t)}}$ it
holds for the weak partial time-derivative that
$\vb_t\in\Lb{\infty}{}{0,T;\Hb{k}{}{\O(t)}}$, then
\begin{equation}\label{eqn.eulerian:time-deriv-extension}
    \nrm{(\Ex\vb)_t}{\Hb{k}{}{\Od{\O(t)}}} \leq
    \cass[d]{assump.eulerian:extension}\left[\nrm{\vb}{\Hb{k+1}{}{\O(t)}} +
    \nrm{\vb_t}{\Hb{k}{}{\O(t)}}\right],
\end{equation}
where the constant $\cass[d]{assump.eulerian:extension}>0$ again only
depends on the motion of the spatial domain.
\end{assumption}

Such an extension operator can be constructed explicitly from the classical
linear and continuous \emph{universal} extension operator for Sobolev spaces
(see, e.g.,~\cite[Section VI.3]{Ste70}), when the motion of the domain is
described by a diffeomorphism $\bm{\Psi}(t)\colon\O_0\rightarrow\O(t)$ for each
$t\in[0,T]$ from the reference domain $\O_0$ that is smooth in time.
See~\cite{LO19} for details thereof. Although the motion of the domain is not
given a priori here, we assume that the resulting motion is sufficiently
smooth.

For the weak formulation of the semi-discrete problem, let us consider 
the spaces $\Vbn\coloneqq\Vb(t^n)$ and $\Nbn\coloneqq\Nb(t^n)$.
The temporal semi-discrete weak formulation of our scheme then reads as
follows: Given compatible initial data $(\O_0, \ub_0, \Ub_0)$, i.e., 
$\restr{\ub_0}{\G(0)} = \Ub_0$, for $n=1,\dots,N$ find
$(\ubn,\lambn,\Ubn,\Cn) \in \Vbn\times\Nbn\times\R^d\times\R^d$ such that
\begin{multline}\label{eqn.temp-semi:weak-form}
  \lprod{\frac{1}{\dt}(\ubn - \Ex\ub^{n-1})}{\vb}{\On}
  + \lprod{\nabla\ubn}{\nabla\vb}{\On}
  + \lprod{\lambn}{\vb}{\Gn} + \lprod{\mub}{\ubn - \Ubn}{\Gn}\\
  + \lprod{\frac{1}{\dt}(\Ubn - \Ub^{n-1})}{\Vbb_1}{2}
  + \lprod{\frac{1}{\dt}(\Cn - \C^{n-1})}{\Vbb_2}{2}
  = \lprod{\Fb^n + \gb}{\Vbb_1}{2} + \lprod{\Ub^n}{\Vbb_2}{2}
\end{multline}
holds for all $(\vb, \mub, \Vbb_1, \Vbb_2) \in
\Vbn\times\Nbn\times\R^d\times\R^d$.

In order to specify the appropriate choice of $\delta$ in the above method,
we require the following assumption.

\begin{assumption}\label{assumption.discrete-velocity}
We assume that the time step $\dt>0$ is sufficiently small, such that there
exists a constant $\cass{assumption.discrete-velocity}>1$ with
\begin{equation*}
  \wb^{\nb,\dt}_{\infty}
  \leq \cass{assumption.discrete-velocity} \wninf ,
\end{equation*}
where $\wb^{\nb,\dt}_{\infty}=\max_{i=1,\dots N}
\nrm{\Ub^i \cdot \nb}{\pazocal{L}^\infty(\G(t))}$ is the maximal
interface velocity resulting from the temporally semi-discretised scheme,
$\wninf  = \max_{t\in[0,\tend]}
\nrm{\Ub(t) \cdot \nb}{\pazocal{L}^\infty(\G(t))}$ is the maximal interface
velocity of the smooth problem \eqref{eqn.model:weak-form} and $\nb$ is the
outward pointing unit normal vector vector on $\G(t)$.
\end{assumption}

With \cref{assumption.discrete-velocity}, we then set
\begin{equation}\label{eqn.assumption.discrete-velocity}
  \delta = c_{\delta} \dt \wninf,
\end{equation}
with $c_\delta > \cass{assumption.discrete-velocity} > 1$, such that
\eqref{eqn.temp-discr:domain-inclusion} is fulfilled.

\begin{remark}\label{remark-to-delta-assumption}
Let us comment upon, why we consider \cref{assumption.discrete-velocity}
reasonable. The discretisations of the time derivative for both the bulk and
interface velocities is a standard first-order finite difference
approximation. For sufficiently small $\dt$, we can reasonably expect that
the discrete approximation is close to the real value, so that
\eqref{eqn.assumption.discrete-velocity} can be fulfilled.
\end{remark}

Let us briefly discuss the solvability of the system 
\eqref{eqn.temp-semi:weak-form}. To this end, we introduce an iteration in
$\ubnl,\lambnl,\Ubnl,\Cnl$. Let $\ubn_0,\lambn_0,\Ubn_0,\Cn_0=\ub^{n-1},
\lamb^{n-1}, \Ub^{n-1},\C^{n-1}$, and $\O_l,\G_l$ denote the domains resulting
from the position $\Cnl$. For
$l=1,\dots$, solve 
\begin{subequations}
\begin{align}
  \frac{1}{\dt}\lprod{\ubnl}{\vb}{\O_{l-1}} 
    + \lprod{\nabla\ubnl}{\nabla\vb}{\O_{l-1}}
    + \lprod{&\lambnl}{\vb}{\G_{l-1}} + \lprod{\mub}{\ubnl}{\G_{l-1}}
    \label{eqn.itteration1}\\
  &= \lprod{\mub}{\Ubn_{l-1}}{\G_{l-1}}
    + \frac{1}{\dt}\lprod{\Ex\ub^{n-1}}{\vb}{\O_{l-1}}\nonumber\\
  \Ubnl &= \Ub^{n-1} + \dt\left(\int_{\G_{l-1}}-\partial_{\nb}\ubnl\dif s + \gb \right)
  \label{eqn.itteration2}\\
  \Cnl &= \C^{n-1} + \dt\Ubnl\label{eqn.itteration3}
\end{align}
\end{subequations}
Therefore, the system \eqref{eqn.temp-semi:weak-form}  has a solution if the
mapping $g: (\Ubnl,\Cnl)\mapsto(\Ubn_{l+1}, \Cn_{l+1})$ has a fixed point. To
this end, we observe that
\begin{equation*}
  \nrm{\Cnl - \Cn_k}{2} = \dt \nrm{\Ubnl - \Ubn_k}{2}
   = \dt^2 \Big\Vert\int_{\G_{l-1}}-\partial_{\nb}\ubnl\dif s 
                - \int_{\G_{k-1}}-\partial_{\nb}\ubn_k\dif s\Big\Vert_2.
\end{equation*}
As a result, we have that $g$ is a contraction for sufficiently small $\dt$, if the bulk solution and consequently the drag is Lipschitz continuous with respect to the interface velocity and position. This can be achieved easily by rewriting \eqref{eqn.itteration1} in a reference domain, using a smooth mapping which is a small distortion of the identity. As the solution of \eqref{eqn.itteration1} is bounded by the data, for $\Ubn_{l-1},\Cn_{l-1}$ from a bounded ball around $\Ub^{n-1},\C^{n-1}$, we have that the drag can be bounded by a uniform constant. Then for $\dt$ sufficiently small, it follows that $\Ubnl,\Cnl$ are also contained in this ball. Consequently, the Banach fixed-point theorem gives that g has a unique fixed point, so \eqref{eqn.temp-semi:weak-form} admits a unique solution.

\subsection{Stability Analysis of the Semi-Discrete Scheme}
\label{sec.temp-discr:subsec.stability}

We show that the temporal semi-discretisation \eqref{eqn.temp-semi:weak-form}
results in a stable solution.

\begin{lemma}
\label{lemma.temp-semi:stabil}
Let $\{\ub^m, \Ub^m, \C^m\}_{m=1}^N$ be the velocity and position solution to
\eqref{eqn.temp-semi:weak-form} with compatible initial data $\O(0)$ and 
$(\ub^0,\Ub^0)\in\Vb^0\times\R^d$, and the time step $\dt$ be sufficiently
small. If \Cref{assumption.discrete-velocity} holds, we have for $m=1,\dots,N$  
the stability estimate
\begin{multline*}
  \nrm{\ub^m}{\O^m}^2 + \nrm{\Ub^m}{2}^2
    + \dt\sum_{n=1}^{m}\frac{1}{2}\nrm{\nabla\ub^i}{\On}^2\\
  \leq \exp\left(t^m\frac{\clem[a]{lemma.temp-semi:stabil}}{
                          1-\dt\clem[a]{lemma.temp-semi:stabil}}\right)
    \left[
    \nrm{\ub^0}{\O^0}^2 + \nrm{\Ub^0}{2}^2 +
    \frac{\dt}{2}\nrm{\nabla\ub^0}{\O^0} +
    \clem[b]{lemma.temp-semi:stabil}\nrm{\gb}{2}^2t^m
    \right],
\end{multline*}
with constants $\clem[a]{lemma.temp-semi:stabil},
\clem[b]{lemma.temp-semi:stabil}>0$ independent of the time step and
the number of steps $n$.
\end{lemma}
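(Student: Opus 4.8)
The plan is to transfer the continuous energy argument of \autoref{lemma.cont-stability} to the discrete level by testing the semi-discrete weak form \eqref{eqn.temp-semi:weak-form} with $(\vb,\mub,\Vbb)=(\ubn,-\lambn,\Ubn)$. As in the continuous case, this choice produces the two structural cancellations. For the interface, the choice $\mub=-\lambn$ collapses $\lprod{\lambn}{\ubn}{\Gn}+\lprod{-\lambn}{\ubn-\Ubn}{\Gn}$ to $\lprod{\lambn}{\Ubn}{\Gn}$; since the discrete multiplier again satisfies $\lambn=-\partial_\nb\ubn$ and $\Ubn$ is constant in space, this equals $\big(-\int_{\Gn}\partial_\nb\ubn\dif s\big)\cdot\Ubn=\lprod{\Fb}{\Ubn}{2}$ and cancels the force term on the right-hand side, exactly as $\TT_2$ did. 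The remaining forcing $\lprod{\gb}{\Ubn}{2}$ is treated verbatim as in \eqref{eqn.model:cont-est:T4a}: rewrite it as $\tfrac{1}{\vert\Gn\vert}\lprod{\gb}{\ubn}{\Gn}$ using $\ubn=\Ubn$ on $\Gn$, then apply the trace and Poincaré inequalities and a weighted Young inequality to absorb $\tfrac12\nrm{\nabla\ubn}{\On}^2$ into the diffusion term.

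For the two discrete time-derivative terms I would use the polarisation identity $2(a-b,a)=\lvert a\rvert^2-\lvert b\rvert^2+\lvert a-b\rvert^2$ and discard the nonnegative square. The ODE term gives the clean telescoping analogue $\tfrac{1}{\dt}\lprod{\Ubn-\Ub^{n-1}}{\Ubn}{2}\ge\tfrac{1}{2\dt}\big(\nrm{\Ubn}{2}^2-\nrm{\Ub^{n-1}}{2}^2\big)$ of $\TT_3$, while the bulk term gives $\tfrac{1}{\dt}\lprod{\ubn-\Ex\ub^{n-1}}{\ubn}{\On}\ge\tfrac{1}{2\dt}\big(\nrm{\ubn}{\On}^2-\nrm{\Ex\ub^{n-1}}{\On}^2\big)$. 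This last inequality replaces the Reynolds-transport computation of $\TT_1$: the moving domain is now accounted for through the extension operator rather than a transport theorem.

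The crux --- and the step I expect to be the main obstacle --- is relating $\nrm{\Ex\ub^{n-1}}{\On}^2$ back to $\nrm{\ub^{n-1}}{\O^{n-1}}^2$. Since $\Ex\ub^{n-1}=\ub^{n-1}$ on $\On\cap\O^{n-1}$, and the remainder $\On\setminus\O^{n-1}$ is contained, by \eqref{eqn.temp-discr:domain-inclusion}, in the strip $\Od{\O^{n-1}}\setminus\O^{n-1}$ of width $\delta=c_\delta\dt\wninf=O(\dt)$, the extension bounds \eqref{eqn.eulerian:ExtentionExtimate}--\eqref{eqn.eulerian:GradExtentionExtimate} of \autoref{assump.eulerian:extension} restricted to this thin strip must be shown to yield $\nrm{\Ex\ub^{n-1}}{\On}^2\le(1+c\dt)\nrm{\ub^{n-1}}{\O^{n-1}}^2+c\dt\nrm{\nabla\ub^{n-1}}{\O^{n-1}}^2$. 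The factor $1+c\dt$ is essential: with a fixed constant larger than one the recursion would, after $m=t^m/\dt$ steps, blow up as $\dt\to0$, whereas the $\dt$-proportional excess is exactly what a discrete Grönwall inequality converts into the prefactor $\exp(\clem[a]{lemma.temp-semi:stabil}t^m)$. Establishing this strip estimate with the correct $\dt$-scaling is the technical heart of the proof.

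Collecting these estimates into a one-step inequality, multiplying by $2\dt$ and summing over $n=1,\dots,m$ then telescopes the velocity and ODE contributions and accumulates the diffusion sum $\dt\sum_{n=1}^m\tfrac12\nrm{\nabla\ubn}{\On}^2$ on the left; the $c\dt\nrm{\nabla\ub^{n-1}}{\O^{n-1}}^2$ excess from the strip estimate is reabsorbed into this sum after an index shift (for $\dt$ small enough that $c\le\tfrac12$), leaving the boundary term $\tfrac{\dt}{2}\nrm{\nabla\ub^0}{\O^0}^2$ on the right. The residual $c\dt\nrm{\ub^{n-1}}{\O^{n-1}}^2$ terms feed the discrete Grönwall lemma to produce $\exp(\clem[a]{lemma.temp-semi:stabil}t^m)$, and the summed forcing contributes $\clem[b]{lemma.temp-semi:stabil}\nrm{\gb}{2}^2t^m$, which together give the asserted estimate.
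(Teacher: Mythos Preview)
Your proposal is correct and follows essentially the same route as the paper: test with $2\dt(\ubn,-\lambn,\Ubn)$, use the polarisation identity, exploit the cancellation $\lprod{\lambn}{\Ubn}{\Gn}=\lprod{\Fb}{\Ubn}{2}$, bound $\lprod{\gb}{\Ubn}{2}$ via trace/Poincar\'e/Young, invoke the strip estimate for the extension, sum and apply discrete Gronwall. The one point to correct is your absorption of the gradient term: the constant $c$ in your strip estimate does not depend on $\dt$, so the condition ``$\dt$ small enough that $c\le\tfrac12$'' is meaningless; the paper instead uses the strip estimate with a free parameter $\eps$, namely $\nrm{\Ex\ub}{\Od{\O}}^2 \le (1 + (1+\eps^{-1})\delta c')\nrm{\ub}{\O}^2 + \delta c''\eps\nrm{\nabla\ub}{\O}^2$ (cited from \cite[Lemma~3.5]{LO19}), and chooses $\eps=1/(2c''c_\delta\wninf)$ so that the gradient coefficient becomes exactly $\dt/2$, making the index-shift absorption work unconditionally.
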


\begin{proof}
We test \eqref{eqn.temp-semi:weak-form} with
$(\vb, \mub, \Vbb_1, \Vbb_2) = 2\dt(\ubn, -\lambn, \Ubn, \Cn)$ to obtain
\begin{multline}\label{eqn.temp-semi:lemma-stab.proof.1}
  2\lprod{\ubn-\Ex\ub^{i-1}}{\ubn}{\On} + 2 \lprod{\Ubn - \Ub^{n-1}}{\Ubn}{2}
    + 2 \lprod{\Cn - \C^{n-1}}{\Cn}{2}\\
    + 2\dt\nrm{\nabla\ubn}{\On} + 2\dt\lprod{\lambn}{\Ubn}{\Gn}
  = 2\dt\lprod{\gb+ \Fb^n}{\Ubn}{2} + 2 \dt \lprod{\Ubn}{\Cn}{2}.
\end{multline}
For the two terms originating from the approximation of the time-derivative, we
have the polarisation identity $ 2\lprod{\ubn-\Ex\ub^{n-1}}{\ubn}{\On} =
\nrm{\ubn}{\On}^2 + \nrm{\ubn-\Ex\ub^{n-1}}{\On}^2 -\nrm{\Ex\ub^{n-1}}{\On}^2.$
For the Lagrange-multiplier, external forcing and sold velocity-position 
coupling terms, we have as in the proof of \cref{lemma.cont-stability} above 
that
\begin{gather*}
  \lprod{\lambn}{\Ubn}{\Gn} = \lprod{\Fb^n}{\Ubn}{2}
  \intertext{and}
  \lprod{\gb}{\Ubn}{2} \leq \frac{c_1}{4}\nrm{\gb}{2}^2
    + \nrm{\nabla\ubn}{\On}^2,\qquad
  \lprod{\Ubn}{\Cn}{2} 
    \leq \frac{1}{2}\nrm{\Ubn}{2}^2 + \frac{1}{2}\nrm{\Cn}{2}^2,
\end{gather*}
with $c_{1}=c_\O^2c_P^2 / \vert\G\vert$. Using these equalities
and estimates, we get from \eqref{eqn.temp-semi:lemma-stab.proof.1} that
\begin{multline}\label{eqn.temp-semi:lemma-stab.proof.2}
  \nrm{\ubn}{\On}^2 + \nrm{\Ubn}{2}^2 + \nrm{\Cn}{2}^2 
    + \dt\nrm{\nabla\ubn}{\On}^2\\
    \leq \nrm{\Ex\ub^{n-1}}{\On}^2 + \nrm{\Ub^{n-1}}{2}^2 + \nrm{\C^{n-1}}{2}^2
      + \frac{c_{1}\dt}{2}\nrm{\gb}{2}^2 + \dt\nrm{\Ubn}{2}^2 + \dt\nrm{\Cn}{2}^2.
\end{multline}
Now, for arbitrary $\eps>0$, we have from~\cite[Lemma~3.5]{LO19} that
\begin{equation*}
  \nrm{\Ex\ub}{\Od{\O}} \leq (1 + (1 + \eps^{-1})\delta c')\nrm{\ub}{\O}^2
    + \delta c'' \eps \nrm{\nabla\ub}{\O}^2.
\end{equation*}
Then with $\delta$ as given in~\eqref{eqn.assumption.discrete-velocity} and $\eps= 1 / (2 c'' c_\delta \wninf)$, it
follows
\begin{align*}
  \nrm{\Ex\ub^{n-1}}{\On}^2
    &\leq \nrm{\Ex\ub^{n-1}}{\Od{\O^{n-1}}}^2\\
    &\leq \left(1 + \left(1 + 2c''c_\delta \wninf\right)
      c' c_\delta \wninf\dt\right)\nrm{\ub^{n-1}}{\O^{n-1}}^2
      + \frac{\dt}{2}\nrm{\nabla\ub^{n-1}}{\O^{n-1}}^2\\
    &\leq (1 + c_2\dt)\nrm{\ub^{n-1}}{\O^{n-1}}^2
     + \frac{\dt}{2}\nrm{\nabla\ub^{n-1}}{\O^{n-1}}^2.
\end{align*}
Applying this to \eqref{eqn.temp-semi:lemma-stab.proof.2} and summing this 
over $n=1,\dots,m$ leads to
\begin{multline*}
  \nrm{\ub^m}{\O^m}^2 + \nrm{\Ub^m}{2}^2 + \nrm{\C^m}{2}^2
    + \frac{\dt}{2}\sum_{n=1}^{m}\nrm{\nabla\ubn}{\On}^2
  \leq \nrm{\ub^0}{\O^0}^2 + \frac{\dt}{2}\nrm{\nabla\ub^0}{\O^0}^2
    + \nrm{\Ub^0}{2}^2\\
    + c_2\dt\sum_{n=0}^{m-1}\nrm{\ubn}{\On}^2
    + \dt\sum_{n=1}^m\left[\nrm{\Ubn}{2}^2  + \nrm{\Cn}{2}^2\right]
    + \frac{c_{1}}{2}t^m\nrm{\gb}{2}^2.
\end{multline*}
Applying a discrete version of Gronwall's lemma, see~\cite[Lemma~5.1]{HR90},
with the choices $\clem[a]{lemma.temp-semi:stabil}=\max\{c_2, 1\}$ and
$\clem[b]{lemma.temp-semi:stabil}=c_1 / 2$ then proves the claim.
\end{proof}

\section{Discretisation in Space and Time}
\label{sec.discrete}

We now come to the full discretisation of \eqref{eqn:system.strong-pde}
--\eqref{eqn:system.strong-ode}.
For the fully discrete method, we use an unfitted finite element method with
Lagrange-multipliers to implement the Dirichlet boundary condition on the
moving, unfitted interface. This will allow us to reuse aspects of the
semi-discrete analysis. This unfitted FEM has its origins in~\cite{BH10}.
The geometry is defined implicitly on a background mesh using a level set
function. So-called ``bad-cuts'' between the mesh and the boundary $\Gamma$
given by the level set function are stabilised using ghost penalty
stabilisation~\cite{Bur10}. The ghost penalty stabilisation is also
responsible for the implicit extension into a strip around the moving
interface, ensuring that the solution is well-defined on domains at subsequent
time steps. At each time step, the discrete domain $\Ohn$ is extended by a
strip of width
\begin{equation*}
  \delta_h \coloneqq c_{\delta_h}\wninf \dt
\end{equation*}
such that $\Oh^{n+1}$ is a subset of of the extended domain. We further assume
that $c_{\delta_h}>1$ is sufficiently small, such that
\begin{equation}\label{eqn.extension-domiain-inlclusion}
  \Odh{\Ohn}\subset\Od{\On}.
\end{equation}

\subsection{Spatial Discretisation Method}
\label{sec.discrete:subsec.method}

Let $\Tht$ be a simplicial, shape-regular and quasi-uniform mesh of the domain
$\Ot$, where $h>0$ is the characteristic size of the simplexes. We
collect the elements that are in the extended domain as the \emph{active mesh}
in
\begin{equation*}
  \Thnd \coloneqq \{T \in\Tht\;\vert\; \exists \xb\in T \text{ such that }
    \dist(\xb, \Ohn) \leq \delta_h \} \subset \Tht
\end{equation*}
and denote the \emph{active domain} as
\begin{equation*}
  \OdT{n} \coloneqq \{\xb\in T\;\vert\; T\in\Thnd \} \subset \R^d.
\end{equation*}
We further define the \emph{cut mesh} as $\Thn\coloneqq\mathcal{T}^{n}_{h,0}$
of all elements that contain some part of $\Ohn$ and the \emph{cut domain}
$\OTn\coloneqq\OO^n_{0,\mathcal{T}}$. Similarly, we collect the set of
\emph{interface elements} as
\begin{equation*}
  \ThGhn \coloneqq \{T\in\Tht\;\vert\; \meas_{d-1}(T\cap\Ghn) > 0 \}
\end{equation*}
and the domain of these elements as $\OGTn\coloneqq \{\xb\in T\;\vert\;
T\in\ThGhn\}.$ For the extension ghost penalty operators, we
collect the elements in the extension strip in
\begin{equation*}
  \ThnSpm \coloneqq \{T\in\Thnd \;\vert\; \exists\xb\in T \text{ such that }
    \dist(\xb,\Ghn) \leq \delta_h\}
\end{equation*}
and the set of interior facets of this strip in
\begin{equation*}
  \Fhnd \coloneqq \{ F=\overline{T}_1\cap\overline{T}_2 \;\vert\;
    T_1\in\Thnd, T_2\in\ThnSpm \text{ with }
    T_1\neq T_2\text{ and } \meas_{d-1}(F)>0\}.
\end{equation*}
Finally, for the analysis below, we also define the set of extension strip
elements 
\begin{equation*}
 \ThnSp \coloneqq \{T\in\Thnd \;\vert\; \exists\xb\in T\cap (\Ot\setminus\Ohn)
  \text{ such that } \dist(\xb,\Ghn) \leq \delta_h \}.
\end{equation*}
An illustration of these sets of elements and facets can be seen in
\cref{fig.discrete:mesh}.

\begin{figure}
  \centering
  \includegraphics[width=7.8cm]{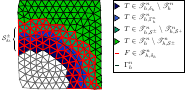}
  \caption{The different sets of element and facets needed for the discrete
    method.}
  \label{fig.discrete:mesh}
\end{figure}

\subsubsection{Finite Element Spaces}
\label{sec.discrete:subsec.method:subsubsec.fe-space}
On the active mesh, we consider for $k\geq 2$ the finite element
spaces for velocity and Lagrange-multipliers 
\begin{align*}
  \Vbhn &\coloneqq \{ \vbh \in \bm{C}(\OdT{n}) \;\vert\; \restr{\vbh}{T}\in
    \PP^k(T) \text{ for all } T\in\Thnd \},\\
  \Nbhn &\coloneqq \{ \mubh \in \bm{C}(\OGTn) \;\vert\; \restr{\mubh}{T}\in
    \PP^{k-1}(T) \text{ for all } T\in\ThGhn \}.
\end{align*}

\subsubsection{Variational Formulation}
\label{sec.discrete:subsec.method:subsubsec.form}

The fully discrete variational formulation of the method then reads as
follows: Given an appropriate and compatible set of initial conditions $\Oh^0$,
$\ub_0\in\Vbh^0$ and $\Ub_0\in\R^d$, for $n=1,\dots,N$ find $(\ubhn,\lambhn,
\Ubhn,\Chn)\in\Vbhn\times\Nbhn\times\R^d\times\R^d$ such that
\begin{multline}\label{eqn.discrete-form}
  \lprod{\frac{1}{\dt}(\ubhn - \ubh^{n-1})}{\vbh}{\Ohn} + a_h^n(\ubhn,\vbh)
  + b_h^n(\lambhn,\vbh) + b_h^n(\mubh, \ubhn-\Ubhn) \\
  + \gsu i_h^n(\ubhn,\vbh) - \gsl j_h^n(\lambhn,\mubh)
  + \lprod{\frac{1}{\dt}(\Ubhn - \Ubh^{n-1})}{\Vbb_1}{2}
  + \lprod{\frac{1}{\dt}(\Chn - \Ch^{n-1})}{\Vbb_2}{2}\\
  = \lprod{\Fbhn + \gb}{\Vbb_1}{2} + \lprod{\Ubhn}{\Vbb_2}{2}
\end{multline}
holds for all $(\vbh,\mubh,\Vbb_1,\Vbb_2)\in\Vbhn\times\Nbhn\times\R^d\times
\R^d$. The stabilisation parameters $\gsu,\gsl>0$ are to be specified later.
The bilinear forms $a_h^n(\cdot,\cdot)$ and $b_h^n(\cdot,\cdot)$ are defined by
\begin{equation*}
  a_h^n(\ubh,\vbh)\coloneqq \int_{\Ohn}\nabla\ubh\colon\nabla\vbh\dif \xb
  \qquad\text{and}\qquad
  b_h^n(\lambh,\vbh) \coloneqq \int_{\Ghn}\lambh\cdot\vbh\dif s,
\end{equation*}
respectively. To stabilise the system \eqref{eqn.discrete-form} with respect to
``bad-cuts'', we use the \emph{direct} version of the ghost penalty
stabilisation operator~\cite{Pre18}. To define this, let
$F=\overline{T}_1\cap\,\overline{T}_2$ be an interior facet and $\wF=T_1\cup T_2$
be the corresponding facet-patch. We then define $\jump{u}\coloneqq u_1 - u_2$
with $u_i = \Ex^\PP \restr{u}{T_i}$, where $\Ex^\PP\colon
\PP^m(T)\rightarrow\PP(\R^d)$ is the canonical extension of polynomials to
$\R^d$. The Laplace form is then stabilised with the ghost penalty form
\begin{equation*}
  i_h^n(\ubh,\vbh) \coloneqq \frac{1}{h^2}\!\sum_{F\in\Fhnd}
    \int_{\wF}\jump{\ubh}\cdot\jump{\vbh}\dif\xb.
\end{equation*}
Furthermore, the Lagrange-multiplier is stabilised with
\begin{equation*}
  j_h^n(\lambh,\mubh) \coloneqq 
    h^2 \int_{\OGTn}(\nb\cdot\nabla\lambh)\cdot(\nb\cdot\nabla\mubh)\dif\xb.
\end{equation*}
Here we extend the interface unit normal vector into a field in the bulk domain
by $\nb = - \nabla\phi / \nrm{\nabla\phi}{2}$. On the discrete level, we define
the force via the discrete Lagrange multiplier, i.e,
\begin{equation}\label{eqn.discrete-force}
  \Fbhn \coloneqq \int_{\Ghn}\lambhn \dif s.
\end{equation}

\subsubsection{Extension and Stabilisation through Ghost Penalties}
\label{sec.discrete:subsec.method:subsubsec.ext}

For the ghost penalty mechanism, we require the following assumption,
see also~\cite[Assumption~5.3]{LO19} and~\cite[subsection~5.1.3]{vWRL20}.

\begin{assumption}\label{assumption.ghost-penalty:strip-width}
Our analysis requires the following further assumptions on the mesh and level 
set:
\begin{enumerate}[label= - \ref*{assumption.ghost-penalty:strip-width}.\alph*]
  \item
For every element cut by $\Ghn$,
the interface $\Ghn$ intersects the element boundary $\partial T$ exactly
twice and each (open) edge exactly once.
  \item 
For each element $T$ intersected by $\Ghn$, there exists a plane $S_T$ and a
piecewise smooth parametrisation $\varphi: S_T\cap T \rightarrow \Ghn\cap T$.
  \item
We assume that for every strip element $T\in\ThnSp$ there exists an uncut
element $T' \in \Thn \setminus \ThnSp$, which can be reached by a path which
crosses a bounded number of facets $F\in\Fhnd$. We assume that the number of
facets which have to be crossed to reach $T'$ from $T$ is bounded by a constant
$K \leqc (1 + \frac{\delta_h}{h})$ and that every uncut element $T' \in \Thn
\setminus \ThnSp$ is the end of at most $M$ such paths, with $M$ bounded
independent of $\dt$ and $h$. In other words, each uncut elements "supports" at
most $M$ strip elements.
\end{enumerate}
\end{assumption}

Since the curvature of $\G$ is bounded (and remains constant in time), the 
above assumption is reasonable, if the interface is sufficiently well resolved.
\ref*{assumption.ghost-penalty:strip-width}.a and
\ref*{assumption.ghost-penalty:strip-width}.b are necessary for a
trace estimate from the interface to the entire cut element, see~\cite{HH02}.
These assumptions therefore are standard for the analysis of CutFEM methods. 
Furthermore, \ref*{assumption.ghost-penalty:strip-width}.c is standard for
unfitted moving domain discretisations. See~\cite{LO19} for a detailed
justification thereof.

We summarise the essential stabilising property of the ghost penalty operator.

\begin{lemma}\label{lemma.ghost-penalty}
With the direct ghost penalty operators, we have for all $\vbh\in\Vbhn$ that
\begin{equation*}
  \nrm{\nabla\vbh}{\OdT{n}}^2
    \eqc \nrm{\nabla\vbh}{\Ohn}^2 + K i_h^n(\vbh,\vbh).
\end{equation*}
\end{lemma}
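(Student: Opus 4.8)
The plan is to prove the two-sided estimate separately, with $\nrm{\nabla\vbh}{\Ohn}^2 + K\, i_h^n(\vbh,\vbh) \leqc \nrm{\nabla\vbh}{\OdT{n}}^2$ being the routine direction and the reverse \emph{extension} bound $\nrm{\nabla\vbh}{\OdT{n}}^2 \leqc \nrm{\nabla\vbh}{\Ohn}^2 + K\, i_h^n(\vbh,\vbh)$ carrying all the difficulty. Both reduce to a single-facet transfer estimate, which I would establish first: for a facet $F\in\Fhnd$ with patch $\wF = T_1\cup T_2$, writing $\Ex^\PP\vbh|_{T_1} = \Ex^\PP\vbh|_{T_2} + \jump{\vbh}$ as polynomials on $\R^d$ and combining shape-regular polynomial norm equivalence between $T_1$ and $T_2$ with an inverse estimate yields $\nrm{\nabla\vbh}{T_1}^2 \leqc \nrm{\nabla\vbh}{T_2}^2 + h^{-2}\nrm{\jump{\vbh}}{\wF}^2$. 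Throughout I keep in mind that, by \ref*{assumption.ghost-penalty:strip-width}.c, the path length $K\leqc 1 + \delta_h/h$ and the overlap count $M$ are uniformly bounded, so all accumulated constants remain independent of $h$ and $\dt$.

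For the easy direction, $\Ohn\subset\OdT{n}$ gives $\nrm{\nabla\vbh}{\Ohn}^2 \leq \nrm{\nabla\vbh}{\OdT{n}}^2$ at once, while applying the transfer estimate once per facet, together with the bounded overlap of the patches $\{\wF\}_{F\in\Fhnd}$, yields $i_h^n(\vbh,\vbh)\leqc\nrm{\nabla\vbh}{\OdT{n}}^2$. Multiplying by the uniformly bounded factor $K$ keeps the right-hand side controlled by $\nrm{\nabla\vbh}{\OdT{n}}^2$.

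For the extension direction, I would split $\nrm{\nabla\vbh}{\OdT{n}}^2 = \sum_{T\in\Thnd}\nrm{\nabla\vbh}{T}^2$ into the uncut interior elements $T'\in\Thn\setminus\ThnSp$, which satisfy $T'\subset\Ohn$ and hence sum to at most $\nrm{\nabla\vbh}{\Ohn}^2$, and the strip elements $T\in\ThnSp$. For each strip element I invoke \ref*{assumption.ghost-penalty:strip-width}.c to select a path of at most $K$ facets in $\Fhnd$ joining $T$ to an uncut element $T'$, and telescope the transfer estimate along this path to obtain $\nrm{\nabla\vbh}{T}^2\leqc \nrm{\nabla\vbh}{T'}^2 + K\sum_{F}h^{-2}\nrm{\jump{\vbh}}{\wF}^2$, the sum running over the facets of the path. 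Summing over all strip elements and using that each uncut element is the endpoint of at most $M$ paths while each facet lies on boundedly many paths, the target terms collect to $M\,\nrm{\nabla\vbh}{\Ohn}^2$ and the jump terms to $K\, i_h^n(\vbh,\vbh)$, giving the claim.

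The hard part will be the telescoping step along the paths: naive iteration of the single-facet estimate produces a constant growing like $C^K$, and the polynomial norm equivalence relating $\Ex^\PP\vbh|_{T'}$ evaluated on the distant element $T$ degrades with path length. The point of \ref*{assumption.ghost-penalty:strip-width}.c is precisely to keep $K\leqc 1+\delta_h/h$ and the charge counts $M$ bounded so that the accumulated constant stays independent of $h$ and $\dt$; the careful bookkeeping ensuring that each facet and each uncut endpoint is charged only boundedly often is what prevents the summation from destroying the uniform equivalence.
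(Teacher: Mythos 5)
You should first note that the paper itself offers no proof of this lemma at all --- it is cited verbatim from Lemma~5.5 of \cite{LO19} --- so a self-contained argument is welcome in principle; but yours has a fatal flaw: in both directions it rests on treating $K$ as a uniformly bounded constant, and it is not. Assumption \ref{assumption.ghost-penalty:strip-width}.c only gives $K \leqc 1 + \delta_h/h$ with $\delta_h = c_{\delta_h}\wninf\dt$, so $K$ scales like $1 + \dt/h$ and is unbounded in the anisotropic regime $\dt \gg h$, which the paper explicitly allows. This is precisely why $K$ is carried as an explicit factor in the statement of \autoref{lemma.ghost-penalty} and why the stabilisation parameter is chosen as $\gsu = \gamma_s K$ in \eqref{eqn.ghost-penalty-parameter}, ``to compensate the weakening of the stabilisation for larger extension strips''; if $K$ were a uniform constant, the lemma would not mention it. Concretely, this breaks your ``easy'' direction: the per-facet argument does give $i_h^n(\vbh,\vbh) \leqc \nrm{\nabla\vbh}{\OdT{n}}^2$, but multiplying by an unbounded $K$ is not harmless --- in fact $K\, i_h^n(\vbh,\vbh) \leqc \nrm{\nabla\vbh}{\OdT{n}}^2$ is \emph{false} for large $K$: take $\vbh$ to be a bubble supported on a single element of $\ThnSp$ lying outside $\Ohn$ (such elements exist exactly when $\delta_h \gg h$, i.e.\ when $K$ is large); then $\nrm{\nabla\vbh}{\Ohn} = 0$ while $i_h^n(\vbh,\vbh) \eqc \nrm{\nabla\vbh}{\OdT{n}}^2$, so the $K$-weighted side exceeds the left-hand side by a factor $K$. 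The statement must therefore be read as the pair of one-sided bounds
\begin{equation*}
  \nrm{\nabla\vbh}{\Ohn}^2 + i_h^n(\vbh,\vbh) \leqc \nrm{\nabla\vbh}{\OdT{n}}^2
  \qquad\text{and}\qquad
  \nrm{\nabla\vbh}{\OdT{n}}^2 \leqc \nrm{\nabla\vbh}{\Ohn}^2 + K\, i_h^n(\vbh,\vbh),
\end{equation*}
which is the form established in \cite{LO19}, and only the second bound is ever used in the paper's stability and error analysis.

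The same misreading voids your ``hard'' (extension) direction. You correctly identify the central difficulty --- naive iteration of the facet transfer estimate along a path of $m\leq K$ facets produces a constant $C^{m}$, and polynomial norm equivalence between distant elements degrades with the distance --- but your proposed resolution, that \ref{assumption.ghost-penalty:strip-width}.c keeps $K$ bounded, is simply not available: the assumption bounds $K$ by $1+\delta_h/h$, which is itself a quantity that grows with $\dt/h$. The whole content of the lemma is that the price of crossing an extension strip of $O(\dt/h)$ element layers enters only \emph{linearly} in $K$, uniformly in $h$ and $\dt$, and this cannot be obtained by iterating the one-step estimate (whose norm-equivalence constant exceeds one, hence the exponential growth you noticed). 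The mechanism that produces the linear factor is different: one telescopes the exact polynomial identity $\Ex^\PP\restr{\vbh}{T} = \Ex^\PP\restr{\vbh}{T'} + \sum_{j=1}^{m}\jump{\vbh}_{F_j}$, valid because consecutive canonical extensions differ exactly by the direct jumps, restricts it to $T$, and applies the Cauchy--Schwarz inequality to the sum over the path, so that the path length appears once as the factor $m\leq K$ in front of the jump terms; the remaining work is to control the restrictions to $T$ of the far polynomial and of the jump polynomials without letting constants accumulate along the path. Your outline stops exactly at this step, which is where the proof of \cite[Lemma~5.5]{LO19} does its real work, so the core of the lemma remains unproven in your proposal.
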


See~\cite[Lemma~5.5]{LO19} for the proof thereof.

As seen in \cref{lemma.ghost-penalty}, the stiffness between the velocity
unknown on elements $T\in\ThnSpm\setminus\ThnSp$ and $T'\in\ThnSp$, induced by
the stabilising ghost penalty operator, depends on the inverse distance between
$T$ and $T'$ as measured in the number of elements that need to be crossed to
reach $T$ from $T'$. This in turn depends on the anisotropy between the spatial 
and the temporal discretisation, with $K \leqc (1 + \frac{\delta_h}{h})$ and
$\delta_h\leqc \dt$. In the stability analysis below, we shall require that
$\gsu\geqc K$, to compensate the weakening of the stabilisation for larger
extension strips. As a result, we choose the ghost penalty stabilisation
parameters as
\begin{align}\label{eqn.ghost-penalty-parameter}
  && \gsu = \gsu(h,\delta_h) = \gamma_s K
  \quad\text{with $\gamma_s>0$ independent of $h$ and $\dt$.}
\end{align}
See also \cite[section~4.4]{LO19}.

\subsection{Stability Analysis}
\label{sec.discrete:subsec.stability}

For our analysis, we consider the following mesh-dependent norms
\begin{align*}
  \tnrms{\ubh}^2 &\coloneqq
    \nrm{\nabla\ubh}{\OdT{n}}^2+\nrm{h^{-1/2}\ubh}{\Ghn}^2\\
  \tnrms{\lambh}^2 &\coloneqq
    \nrm{h^{1/2}\lambh}{\Ghn}^2 + \nrm{h\nb\cdot\nabla\lambh}{\OGTn}^2.
\end{align*}
Note that these norms are independent of the mesh-interface cut topology, and
since they are defined on the entire finite element spaces, they represent
proper norms on the spaces $\Vbhn$ and $\Nbhn$, respectively. On the product
space, we then take the norm
\begin{equation*}
\tnrms{(\ubh,\lambh)}^2\coloneqq \tnrms{\ubh}^2 + \tnrms{\lambh}^2.
\end{equation*}
We do not distinguish between the norms on the different spaces, since the
argument makes it clear which norm is meant.

In conjunction with the stability form, we then have the following lemma
\begin{lemma}\label{lemma.lagrange-stability.norm}
For sufficiently small $h>0$, it holds that
\begin{align*}
  && \nrm{\lambh}{\OGTn}^2 &\leqc
    \nrm{h^{1/2}\lambh}{\Ghn}^2 + \nrm{h\nb\cdot\nabla\lambh}{\OGTn}^2
  &&\text{for all }\lambh\in\Nbhn.
\end{align*}
\end{lemma}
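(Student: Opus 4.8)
The plan is to prove the estimate by reducing it to a local lifting bound on the band of interface elements and then reading off the global norms. Writing $\nrm{h^{1/2}\lambh}{\Ghn}^2 = h\nrm{\lambh}{\Ghn}^2$ and $\nrm{h\nb\cdot\nabla\lambh}{\OGTn}^2 = h^2\nrm{\nb\cdot\nabla\lambh}{\OGTn}^2$, it suffices to show that $\nrm{\lambh}{\OGTn}^2 \leqc h\nrm{\lambh}{\Ghn}^2 + h^2\nrm{\nb\cdot\nabla\lambh}{\OGTn}^2$, where the constant must be uniform in the mesh and in the position of the cut. One may equally argue element by element using the graph structure of the interface and sum over $T\in\ThGhn$; I will phrase the core argument directly on the $O(h)$-band $\OGTn$, since every point of $\OGTn$ lies within $\leqc h$ of $\Ghn$.

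For the lifting I would use a fundamental-theorem-of-calculus argument in normal coordinates. Since $\G$ has bounded curvature and $h$ is small, every $\xb\in\OGTn$ has a well-defined closest-point projection $p(\xb)\in\Ghn$, and the map $\xb\mapsto(p(\xb),r(\xb))$, with $r$ the signed distance, is a bi-Lipschitz change of variables whose Jacobian is $1+O(h)$; the non-degeneracy and graph structure required here are exactly parts a and b of \autoref{assumption.ghost-penalty:strip-width}. Writing $\lambh(\xb)=\lambh(p(\xb))+\int_0^{r(\xb)}\partial_\rho\lambh\,d\rho$ along the normal ray and using $|r(\xb)|\leqc h$, the Cauchy--Schwarz inequality gives $|\lambh(\xb)|^2\leqc |\lambh(p(\xb))|^2 + h\int_{\mathrm{ray}}|\partial_\rho\lambh|^2$. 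Integrating in the normal coordinates, the first term contributes the interface trace times the $O(h)$ band width, i.e. $h\nrm{\lambh}{\Ghn}^2$, while the second contributes $h\cdot h\,\nrm{\partial_\rho\lambh}{\OGTn}^2 = h^2\nrm{\partial_\rho\lambh}{\OGTn}^2$; here the $C^1$-regularity built into $\Nbhn$ is what allows the ray to cross element interfaces so that $\partial_\rho\lambh$ is integrated as a genuine derivative.

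The ray direction $\partial_\rho$ is $\nabla r$, which agrees with $\nb=-\nabla\phi/\nrm{\nabla\phi}{2}$ on $\Ghn$ and, by smoothness of $\phi$ and bounded curvature, differs from it by $O(h)$ throughout the band. Hence $\partial_\rho\lambh=\nb\cdot\nabla\lambh+(\nabla r-\nb)\cdot\nabla\lambh$ with $|(\nabla r-\nb)\cdot\nabla\lambh|\leqc h|\nabla\lambh|$, so the second contribution is bounded by $h^2\nrm{\nb\cdot\nabla\lambh}{\OGTn}^2 + h^4\nrm{\nabla\lambh}{\OGTn}^2$. Applying the inverse inequality $\nrm{\nabla\lambh}{\OGTn}\leqc h^{-1}\nrm{\lambh}{\OGTn}$ turns the last term into $h^2\nrm{\lambh}{\OGTn}^2$, which for $h$ sufficiently small is absorbed into the left-hand side; this is precisely the role of the smallness hypothesis on $h$.

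The main obstacle is the uniformity of the geometric constants with respect to the arbitrary position of the cut: the Jacobian bounds on the normal-coordinate map, the fact that the normal rays of length $O(h)$ stay inside $\OGTn$, and the comparison of the interface surface measure with the base measure must all hold with constants independent of how $\Ghn$ slices the elements. This is guaranteed by the shape-regularity of $\Tht$ together with \autoref{assumption.ghost-penalty:strip-width}, which excludes the degenerate cut configurations. A reference-element check confirms that the lifting constant does not blow up even as the cut piece $\Ghn\cap T$ shrinks to a point, since the polynomial then carries a correspondingly large normal derivative that controls its volume norm.
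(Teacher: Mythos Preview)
The paper does not give its own proof of this lemma but defers entirely to \cite[Sec.~7]{GLR18}. Your approach---the fundamental-theorem-of-calculus lifting along normal rays in a tubular-neighbourhood coordinate system, followed by absorption of the $O(h)$ discrepancy between $\nabla r$ and the extended normal $\nb$ via an inverse estimate---is precisely the standard argument used in that reference for this class of normal-derivative volume stabilisations, so your proposal is correct and aligned with the cited proof. The one point worth tightening is the claim that normal rays from $\xb\in\OGTn$ to $p(\xb)\in\Ghn$ remain inside $\OGTn$; this is not automatic element-by-element when the foot point lands in a neighbouring element, but it does hold globally for sufficiently small $h$ because the segment has length $\leqc h$ and stays within distance $\leqc h$ of $\Ghn$, hence in the union of interface elements by shape-regularity---which is exactly the smallness hypothesis you invoke.
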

See \cite[Sec. 7]{GLR18} for the details of the proof thereof.

\begin{lemma}
The stabilised Laplace operator $(a_h^n + \gsu i_h^n)(\cdot,\cdot)$ is
continuous and coercive on $\Vbhn$, i.e,
\begin{align*}
  && (a_h^n + \gsu i_h^n)(\ubh,\vbh) &\leqc \tnrms{\ubh}\tnrms{\vbh}
    &&\text{for all } \ubh,\vbh\in\Vbhn\\
  && (a_h^n + \gsu i_h^n)(\ubh,\ubh) &\geqc \tnrms{\ubh}^2
    &&\text{for all } \ubh\in\Vbhn.
\end{align*}
\end{lemma}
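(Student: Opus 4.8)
The plan is to prove the two estimates separately, letting the ghost-penalty norm equivalence of \autoref{lemma.ghost-penalty} carry the bulk gradient and reserving a trace argument for the interface contribution of $\tnrms{\cdot}$. Throughout I would write the form as $(a_h^n+\gsu i_h^n)(\ubh,\vbh)=\int_{\Ohn}\nabla\ubh\colon\nabla\vbh\dif\xb+\gsu i_h^n(\ubh,\vbh)$ and use the parameter choice $\gsu=\gamma_s K$ from \eqref{eqn.ghost-penalty-parameter}, since it is exactly this scaling that keeps the constants independent of $h$ and $\dt$.

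For continuity I would bound the Laplace part by a Cauchy--Schwarz inequality together with the trivial inclusion $\nrm{\nabla\ubh}{\Ohn}\leq\nrm{\nabla\ubh}{\OdT{n}}$, which already gives $\nrm{\nabla\ubh}{\OdT{n}}\nrm{\nabla\vbh}{\OdT{n}}\leq\tnrms{\ubh}\tnrms{\vbh}$. For the ghost-penalty part I would apply Cauchy--Schwarz in the symmetric positive semi-definite form $i_h^n$ itself and then use the $\leqc$ direction of \autoref{lemma.ghost-penalty}, namely $K\,i_h^n(\wb,\wb)\leqc\nrm{\nabla\wb}{\OdT{n}}^2$, to turn both factors $\big(K\,i_h^n(\ubh,\ubh)\big)^{1/2}$ and $\big(K\,i_h^n(\vbh,\vbh)\big)^{1/2}$ into $\nrm{\nabla\cdot}{\OdT{n}}$. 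Because the interface term in $\tnrms{\cdot}$ only enlarges the right-hand side, continuity with respect to the full triple norm follows at once.

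For coercivity, testing against $\ubh$ gives $(a_h^n+\gsu i_h^n)(\ubh,\ubh)=\nrm{\nabla\ubh}{\Ohn}^2+\gamma_s K\,i_h^n(\ubh,\ubh)$. Up to the constant $\min\{1,\gamma_s\}$ this dominates $\nrm{\nabla\ubh}{\Ohn}^2+K\,i_h^n(\ubh,\ubh)$, and the $\geqc$ direction of \autoref{lemma.ghost-penalty} immediately yields control of the extended gradient $\nrm{\nabla\ubh}{\OdT{n}}^2$; this is precisely the role of choosing $\gsu$ proportional to $K$, which compensates the $K$-weakening of the stabilisation on wide strips. It then remains to bound the interface part $\nrm{h^{-1/2}\ubh}{\Ghn}^2$ by the same quantity, for which I would invoke a discrete trace inequality on the cut interface elements---licensed by \autoref{assumption.ghost-penalty:strip-width}---to pass from $\Ghn$ into the surrounding band $\OGTn\subset\OdT{n}$, followed by a Poincaré--Friedrichs estimate exploiting the homogeneous datum $\restr{\ubh}{\Gout}=0$.

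I expect this last step to be the main obstacle, and the delicate issue is the bookkeeping of powers of $h$. The discrete trace inequality produces $h^{-1}\nrm{\ubh}{\OGTn}^2$, which the $h^{-1/2}$ weight in the norm converts into an $h^{-2}$-weighted $L^2$ mass term; a global Poincaré inequality only trades $\nrm{\ubh}{\OGTn}^2$ for $\nrm{\nabla\ubh}{\OdT{n}}^2$ and therefore cannot absorb the surplus factor $h^{-2}$. Closing the interface estimate thus needs extra structure beyond the ghost-penalty machinery---most plausibly a localisation of the Poincaré argument to the interface band, or carrying the interface trace through the saddle-point coercivity associated with the constraint $b_h^n(\cdot,\cdot)$ rather than arguing on all of $\Vbhn$---and this is where the technical heart of the proof lies, whereas the gradient coercivity is an essentially immediate consequence of \autoref{lemma.ghost-penalty}.
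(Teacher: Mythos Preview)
The paper does not give an in-text proof of this lemma; it only cites \cite[Lemmas~6 and~7]{BH12}. Your argument for continuity is correct and is what any direct proof would do. For coercivity, your reduction to \autoref{lemma.ghost-penalty} (using the scaling $\gsu=\gamma_s K$) cleanly delivers the extended-gradient part $\nrm{\nabla\ubh}{\OdT{n}}^2$, and you have put your finger precisely on the remaining difficulty with the interface term.

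That difficulty is real and is not a defect of your approach. The cited reference \cite{BH12} treats a \emph{Nitsche} formulation whose bilinear form contains the penalty $\gamma h^{-1}\!\int_{\Ghn}\ubh\cdot\vbh\,\dif s$, and it is that term which supplies control of $\nrm{h^{-1/2}\ubh}{\Ghn}^2$ there. The form $(a_h^n+\gsu i_h^n)$ here has no such boundary term, and your scaling count is correct: for a fixed smooth discrete $\ubh$ with $\ubh\approx 1$ near $\Ghn$ and $\ubh=0$ on $\Gout$ one has $(a_h^n+\gsu i_h^n)(\ubh,\ubh)=\OO(1)$ while $\tnrms{\ubh}^2\geq h^{-1}\nrm{\ubh}{\Ghn}^2\geqc h^{-1}$, so uniform coercivity with respect to the full $\tnrms{\cdot}$ cannot hold. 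Neither a localised Poincar\'e argument nor the saddle-point structure rescues this at the level of $a_h^n+\gsu i_h^n$ alone: the missing $h^{-1}$ boundary control has to come from an explicit interface term in the form. What is both true and actually used downstream is coercivity with respect to $\nrm{\nabla\cdot}{\OdT{n}}^2$ (equivalently $\nrm{\nabla\cdot}{\Ohn}^2+K i_h^n(\cdot,\cdot)$), which your argument already establishes; the interface contribution to $\tnrms{\cdot}$ enters later through the Lagrange-multiplier coupling in \autoref{lemma.stationary-lagrange.solvability}.
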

See~\cite[Lemma~6 and Lemma~7]{BH12} for a proof thereof.

\begin{lemma}
\label{lemma.bad-inf-sup}
For the discrete forms $b_h^n(\cdot,\cdot)$ and $j_h^n(\cdot,\cdot)$ in the
finite element method \eqref{eqn.discrete-form}, we have the stability estimate
\begin{align*}
  && \beta \tnrms{\lambh}
  &\leq \sup_{\vbh\in\Vbhn}\frac{b_h^n(\lambh,\vbh)}{\tnrms{\vbh}}
    + j_h^n(\lambh,\lambh)^\frac{1}{2} && \text{for all }\lambh\in\Nbhn,
\end{align*}
with the constant $\beta>0$ independent of $h$.
\end{lemma}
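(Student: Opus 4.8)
The strategy is a Fortin-type argument: for a fixed $\lambh\in\Nbhn$ I exhibit one good test velocity $\vbh\in\Vbhn$ and estimate the quotient $b_h^n(\lambh,\vbh)/\tnrms{\vbh}$ from below. The natural candidate is a finite element velocity whose trace on the discrete interface equals $h\lambh$. Since $\PP^{k-1}\subset\PP^k$ and $\lambh$ is a piecewise polynomial of degree $k-1$ on the interface elements $\ThGhn$, such a $\vbh$ can be built inside the space $\Vbhn$. For this choice the bilinear form reduces to $b_h^n(\lambh,\vbh)=\int_{\Ghn}\lambh\cdot(h\lambh)\dif s=\nrm{h^{1/2}\lambh}{\Ghn}^2$, which is already the quantity we wish to control in the multiplier norm.

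It remains to bound the denominator $\tnrms{\vbh}^2=\nrm{\nabla\vbh}{\OdT{n}}^2+\nrm{h^{-1/2}\vbh}{\Ghn}^2$. The interface part is exact, $\nrm{h^{-1/2}\vbh}{\Ghn}^2=\nrm{h^{1/2}\lambh}{\Ghn}^2$. For the gradient part I would keep $\vbh$ supported in a fixed neighbourhood of the interface elements and apply the inverse inequality $\nrm{\nabla\vbh}{\OGTn}\leqc h^{-1}\nrm{\vbh}{\OGTn}$ together with shape-regularity and norm-equivalence on the finite-dimensional element spaces, which yields $\nrm{\nabla\vbh}{\OdT{n}}^2\leqc h^{-2}\nrm{h\lambh}{\OGTn}^2=\nrm{\lambh}{\OGTn}^2$. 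Invoking \autoref{lemma.lagrange-stability.norm} then gives $\nrm{\lambh}{\OGTn}^2\leqc\nrm{h^{1/2}\lambh}{\Ghn}^2+\nrm{h\nb\cdot\nabla\lambh}{\OGTn}^2$, so that altogether $\tnrms{\vbh}^2\leqc\tnrms{\lambh}^2$.

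Combining both bounds and writing $A\coloneqq\nrm{h^{1/2}\lambh}{\Ghn}$ and $B\coloneqq\nrm{h\nb\cdot\nabla\lambh}{\OGTn}=j_h^n(\lambh,\lambh)^{1/2}$, I obtain
\[
  \sup_{\vbh\in\Vbhn}\frac{b_h^n(\lambh,\vbh)}{\tnrms{\vbh}}
  \geqc\frac{A^2}{\tnrms{\vbh}}\geqc\frac{A^2}{A+B},
\]
using $\tnrms{\vbh}\leqc\tnrms{\lambh}=\sqrt{A^2+B^2}\le A+B$. A short case distinction ($A\le B$ versus $A>B$) shows $\tfrac{A^2}{A+B}+B\geqc\sqrt{A^2+B^2}=\tnrms{\lambh}$, and since $B=j_h^n(\lambh,\lambh)^{1/2}$ this is precisely the asserted estimate; the stabilisation term on the right absorbs exactly the component of the multiplier norm that the interface test function cannot reach.

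The main obstacle is the construction of $\vbh$ in the first step: I must produce a genuine element of the (globally smooth, degree-$k$) space $\Vbhn$ whose interface trace is $h\lambh$ but whose gradient stays supported near $\Ghn$, so that the inverse estimate is legitimate and no uncontrolled contribution from the extension strip $\OdT{n}\setminus\OGTn$ enters. I would realise this by an element-wise definition through the interface-element degrees of freedom combined with a cut-off carried out within $\Vbhn$, and verify --- using shape-regularity and the interface-resolution hypotheses of \autoref{assumption.ghost-penalty:strip-width} --- that all constants are independent of $h$ and of how the interface $\Ghn$ cuts the mesh.
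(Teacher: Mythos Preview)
Your proposal is correct and follows essentially the same Fortin-type route as the paper: construct a test function $\vbh=h\widetilde\lambh\in\Vbhn$ (the paper takes $\widetilde\lambh$ to be $\lambh$ on $\OGTn$ with all remaining degrees of freedom in $\OdT{n}$ set to zero), observe $b_h^n(\lambh,\vbh)=\nrm{h^{1/2}\lambh}{\Ghn}^2$, bound $\tnrms{\vbh}\leqc\tnrms{\lambh}$ via inverse/trace estimates and \autoref{lemma.lagrange-stability.norm}, and conclude. The only cosmetic difference is in the final algebra: instead of your case distinction on $A$ versus $B$, the paper divides the identity $\tnrms{\lambh}^2=b_h^n(\lambh,h\widetilde\lambh)+j_h^n(\lambh,\lambh)$ by $\tnrms{\lambh}$ and uses $j_h^n(\lambh,\lambh)/\tnrms{\lambh}\le j_h^n(\lambh,\lambh)^{1/2}$ directly, which is slightly cleaner.
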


\begin{proof}
The proof follows ideas from~\cite[Lemma~3]{FL17}. For a given
$\lambh\in\Nbhn$, let $\lambt$ be the $\PP^{k-1}$ finite element function that
is equal to $\lambh$ in $\OGTn$ and zero in all other degrees of freedom in
$\OdT{n}$. Then we immediately have that $h\lambt\in\Vbhn$ and
\begin{equation*}
  b_h^n(\lambh, h\lambt) = \nrm{h^{1/2}\lambh}{\Ghn}^2.
\end{equation*}
By the definition of the Lagrange-multiplier norm and the Cauchy-Schwarz
inequality applied to the stabilising form, we therefore have
\begin{equation}\label{eqn.inf-sup-proof}
  \tnrms[n]{\lambh} \leq \frac{b_h^n(\lambh, h\lambt)}{\tnrms[n]{\lambh}}
    + j_h^n(\lambh,\lambh)^\frac{1}{2}.
\end{equation}

Now, let $\OdT[i]{n}$ and $\OdT[e]{n}$ denote the domain of uncut elements
inside and outside the physical domain, respectively, i.e., $\OdT{n}=
\OdT[i]{n}\,\dot\cup\,\OGTn\,\dot\cup\,\OdT[e]{n}$. We then have with the
observation that $\lambt$ is only non-zero on a strip of width $3h$, as well as
uses of the trace and inverse estimates that
\begin{align*}
  \tnrms{\lambt}^2
    &\leqc h^{-2}\nrm{\lambt}{\OdT{n}}^2 + \nrm{h^{-1/2}\lambt}{\Ghn}^2\\
    &= h^{-2}(\nrm{\lambt}{\OdT[i]{n}}^2 + \nrm{\lambt}{\OdT[e]{n}}^2
      + \nrm{\lambh}{\OGTn}^2) + \nrm{h^{-1/2}\lambh}{\Ghn}^2\\
    &\leqc h^{-1}(\nrm{\lambt}{\partial\OdT[i]{n}\cap\partial\OGTn}^2
      +\nrm{\lambt}{\partial\OdT[e]{n}\cap\partial\OGTn}^2)
      + h^{-2}\nrm{\lambh}{\OGTn}^2 + \nrm{h^{-1/2}\lambh}{\Ghn}^2\\
    &\leqc \nrm{\nabla\lambh}{\OGTn}^2\! +
      h^{-2}\nrm{\lambh}{\OGTn}^2\! + \nrm{h^{-1/2}\lambh}{\Ghn}^2\\
    &\leqc h^{-2}\nrm{\lambh}{\OGTn}^2\! + \nrm{h^{-1/2}\lambh}{\Ghn}^2.
\end{align*}
With \cref{lemma.lagrange-stability.norm}, this gives that
\begin{equation}\label{eqn.bound-lambdat-tilde}
  \tnrms{h\lambt} \leq c\tnrms{\lamb}
\end{equation}
with $c>0$ independent of $h$ and $\lambh$. Inserting this estimate on the
right-hand side of \eqref{eqn.inf-sup-proof} gives
\begin{equation*}
  \tnrms[n]{\lambh} \leq c \frac{b_h^n(\lambh, h\lambt)}{\tnrms[n]{h\lambt}}
    + j_h^n(\lambh,\lambh)^\frac{1}{2}.
\end{equation*}
The claim then follows by taking the supremum over all $\vbh\in\Vbh$.
\end{proof}

\begin{lemma}
\label{lemma.stationary-lagrange.solvability}
Let us consider the bilinear form
\begin{equation*}
A_h^{n,\ast}((\ubh,\lambh),(\vbh,\mubh))\coloneqq
(a_h^n + \gsu i_h^n)(\ubh,\vbh) + b_h^n(\lambh,\vbh) + b_h^n(\mubh,\ubh)
- j_h^n(\lambh,\mubh).
\end{equation*}
Then for all $(\ubh,\lambh)\in\Vbhn\times\Nbhn$ there holds
\begin{equation*}
  \clem{lemma.stationary-lagrange.solvability}\tnrms{(\ubh,\lambh)}\leq
  \sup_{(\vbh,\mubh)\in\Vbhn\times\Nbhn}
  \frac{A_h^{n,\ast}((\ubh,\lambh),(\vbh,\mubh))}{\tnrms{(\vbh,\mubh)}},
\end{equation*}
where the constant $\clem{lemma.stationary-lagrange.solvability}>0$ is
independent of the mesh size $h$ and the mesh-interface cut position.
\end{lemma}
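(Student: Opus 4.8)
The plan is to establish this discrete inf-sup bound by the classical combined-test-function technique for stabilised saddle-point problems: given $(\ubh,\lambh)\in\Vbhn\times\Nbhn$, I will construct an explicit test pair $(\vbh,\mubh)$ for which simultaneously $A_h^{n,\ast}((\ubh,\lambh),(\vbh,\mubh))\geqc\tnrms{(\ubh,\lambh)}^2$ and $\tnrms{(\vbh,\mubh)}\leqc\tnrms{(\ubh,\lambh)}$. Dividing the first bound by the second and passing to the supremum over $(\vbh,\mubh)$ then yields $\clem{lemma.stationary-lagrange.solvability}$ as the ratio of two $h$-independent constants.

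First I would test with the diagonal choice $(\vbh,\mubh)=(\ubh,-\lambh)$. Since $b_h^n(\lambh,\ubh)$ enters the two off-diagonal slots with opposite signs it cancels exactly, while the term $-j_h^n(\lambh,-\lambh)$ becomes $+j_h^n(\lambh,\lambh)$. Invoking the coercivity of the stabilised Laplace form $(a_h^n+\gsu i_h^n)$ then gives
\[
  A_h^{n,\ast}((\ubh,\lambh),(\ubh,-\lambh))\geqc\tnrms{\ubh}^2+j_h^n(\lambh,\lambh).
\]
This controls the velocity part and the multiplier stabilisation, but provides no direct control of the full multiplier norm $\tnrms{\lambh}$, which is the gap the construction must close.

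To recover $\tnrms{\lambh}$, I would apply \autoref{lemma.bad-inf-sup} to select $\wb_h\in\Vbhn$, normalised so that $\tnrms{\wb_h}=\tnrms{\lambh}$, with
\[
  b_h^n(\lambh,\wb_h)\geq\big(\beta\tnrms{\lambh}-j_h^n(\lambh,\lambh)^{1/2}\big)\tnrms{\lambh},
\]
the supremum being attained since the spaces are finite-dimensional. I then form the combined test $(\vbh,\mubh)=(\ubh+\xi\wb_h,\,-\lambh)$ for a small parameter $\xi>0$. Expanding $A_h^{n,\ast}$, the $b_h^n(\lambh,\ubh)$ contributions again cancel, the coercive and $j_h^n(\lambh,\lambh)$ terms survive, and two new terms appear: the good term $\xi\,b_h^n(\lambh,\wb_h)$, bounded below by the inf-sup estimate above, and a cross term $\xi(a_h^n+\gsu i_h^n)(\ubh,\wb_h)$, bounded below through continuity by $-\xi\,C\,\tnrms{\ubh}\tnrms{\lambh}$.

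The main obstacle is then the calibration of $\xi$. Using weighted Young's inequalities I would absorb the cross term $\xi C\tnrms{\ubh}\tnrms{\lambh}$ into a fraction of the coercive $\tnrms{\ubh}^2$ term and the defect $\xi\,j_h^n(\lambh,\lambh)^{1/2}\tnrms{\lambh}$ into a fraction of $j_h^n(\lambh,\lambh)$, leaving a residual coefficient of the form $\big(\xi\beta-\xi^2 c\big)\tnrms{\lambh}^2$ in front of the multiplier norm, where $c$ depends only on $\beta$ and the continuity/coercivity constants. Choosing $\xi$ small enough that this coefficient is strictly positive produces the lower bound $A_h^{n,\ast}\geqc\tnrms{\ubh}^2+\tnrms{\lambh}^2=\tnrms{(\ubh,\lambh)}^2$. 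Finally, the triangle inequality gives $\tnrms{\vbh}\leq\tnrms{\ubh}+\xi\tnrms{\lambh}$, whence $\tnrms{(\vbh,\mubh)}\leqc\tnrms{(\ubh,\lambh)}$, and the claimed estimate follows. The delicate point throughout is that every constant, in particular $\beta$ from \autoref{lemma.bad-inf-sup} and the coercivity constant, is independent of $h$ and of the mesh-interface cut position, which is precisely what the stabilising operators $i_h^n$ and $j_h^n$ were designed to guarantee.
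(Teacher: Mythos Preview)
Your proposal is correct and follows essentially the same combined-test-function strategy as the paper: test with $(\ubh+\xi\,\wb_h,-\lambh)$ for a suitable velocity perturbation $\wb_h$ tied to $\lambh$, then balance the cross terms via Young's inequality for $\xi$ small. The only cosmetic difference is that the paper reuses the explicit construction $h\lambt$ from the proof of \autoref{lemma.bad-inf-sup} (so that $b_h^n(\lambh,h\lambt)=\nrm{h^{1/2}\lambh}{\Ghn}^2$ exactly), whereas you invoke the statement of that lemma as a black box to extract $\wb_h$; both choices lead to the same estimate with $h$- and cut-independent constants.
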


\begin{proof}
Let $h\lambt\in\Vbh$ be as in the proof of \cref{lemma.bad-inf-sup}.
Then, using the coercivity and continuity of the stabilised Laplace operator,
\eqref{eqn.bound-lambdat-tilde}, \cref{lemma.ghost-penalty} and Young's
inequality we have
\begin{multline*}
  A_h^{n,\ast}((\ubhn,\lambhn), (\ubhn + \alpha h\lambt, -\lambhn))\\
  \begin{aligned}
    &\geqc \tnrms{\ubhn}^2 - \alpha\tnrms{\ubhn}\tnrms{h\lambt}
      + \alpha\nrm{h^{1/2}\lambhn}{\Ghn}^2 
      + \nrm{h\nb\cdot\nabla\lambhn}{\OGTn}^2\\
    &\geqc (1 - \frac{\alpha}{2})\tnrms{\ubhn}^2 
      + \frac{\alpha}{2}\nrm{h^{1/2}\lambhn}{\Ghn}^2
      + (1-\frac{\alpha}{2})\nrm{h\nb\cdot\nabla\lambhn}{\OGTn}^2.  
  \end{aligned}
\end{multline*}
For $\alpha$ sufficiently small, it follows that
\begin{equation*}
  A_h^{n,\ast}((\ubhn,\lambhn), (\ubhn + \alpha h\lambt, -\lambhn))
  \geqc \tnrms{(\ubhn,\lambhn)}^2.
\end{equation*}
The claim then follows due to $\tnrms{(\ubhn + \alpha h\lambt, -\lambhn)} \leqc
\tnrms{(\ubhn, \lambhn)}$, which is a consequence of the triangle inequality
and \eqref{eqn.bound-lambdat-tilde}.
\end{proof}

\begin{corollary}
The CutFEM Lagrange-Multiplier method for the stationary Poisson problem given
by $A_h^{n,\ast}((\ubhn,\lambhn),(\vbh,\mubh)) = F_h((\vbh,\mubh))$ is uniquely
solvable and the condition number of the resulting stiffness matrix is bounded
independent of the mesh interface cut position.
\end{corollary}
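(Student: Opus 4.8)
The plan is to deduce both assertions from the Babuška-type inf-sup estimate of \autoref{lemma.stationary-lagrange.solvability} together with the continuity of the forms involved. First I would establish unique solvability. Since $\Vbhn\times\Nbhn$ is finite-dimensional and both the trial and test functions are drawn from this same space, the bilinear form $A_h^{n,\ast}$ is represented by a square matrix. The inf-sup estimate immediately yields injectivity: if $A_h^{n,\ast}((\ubhn,\lambhn),(\vbh,\mubh))=0$ for all $(\vbh,\mubh)\in\Vbhn\times\Nbhn$, then the supremum on the right-hand side of \autoref{lemma.stationary-lagrange.solvability} vanishes, forcing $\tnrms{(\ubhn,\lambhn)}=0$ and hence $(\ubhn,\lambhn)=(\bm{0},\bm{0})$, since $\tnrms{\cdot}$ is a genuine norm on the product space. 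For a square linear system, injectivity is equivalent to bijectivity, so the problem $A_h^{n,\ast}((\ubhn,\lambhn),(\vbh,\mubh))=F_h((\vbh,\mubh))$ admits a unique solution for every right-hand side $F_h$.

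For the condition-number bound, I would combine the lower bound from \autoref{lemma.stationary-lagrange.solvability} with an upper bound from continuity. The continuity of $A_h^{n,\ast}$ follows by assembling the continuity of the stabilised Laplace operator with the Cauchy-Schwarz estimates for $b_h^n$ and $j_h^n$ in the mesh-dependent norm; this gives a constant that, like the inf-sup constant, is independent of the mesh-interface cut position. Consequently the smallest and largest singular values of the operator, measured in the norm $\tnrms{\cdot}$, are bounded below and above by cut-independent constants.

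The remaining---and central---step is to translate these norm-based spectral bounds into a bound on the condition number of the actual stiffness matrix expressed in the nodal basis. Here I would invoke the norm equivalence between $\tnrms{\cdot}$ and the (appropriately $h$-scaled) Euclidean norm of the coefficient vectors. The crucial feature is that this equivalence holds with constants independent of the cut position, precisely because the norms $\tnrms{\cdot}$ are defined by integration over the whole active domain $\OdT{n}$ rather than over the cut physical domain $\Ohn$. This is exactly the property furnished by the ghost-penalty stabilisation via \autoref{lemma.ghost-penalty} and \autoref{lemma.lagrange-stability.norm}, which remove the dependence on small or anisotropic cut fractions. The hardest part is this final norm-to-matrix translation: without the ghost-penalty control it would fail, since small cut elements would otherwise render the stiffness matrix arbitrarily ill-conditioned as the interface approaches an element vertex. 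With the stabilisation in place, chaining the two norm equivalences yields a condition number bounded solely in terms of $h$ and the fixed constants of the method, independent of how the interface cuts the mesh.
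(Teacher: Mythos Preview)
Your proposal is correct and follows precisely the standard argument that the paper leaves implicit: the corollary is stated in the paper without proof, as an immediate consequence of \autoref{lemma.stationary-lagrange.solvability}. You have spelled out the usual chain---inf-sup gives injectivity, hence bijectivity in finite dimensions; inf-sup plus continuity bounds the operator norm and its inverse in $\tnrms{\cdot}$; and the cut-independent equivalence between $\tnrms{\cdot}$ and the $h$-scaled Euclidean norm (which holds because the norms live on $\OdT{n}$ and $\OGTn$ rather than on the cut portions) transfers this to the matrix condition number---which is exactly what the paper intends by presenting the result as a corollary.
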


We now show stability of the discrete scheme in the following fully
discrete counterpart to \cref{lemma.temp-semi:stabil}.

\begin{theorem}\label{thm.discrete-stability}
Let $\{(\ubh^m,\Ubh^m, \Ch^m)\}_{m=1}^N$ be the velocity and position solution
to \eqref{eqn.discrete-form}. Then under assumptions
\ref{assumption.discrete-velocity}, \ref{assump.eulerian:extension}
and \ref{assumption.ghost-penalty:strip-width}, with $\gamma_s$
sufficiently large, and $\dt$ sufficiently small, we have for 
$m=1,\dots,N$ the stability estimate
\begin{multline*}
  \nrm{\ubh^m}{\Oh^m}^2 + \nrm{\Ubh^m}{2}^2 + \nrm{\Ch^m}{2}^2
  + \dt\sum_{n=1}^{m}\Big[\cthm[a]{thm.discrete-stability}
    \nrm{\nabla\ubhn}{\OdT{n}}^2
    + \gsl j_h^n(\lambhn,\lambhn)\Big]\\
  \leq \exp\!\left(t^m \frac{\cthm[b]{thm.discrete-stability}}{
                           1-\dt\cthm[b]{thm.discrete-stability}}\right)\!\!
  \left[
\nrm{\ubh^0}{\Oh^0}^2 + \nrm{\Ubh^0}{2}^2
  + \frac{\clem{lemma.ghost-penalty}\dt}{2}\tnrms[0]{\nabla\ubh^0}^2
  + t^m \frac{\cthm[c]{thm.discrete-stability}}{\vert\G\vert}\nrm{\gb}{2}^2
  \right]\!,
\end{multline*}
with constants $\cthm[a]{thm.discrete-stability},
\cthm[b]{thm.discrete-stability}, \cthm[c]{thm.discrete-stability} > 0$ 
independent of $h$, $\dt$ and $m=1,\dots,N$.
\end{theorem}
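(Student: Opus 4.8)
The plan is to mimic the proof of the semi-discrete stability result (Lemma~\ref{lemma.temp-semi:stabil}) as closely as possible, making the necessary modifications to account for the ghost-penalty stabilisation and the fact that the discrete extension is now implicit through the stabilisation rather than an explicit operator $\Ex$. First I would test the fully discrete formulation \eqref{eqn.discrete-form} with $(\vbh,\mubh,\Vbb) = 2\dt(\ubhn, -\lambhn, \Ubhn)$. As in the continuous and semi-discrete cases, the Lagrange-multiplier term reproduces the drag force, $b_h^n(\lambhn,\Ubhn)=\lprod{\Fbhn}{\Ubhn}{2}$, which cancels against the drag contribution on the right-hand side; the stabilisation term $j_h^n$ enters with the test function $-\lambhn$ and will contribute the desired $\dt\,\gsl j_h^n(\lambhn,\lambhn)$ term on the left. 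The constant-in-space external force $\gb$ is handled exactly as in \eqref{eqn.model:cont-est:T4a} via the trace and Poincaré estimates together with a weighted Young's inequality, producing the $\nrm{\gb}{2}^2$ term and absorbing half a gradient contribution. The discrete time-derivative terms for both $\ubhn$ and $\Ubhn$ are treated by the polarisation identity, yielding $\nrm{\ubhn}{\Ohn}^2 + \nrm{\ubhn-\ubh^{n-1}}{\Ohn}^2 - \nrm{\ubh^{n-1}}{\Ohn}^2$ and analogously for $\Ubhn$.

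The crucial departure from the semi-discrete case is the control of the ``implicit extension''. In Lemma~\ref{lemma.temp-semi:stabil} we bounded $\nrm{\Ex\ub^{n-1}}{\Ohn}^2$ using the explicit extension estimate from~\cite[Lemma~3.5]{LO19}. Here the role of the extension is played by the ghost-penalty operator: the key point is that the discrete solution $\ubh^{n-1}$, though only physically meaningful on $\Oh^{n-1}$, is a well-defined finite element function on the larger active domain $\OdT[]{n-1}$, and the stabilisation $\gsu i_h^{n-1}$ controls its energy on the extension strip. Thus I would use Lemma~\ref{lemma.ghost-penalty} to relate $\nrm{\nabla\ubh^{n-1}}{\OdT{n-1}}^2$ to $\nrm{\nabla\ubh^{n-1}}{\Oh^{n-1}}^2 + K\,i_h^{n-1}(\ubh^{n-1},\ubh^{n-1})$, and then an $L^2$ analogue of the Lehrenfeld--Olshanskii extension estimate, adapted to the discrete ghost-penalty setting, to bound $\nrm{\ubh^{n-1}}{\Ohn}^2$ (the norm over the \emph{next} physical domain, which sits inside the extended strip by \eqref{eqn.extension-domiain-inlclusion}) by $(1+c\dt)\nrm{\ubh^{n-1}}{\Oh^{n-1}}^2$ plus a small multiple of the strip gradient energy. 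This is precisely where the choice $\gsu = \gamma_s K$ from \eqref{eqn.ghost-penalty-parameter} and the requirement that $\gamma_s$ be sufficiently large enter: the factor $K$ compensates the weakening of the stabilisation on wide extension strips, so that the spurious strip-gradient contribution can be absorbed into the coercive $\dt\,\nrm{\nabla\ubhn}{\OdT{n}}^2$ term on the left-hand side, at the cost of a factor $(1+c\dt)$ that feeds the Gronwall constant.

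Having assembled the per-step inequality, the analogue of \eqref{eqn.temp-semi:lemma-stab.proof.2}, I would sum over $n=1,\dots,m$, using that the positive squared increment terms from polarisation are nonnegative and may be dropped, and that the telescoping $-\nrm{\ubh^{n-1}}{\Ohn}^2$ plus $+\nrm{\ubhn}{\Ohn}^2$ almost telescopes up to the geometric mismatch handled above. The $\gsl j_h^n(\lambhn,\lambhn)$ terms accumulate into the sum on the left. Finally I would invoke the discrete Gronwall inequality~\cite[Lemma~5.1]{HR90}, with the multiplicative factor $(1+c\dt)$ turning into the exponential $\exp(\cthm[b]{thm.discrete-stability}t^m)$, and collect the remaining data terms into the bracketed right-hand side, identifying the constants $\cthm[a]{thm.discrete-stability}$, $\cthm[b]{thm.discrete-stability}$ and $\cthm[c]{thm.discrete-stability}$.

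The main obstacle I anticipate is establishing the discrete extension estimate in the second paragraph, that is, bounding the $L^2$-norm of the previous discrete solution over the new domain by its norm over the old domain plus controllable stabilisation energy. In the semi-discrete case this came for free from the explicit operator $\Ex$ and its assumed estimates; in the fully discrete case one must instead quantify how far a finite element function can ``grow'' across the extension strip, controlled only by its ghost-penalty jumps. This requires the path-counting structure of Assumption~\ref{assumption.ghost-penalty:strip-width}.c (bounded number of facet crossings $K$ and bounded multiplicity $M$) together with the norm equivalence of Lemma~\ref{lemma.ghost-penalty}, and getting the $\dt$-dependence of the resulting constant correct is essential for the subsequent Gronwall step to produce a clean exponential rather than an uncontrolled blow-up. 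Everything else, the drag cancellation, the forcing estimate, and the polarisation identities, is a direct transcription of the semi-discrete argument.
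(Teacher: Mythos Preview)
Your proposal is correct and follows essentially the same route as the paper's proof: test with $2\dt(\ubhn,-\lambhn,\Ubhn)$, use the polarisation identity, the drag cancellation $b_h^n(\lambhn,\Ubhn)=\lprod{\Fbhn}{\Ubhn}{2}$, the forcing estimate \eqref{eqn.model:cont-est:T4a}, then control $\nrm{\ubh^{n-1}}{\Ohn}^2$ via the discrete ghost-penalty extension estimate, sum, and apply discrete Gronwall. The ``main obstacle'' you flag --- bounding the $L^2$-norm of $\ubh^{n-1}$ on the new domain by its norm on the old domain plus $\dt$ times gradient and ghost-penalty energy --- is precisely \cite[Lemma~5.7]{LO19}, which the paper invokes directly; this yields the estimate \eqref{eqn.est-previous-vel-on-domain.discr} with the required $\dt$-scaling, so no new work is needed there.
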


\begin{proof}
The proof follows similar lines to that of \cref{lemma.temp-semi:stabil}.
We test \eqref{eqn.discrete-form} with $(\vbh,\mubh,\Vbb_1,\Vbb_2)=
2\dt(\ubhn,-\lambhn, \Ubhn,\Chn)$, use BDF1 polarisation identity,
the observation that $\lprod{\lambhn}{\Ubhn}{\Ghn} = \lprod{\Fbhn}{\Ubhn}{2}$
due to \eqref{eqn.discrete-force},
and the estimates \eqref{eqn.model:cont-est:T5a} and
\eqref{eqn.model:cont-est:T6}. This leads to
\begin{multline}\label{eqn.discrete-stability1}
  \nrm{\ubhn}{\Ohn}^2 + \nrm{\Ubhn}{2}^2  + \nrm{\Chn}{2}^2 
    + \dt\nrm{\nabla\ubhn}{\Ohn}^2\\
    + 2\dt\gsu i_h^n(\ubhn,\ubhn) + 2\dt\gsl j_h^n(\lambhn,\lambhn)\\
  \leq
    \nrm{\ubh^{n-1}}{\Ohn}^2 + \nrm{\Ubh^{n-1}}{2}^2 + \nrm{\Ch^{n-1}}{2}^2
    + \dt\nrm{\Ubhn}{2}^2 + \dt\nrm{\Chn}{2}^2
    + \frac{c_1\dt}{\vert\G\vert}\nrm{\gb}{2}^2.
\end{multline}
To deal with the norm of $\ubh^{n-1}$ on $\Ohn$, we recall
\cite[Lemma~5.7]{LO19}, i.e., there exists a constant $c_{\text{LO}5.7}>0$,
independent of $\dt$ and $h$, such that
\begin{multline*}
  \nrm{\ubh}{\Odh{\Ohn}}^2
  \leq (1 + c_{\text{LO}5.7a}(\eps))\nrm{\ubh}{\Ohn}^2
   + c_{\text{LO}5.7b}(\eps)\dt\nrm{\nabla\ubh}{\Ohn}\\
   + c_{\text{LO}5.7c}(\eps, h)\dt K i_h^n(\ubh,\ubh)
\end{multline*}
with constants $c_{\text{LO}5.7a}(\eps) = c_{\text{LO}5.7}c_{\delta_h}\wninf(1 +
\eps^{-1})$, $c_{\text{LO}5.7b}(\eps) = c_{\text{LO}5.7}c_{\delta_h}
\wninf\eps$ and $c_{\text{LO}5.7c}(\eps, h) = c_{\text{LO}5.7}c_{\delta_h}
\wninf(\eps + h^2 + h^2\eps^{-1})$.
Since we have that $\Ohn\subset\Odh{\Oh^{n-1}}$, we have with the choice of
$\eps \leq 1 / 2 c_{\text{LO}5.7}c_{\delta_h}\wninf$ that
\begin{multline}\label{eqn.est-previous-vel-on-domain.discr}
  \nrm{\ubh^{n-1}}{\Ohn}^2 %
  \leq (1 + c_2\dt)\nrm{\ubh^{n-1}}{\Oh^{n-1}}^2
  + \frac{1}{2}\dt\nrm{\nabla\ubh^{n-1}}{\Oh^{n-1}}^2\\
  + c_3 K \dt i_h^{n-1}(\ubh^{n-1},\ubh^{n-1}),
\end{multline}
where $c_2,c_3>0$ are independent of $\dt$ and $h$. Inserting this into
\eqref{eqn.discrete-stability1}, under the assumption that $\gsu\geq c_3 K$,
summing over $n=1,\dots,k\leq N$ and applying \cref{lemma.ghost-penalty}
then gives
\begin{multline*}
  \nrm{\ubh^k}{\Oh^k}^2 + \nrm{\Ubh^k}{2}^2 + \nrm{\Ch^k}{2}^2
  + \dt\sum_{n=1}^{k}\Big[c_4\nrm{\nabla\ubhn}{\OdT{n}}^2
    + \gsl j_h^n(\lambhn,\lambhn)\Big]\\
  \leq \nrm{\ubh^0}{\Oh^0}^2 + \nrm{\Ubh^0}{2}^2
  + \frac{\clem{lemma.ghost-penalty}\dt}{2}\tnrms[0]{\nabla\ubh^0}^2
  + t^k \frac{c_1}{\vert\G\vert}\nrm{\gb}{2}^2\\
  + \dt c_2\sum_{n=1}^{k-1} \nrm{\ubhn}{\Ohn}^2
  + \dt \sum_{n=1}^k\left[\nrm{\Ubhn}{2}^2 + \nrm{\Chn}{2}^2\right].
\end{multline*}
The claim then follows by an application of the discrete form of Gronwall's
lemma.
\end{proof}

\subsection{Domain Error}
\label{sec.discrete:subsec.doimain-err}

In this section, we shall formalise the discrepancy between the exact domain
and the domain resulting from the discretised problem.
In contrast to~\cite{LO19,vWRL20,LL21} but as in~\cite{BFM19}, we shall assume
exact geometry handling, i.e., if the motion of the domain were known, we would
have $\O(t^n)=\Ohn$. We thus choose to ignore the geometry consistency
error of order $\OO(h^2)$ introduced by the piecewise linear level set
approximation inherent in CutFEM. We do this to focus our analysis on the
fact, that in the discretised setting, the motion of the domain results from
a discretised from of the ODE governing the motion. As a result there is a
miss-match between the motion of the domain between the smooth and discrete
case, i.e., $\O(t^n)\neq\Ohn$, and our analysis will focus on this error
source.

We note that the correct geometry order for high-order finite element spaces
can be recovered in CutFEM by using, for example, the isoparametric CutFEM
approach~\cite{Leh16}, which has been studied for a range of stationary
problems and recently extended to moving domain problems in~\cite{LL21}.

Now, the position of $\Sigma(t^n)$ and $\Sigma_h^n$ are governed by
\begin{equation*}
  \Ctn = \int_0^{t^n}\Ub\dif t = \C(t^{n-1})
    + \int_{t^{n-1}}^{t^n}\Ub\dif t
  \qquad\text{and}\qquad
  \Chn = \Ch^{n-1} + \dt\Ubhn.
\end{equation*}
The difference $\Ctn-\Chn$ represent the miss-match between the domains at
time $t^n$, which results from the discretisation of the problem
\eqref{eqn.model:weak-form}. To analyse this error, we define a mapping from
the discrete domain to the exact domain by $\Phi^n\colon \Ohn 
\rightarrow \O(t^n)$ by
\begin{equation}\label{eqn.domain:mapping}
  \Phi^n \coloneqq \id + (\Ctn - \Chn)\varphi^n,
\end{equation}
where $\varphi^n\in C^\infty(\Ohn)$, such that $\restr{\varphi^n}{\Ghn}=1$ and
$\restr{\varphi^n}{\Ohn\setminus\OO_{+}(\Ghn)}=0$. We take this mapping to be
invertible. In the following, this mapping takes a similar role as the geometry
approximation mapping in, e.g.,~\cite{LO19,vWRL20}. Consequently, the domain 
error can be quantified by $\nrm{\id - \Phi^n}{}$ in an appropriate norm.

\begin{lemma}\label{lemma.deform-mapping}
Let $\nrm{\cdot}{\infty}$ be the $\pazocal{L}^\infty$ norm on $\Ohn$. Then for
the mapping $\Phi^n:\Ohn\rightarrow\O(t^n)$ defined in
\eqref{eqn.domain:mapping}, it holds that
\begin{equation}\label{eqn.definition-M}
  \nrm{\id - \Phi^n}{\infty}^2 \leqc \dt^2\sum_{j=0}^n \nrm{\EU[j]}{2}^2
    + \dt^3t^n\nrm{\partial_t \Ub }{\infty}^2 \eqqcolon \MF(\EU[n], \dt),
\end{equation}
where $\EU[n]\coloneqq \Ub(t^n) - \Ubhn$.
\end{lemma}
\begin{proof}
By the definitions of $\Phi^n$, $\Ctn$ and $\Chn$, and the fact that
$\varphi^n$ is smooth, we have that
\begin{align*}
  \nrm{\id - \Phi^n}{\infty}^2
    \leqc \nrm{\Ctn - \Chn}{2}^2 \nrm{\varphi^n}{\infty}^2
    &\leqc \nrm{\Ctn - \Chn}{2}^2\\
    &\leqc \textstyle \nrm{\C(t^{n-1}) - \C_h^{n-1}}{2}^2
      + \nrm{\int_{t^{n-1}}^{t^{k}}\Ub(t) - \Ubhn\dif t}{2}^2.
\end{align*}
With respect to the final term, we have for $t\in[t^{n-1}, t^n]$ that
\begin{equation*}
  \Ub(t) \leq \Ub(t^n) - t \nrm{\partial_t \Ub}{\infty, [t^{n-1}, t^n]}
    \leq \Ub(t^n) - \dt\nrm{\partial_t \Ub }{\infty, [t^{n-1}, t^n]}.
\end{equation*}
Therefore, we have the bound
\begin{align*}
  \Big\Vert\int_{t^{n-1}}^{t^{k}}\Ub(t) - \Ubhn\dif t\Big\Vert_2^2
   &\leq \Big\Vert\int_{t^{n-1}}^{t^{k}}\Ub(t^n)
      - \dt\nrm{\partial_t \Ub }{\infty, [t^{n-1}, t^n]}
      - \Ub^n\dif t\Big\Vert_{2}^2\\
   &\leq \dt^2 \nrm{\EU[n]}{2}^2
    + \dt^4\nrm{\partial_t \Ub }{\infty,[t^{n-1},t^n]}^2.
\end{align*}
Iteratively repeating the above estimate for the
$\nrm{C(t^{n-1}) - C_h^{n-1}}{2}^2$ term then gives
\begin{align*}
  \nrm{\id - \Phi^n}{\infty}^2
    &\leqc \sum_{j=0}^n \left[\dt^2 \nrm{\EU[j]}{2}^2
    + \dt^4\nrm{\partial_t \Ub }{\infty,[t^{j-1},t^j]}^2\right]\\
    &\leqc \dt^2\sum_{j=0}^n \nrm{\EU[j]}{2}^2
      + \dt^3t^n\nrm{\partial_t \Ub }{\infty}^2.
\end{align*}
\end{proof}

\begin{lemma}\label{lemma.deform.est:eqiv}
For $\Phi^n$ defined in \eqref{eqn.domain:mapping}, describing the
mismatch between the exact and the discrete domain at time $t^n$, we have that
\begin{align*}
  \nrm{I - D\Phi^n}{\infty} &\simeq \nrm{\id - \Phi^n}{\infty}, &
  \nrm{1 - \det(D\Phi^n)}{\infty}&\simeq \nrm{\id - \Phi^n}{\infty},\\
  \nrm{\id - \Phi^n}{\infty,\Gn}&\simeq \nrm{\id - \Phi^n}{\infty}, &
  \nrm{1 - \det(D\Phi^n)}{\infty,\Gn}&\simeq \nrm{\id - \Phi^n}{\infty}.
\end{align*}
\end{lemma}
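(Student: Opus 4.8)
The plan is to reduce everything to the rank-one structure of the deformation. Writing $\bm{a}\coloneqq\Ctn-\Chn\in\R^d$ for the spatially constant displacement, \eqref{eqn.domain:mapping} gives the pointwise identities
\begin{equation*}
  \id-\Phi^n=-\bm{a}\,\varphi^n,\qquad
  I-D\Phi^n=-\bm{a}\otimes\nabla\varphi^n,\qquad
  1-\det(D\Phi^n)=-\bm{a}\cdot\nabla\varphi^n,
\end{equation*}
the last of which is the matrix-determinant lemma applied to $D\Phi^n=I+\bm{a}(\nabla\varphi^n)^{\top}$. Because $\varphi^n$ is the standard cut-off with $0\le\varphi^n\le1$ and $\varphi^n\equiv1$ on $\Ghn$, the first identity gives $\nrm{\id-\Phi^n}{\infty}=\nrm{\bm{a}}{2}$, and since the motion is a rigid translation so that $\Gn=\Ghn+\bm{a}=\Phi^n(\Ghn)$, evaluating on $\Ghn$ also gives $\nrm{\id-\Phi^n}{\infty,\Gn}=\nrm{\bm{a}}{2}$. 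This already settles the third equivalence and reduces the remaining three to comparing the relevant quantity with $\nrm{\bm{a}}{2}$.

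The decisive structural input is that $\varphi^n$ is a \emph{fixed} cut-off, chosen independently of $\dt$ and $h$ and supported in the bounded-width neighbourhood $\OO_{+}(\Ghn)$, so that $\nrm{\nabla\varphi^n}{\infty}\eqc1$ with constants depending only on the time-uniform geometry. All upper bounds are then immediate from the identities above: the operator norm of the rank-one matrix satisfies $\nrm{\bm{a}\otimes\nabla\varphi^n}{\infty}=\nrm{\bm{a}}{2}\nrm{\nabla\varphi^n}{\infty}$, and Cauchy--Schwarz gives $\abs{\bm{a}\cdot\nabla\varphi^n}\le\nrm{\bm{a}}{2}\nrm{\nabla\varphi^n}{2}$, whence $\nrm{I-D\Phi^n}{\infty}$, $\nrm{1-\det(D\Phi^n)}{\infty}$ and $\nrm{1-\det(D\Phi^n)}{\infty,\Gn}$ are all $\leqc\nrm{\bm{a}}{2}$. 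The lower bound for $\nrm{I-D\Phi^n}{\infty}$ is just as clean, since the rank-one identity is an equality, $\nrm{I-D\Phi^n}{\infty}=\nrm{\bm{a}}{2}\nrm{\nabla\varphi^n}{\infty}\geqc\nrm{\bm{a}}{2}$, with no dependence on the direction of $\bm{a}$.

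The main obstacle is the matching lower bound for the \emph{scalar} quantity $\bm{a}\cdot\nabla\varphi^n$, where an adverse choice of direction could in principle make the inner product small. I would exploit that the interface is \emph{closed}: on $\Ghn$ the function $\varphi^n\equiv1$ is constant, so its tangential derivative vanishes and $\nabla\varphi^n$ is purely normal there with $\nrm{\nabla\varphi^n}{2}\eqc1$, while the unit normal of a closed surface attains every direction of $S^{d-1}$. Hence there is a point of $\Ghn$ at which $\nabla\varphi^n$ is parallel to $\bm{a}$, and at that point $\abs{\bm{a}\cdot\nabla\varphi^n}=\nrm{\bm{a}}{2}\nrm{\nabla\varphi^n}{2}\geqc\nrm{\bm{a}}{2}$. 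This yields $\nrm{1-\det(D\Phi^n)}{\infty,\Gn}\geqc\nrm{\bm{a}}{2}$, and since the supremum over $\Ohn$ dominates the one over the interface $\Ghn\subset\overline{\Ohn}$, also $\nrm{1-\det(D\Phi^n)}{\infty}\geqc\nrm{\bm{a}}{2}$. Combining the upper and lower bounds proves all four equivalences.
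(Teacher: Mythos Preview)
Your argument is correct and follows the same idea as the paper, which offers only a one-line justification (``This follows by $\varphi^n\in C^\infty(\On)$ and the fact that the remaining components of $\Phi^n$ are independent of space''), i.e.\ exactly your observation that every quantity factors as the constant vector $\bm{a}=\Ctn-\Chn$ times a fixed smooth object built from $\varphi^n$. Your write-up is considerably more explicit: you spell out the rank-one identities $I-D\Phi^n=-\bm{a}\otimes\nabla\varphi^n$ and $1-\det(D\Phi^n)=-\bm{a}\cdot\nabla\varphi^n$ via the matrix-determinant lemma, and you supply a genuine argument for the \emph{lower} bounds on the determinant quantities (using that the outward normal of the closed interface attains every direction, so $\bm{a}\cdot\nabla\varphi^n$ cannot be uniformly small), which the paper does not address at all. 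One small caveat worth noting: your lower bound on $\abs{\bm{a}\cdot\nabla\varphi^n}$ at the interface tacitly assumes $\nrm{\nabla\varphi^n}{2}\eqc 1$ on $\Ghn$; this is true for the natural choice of $\varphi^n$ as a function of the signed distance but is not forced by the bare hypotheses $\varphi^n\in C^\infty$, $\restr{\varphi^n}{\Ghn}=1$. Since the paper only ever uses the upper bounds downstream, this does not affect anything that follows.
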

\begin{proof}
This follows by $\varphi^n\in C^\infty(\On)$ and the fact that the remaining
components of $\Phi^n$ are independent of space.
\end{proof}

\begin{lemma}\label{lemma.deform-est}
Let $\MF(\EU[n], \dt)$ be as defined in \eqref{eqn.definition-M}.
Then for $\ub\in\Hb{3}{}{\O(t^n)}$, it holds that
\begin{align}
  \nrm{\ub\cphin - \Ex\ub}{\Ohn}^2 &\leqc \nrm{\ub}{\Hb{1}{}{\O(t^n)}}^2
    \MF(\EU[n], \dt),\label{eqn.defoem-ext-est:l2}\\
  \nrm{(\nabla\ub)\cphin - \nabla\Ex\ub}{\Ohn}^2
    &\leqc \nrm{\ub}{\Hb{2}{}{\O(t^n)}}^2
    \MF(\EU[n], \dt),\label{eqn.defoem-ext-est:h1}\\
  \nrm{\ub\cphin - \Ex\ub}{\Ghn}^2
    &\leqc \nrm{\ub}{\Hb{2}{}{\O(t^n)}}^2
   \MF(\EU[n], \dt),\label{eqn.defoem-ext-est:trace}\\
  \nrm{(\partial_{\nb}\ub)\cphin - \partial_{\nb}\Ex\ub}{\Ghn}^2
    &\leqc \nrm{\ub}{\Hb{3}{}{\O(t^n)}}^2
    \MF(\EU[n], \dt).\label{eqn.defoem-ext-est:normal-deriv}
\end{align}
\end{lemma}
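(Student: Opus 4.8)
The plan is to reduce all four estimates to a single mechanism: comparing a function evaluated along $\Phi^n$ with the same function evaluated along the identity, and then exploiting that $\Phi^n$ differs from $\id$ only by the amount controlled in \autoref{lemma.deform-mapping}. The starting observation is that $\Ex\ub$ agrees with $\ub$ on $\O(t^n)$ and that $\Phi^n$ maps $\Ohn$ into $\O(t^n)$; consequently $\ub\cphin = (\Ex\ub)\cphin$ on $\Ohn$, and likewise $(\nabla\ub)\cphin = (\nabla\Ex\ub)\cphin$ since the two gradients coincide on $\O(t^n)$. Hence every left-hand side above takes the form $(\Ex w)\cphin - \Ex w$ with $w\in\{\ub,\nabla\ub,\partial_{\nb}\ub\}$, i.e., the difference of $\Ex w$ along $\Phi^n$ and along $\id$.

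First I would treat the bulk $L^2$ estimate \eqref{eqn.defoem-ext-est:l2}. Writing $\gamma_s=\id+s(\Phi^n-\id)$ and using the fundamental theorem of calculus along this segment gives the pointwise bound $|(\Ex\ub)\cphin-\Ex\ub|(\xb)\leq\nrm{\id-\Phi^n}{\infty}\int_0^1|\nabla\Ex\ub(\gamma_s(\xb))|\dif s$. Squaring, integrating over $\Ohn$, and applying the change of variables $\bm{y}=\gamma_s(\xb)$ — whose Jacobian is uniformly close to one by \autoref{lemma.deform.est:eqiv} — reduces the integrand to $\nrm{\nabla\Ex\ub}{\Od{\On}}^2$, which \eqref{eqn.eulerian:GradExtentionExtimate} bounds by $\nrm{\ub}{\Hb{1}{}{\O(t^n)}}^2$. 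Inserting $\nrm{\id-\Phi^n}{\infty}^2\leqc\MF(\EU[n],\dt)$ from \autoref{lemma.deform-mapping} yields \eqref{eqn.defoem-ext-est:l2}. The gradient estimate \eqref{eqn.defoem-ext-est:h1} follows by repeating the argument verbatim with $w=\nabla\ub$: the mean-value step now produces $D^2\Ex\ub$ along the segment, and \eqref{eqn.eulerian:ExtentionExtimate} controls $\nrm{\Ex\ub}{\Hb{2}{}{\Od{\On}}}$ by $\nrm{\ub}{\Hb{2}{}{\O(t^n)}}$.

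For the two interface estimates I would combine the same decomposition with a multiplicative trace inequality from $\Ghn$ into a surrounding bulk neighbourhood, each such trace step costing one additional derivative. For \eqref{eqn.defoem-ext-est:trace} this promotes the bulk $\Hb{1}{}$-type control of $(\Ex\ub)\cphin-\Ex\ub$ to its $\pazocalbf{L}^2(\Ghn)$ norm at the price of the $\Hb{2}{}$-norm of $\ub$; for \eqref{eqn.defoem-ext-est:normal-deriv} the same scheme applied to $w=\partial_{\nb}\ub$ costs one derivative for the normal derivative itself and one more for the trace, landing at the $\Hb{3}{}$-norm as claimed. Throughout, the boundary perturbation factors $\nrm{\id-\Phi^n}{\infty,\Gn}$ and $\nrm{1-\det(D\Phi^n)}{\infty,\Gn}$ arising in the surface change of variables are absorbed using \autoref{lemma.deform.est:eqiv}.

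The main obstacle I anticipate is the careful bookkeeping in the two trace estimates: one must verify that each application of the trace inequality and of the mean-value formula costs exactly one derivative, so that the right-hand sides land at $\Hb{2}{}$ and $\Hb{3}{}$ and no higher. A related technical point is the treatment of the normal field in \eqref{eqn.defoem-ext-est:normal-deriv}: since $\partial_{\nb}$ uses the bulk-extended normal $\nb=-\nabla\phi/\nrm{\nabla\phi}{2}$, one must check that replacing $\nb$ at $\Phi^n(\xb)$ by $\nb$ at $\xb$ contributes only a further $\nrm{\id-\Phi^n}{\infty}$ term, which the smoothness of $\phi$ again absorbs into $\MF(\EU[n],\dt)$. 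Finally, since $\Od{\On}$ need not be convex, I would confirm that for $\dt$ small enough the deformed segments $\gamma_s(\Ohn)$ remain inside the extension domain, so that all evaluations of $\Ex\ub$ are legitimate.
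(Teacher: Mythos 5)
Your proposal is correct and is essentially the paper's own argument: the paper reduces the lemma to the bound on $\nrm{\id-\Phi^n}{\infty}$ from \autoref{lemma.deform-mapping} and then declares the rest ``completely analogous'' to the geometry-approximation-error proof of \cite[Lemma~7.3]{GOR15}, which is precisely the perturbation-of-identity mechanism you carry out explicitly (fundamental theorem of calculus along the segments $\id+s(\Phi^n-\id)$, change of variables with Jacobian controlled via \autoref{lemma.deform.est:eqiv}, the extension stability of \autoref{assump.eulerian:extension}, and multiplicative trace inequalities for the interface estimates). There is no gap; your write-up simply supplies the details, including the normal-field perturbation and the inclusion of the segments in the extension domain, that the paper outsources to the citation.
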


\begin{proof}
$\Phi^n$ maps the approximated interface location $\Ghn$ to the exact interface
location $\G(t^n)$, and we know that the distance between the two is given by
$\vert \C(t^n) - \Chn\vert$ for which we have proven the estimate in the proof
of \cref{lemma.deform-mapping}. The proof of
\eqref{eqn.defoem-ext-est:l2}--\eqref{eqn.defoem-ext-est:normal-deriv} is
therefore completely analogous to that of the geometry approximation error
in~\cite[Lemma~7.3]{GOR15}.
\end{proof}

We note that the domain error still depends on the error of the interface
velocity. This is to be expected, since we will only be able to bound this
error together with the entire velocity error.

\subsection{Consistency Error}
\label{sec.discrete:subsec.consistency-err}

In this section, we analyse the consistency of our discrete formulation.
To ease the upcoming notation, we shall identify $\ub$ and $\lamb$ with their
extensions. Due to \eqref{eqn.extension-domiain-inlclusion}, we can define the
error on the discrete domain. Therefore, let us define the bulk-velocity,
interface-velocity and Lagrange-multiplier errors as
\begin{equation}\label{eqn.definition-errors}
  \begin{aligned}
    \Eu[n]&\coloneqq \ub(t^n) - \ubhn,\quad&
    \El[n]&\coloneqq \lamb(t^n) - \lambhn,\\
    \EU[n]&\coloneqq \Ub(t^n) - \Ubhn,&
    \EC[n]&\coloneqq \C(t^n) - \Chn 
\end{aligned} 
\end{equation}
Now, to derive an error equation for our discretisation, we observe that if
$(\vbh,\mubh)\in\Vbhn\times\Nbhn$ are suitable
test-functions for the discrete problem \eqref{eqn.discrete-form},
then they are not necessarily valid test-functions for the smooth problem
\eqref{eqn.model:weak-form}. However, using the mapping $\Phi^n$, we define
$\vbhl\coloneqq\vbh\cphininv$ and $\mubhl\coloneqq\mubh\cphininv$. Inserting
these test-function into \eqref{eqn.model:weak-form}, subtracting 
\eqref{eqn.discrete-form}, as well as adding and subtracting appropriate
terms, we get the error equation
\begin{multline}\label{eqn.consistecy-error}
  \frac{1}{\dt}\lprod{\Eu[n] - \Eu[n-1]}{\vbh}{\Ohn}^2
    + \lprod{\nabla\Eu[n]}{\nabla\vbh}{\Ohn}^2
    + \gsu i_h^n(\Eu[n], \vbh)
    + \lprod{\El[n]}{\vbh}{\Ghn}\\
  + \lprod{\mubh}{\Eu[n] - \EU[n]}{\Ghn}
    - \gsl j_h^n(\El[n], \mubh)
    + \frac{1}{\dt}\lprod{\EU[n] - \EU[n-1]}{\Vbb_1}{2}
    + \lprod{\Fbhn - \Fb(t^n)}{\Vbb_1}{2}\\
    + \frac{1}{\dt}\lprod{\EC[n] - \EC[n-1]}{\Vbb_2}{2}
    - \lprod{\EU[n]}{\Vbb_2}{2}
  = \Ecn(\vbh, \mubh, \Vbb_1, \Vbb_2),
\end{multline}
with the consistency error
\begin{multline*}
  \Ecn(\vbh, \mubh, \Vbb_1, \Vbb_2)\coloneqq
     \underbrace{\frac{1}{\dt}\lprod{\ub(t^n) - \ub(t^{n-1})}{\vbh}{\Ohn}
      - \lprod{\partial_t\ub(t^n)}{\vbhl}{\O(t^n)}}_{\TT_1}\\
  + \underbrace{ \lprod{\nabla\ub(t^n)}{\nabla\vbh}{\Ohn}
      - \lprod{\nabla\ub(t^n)}{\nabla\vbhl}{\O(t^n)}}_{\TT_2}
    + \underbrace{ \lprod{\lamb(t^n)}{\vbh}{\Ghn}
      - \lprod{\lamb(t^n)}{\vbhl}{\G(t^n)}}_{\TT_3}\\
  + \underbrace{ \lprod{\mubh}{\ub(t^n) - \Ub(t^n)}{\Ghn}
      - \lprod{\mubhl}{\ub(t^n) - \Ub(t^n)}{\G(t^n)}}_{\TT_4}\\
  + \underbrace{ \frac{1}{\dt}\lprod{\Ub(t^n) - \Ub(t^{n-1})}{\Vbb_1}{2}
      - \lprod{\frac{\dif}{\dif t}\Ub}{\Vbb_1}{2}}_{\TT_5}
    + \underbrace{ \frac{1}{\dt}\lprod{\C(t^n) - \C(t^{n-1})}{\Vbb_2}{2}
      - \lprod{\frac{\dif}{\dif t}\C}{\Vbb_2}{2}}_{\TT_6}\\
  + \underbrace{ \gsu i_h^n(\ub(t^n),\vbh)}_{\TT_7}
      - \underbrace{\gsl j_h^n(\lamb(t^n),\mubh)}_{\TT_8}.
\end{multline*}

\begin{lemma}[Consistency Estimate]
\label{lemma.consistency-error-est}
Let the $\ub$ fulfil the regularity assumption
$\ub\in\Wb{3,\infty}{\QQ}\cap\pazocal{L}^{\infty}
(0,\tend;\Hb{k+1}{}{\O(t)})$, then the consistency error can be bounded by
\begin{multline*}
  \vert \Ecn \vert \leqc
    (\dt + h^k K^{\frac{1}{2}} + \MF(\EU[n], \dt)^\frac{1}{2})\Big(
      \nrm{\ub}{\Wb{3,\infty}{\QQ}}
      + \sup_{t\in[0,\tend]}\nrm{\ub}{\Hb{k+1}{}{\O(t)}}\Big)\tnrms[n]{\vbh}\\
       + \dt\sup_{t\in[0,\tend]}\nrm{\Ub_{tt}}{2}\nrm{\Vbb_1}{2}
       + \dt\sup_{t\in[0,\tend]}\nrm{\Ub_{t}}{2}\nrm{\Vbb_2}{2}.
\end{multline*}

\end{lemma}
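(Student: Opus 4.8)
The plan is to bound the seven contributions $\TT_1,\dots,\TT_7$ individually, grouping them by the error mechanism they encode: a \emph{temporal} error (the genuine time-stepping part of $\TT_1$ and all of $\TT_5$), a \emph{geometric} error stemming from the mismatch $\O(t^n)\neq\Ohn$ ($\TT_2$, $\TT_3$, $\TT_4$ and the remaining part of $\TT_1$), and a \emph{ghost-penalty} consistency error ($\TT_6$ and $\TT_7$). As a preliminary I would record the norm reductions used throughout: a Poincaré-type inequality on $\Vbhn$ gives $\nrm{\vbh}{\Ohn}\leqc\tnrms{\vbh}$, the $\ast$-norm directly controls $\nrm{\vbh}{\Ghn}\leqc h^{1/2}\tnrms{\vbh}$ and $\nrm{\nabla\vbh}{\OdT{n}}\leq\tnrms{\vbh}$, while the $\R^d$-factors are left as $\nrm{\Vbbh}{2}$.

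For the temporal contributions I would Taylor-expand the (extended) solution in time about $t^n$. Writing $\ub$ for its extension, $\frac1{\dt}(\ub(t^n)-\ub(t^{n-1}))=\partial_t\ub(t^n)+\mathcal R$ with $\nrm{\mathcal R}{\Ohn}\leqc\dt\nrm{\ub}{\Wb{2,\infty}{\Od{\QQ}}}\leqc\dt\nrm{\ub}{\Wb{3,\infty}{\QQ}}$ by the space–time extension bound \eqref{eqn.eulerian:W2ExtentionExtimate}, and analogously $\frac1{\dt}(\Ub(t^n)-\Ub(t^{n-1}))=\frac{\dif}{\dif t}\Ub(t^n)+\rho$ in $\R^d$ with $\nrm{\rho}{2}\leqc\dt\nrm{\Ub_{tt}}{2}$. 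The latter is precisely $\TT_5$ and yields the separate term $\dt\sup_{t}\nrm{\Ub_{tt}}{2}\nrm{\Vbbh}{2}$, whereas the remainder $\mathcal R$ in $\TT_1$ contributes $\dt\nrm{\ub}{\Wb{3,\infty}{\QQ}}\tnrms{\vbh}$.

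The geometric terms are the technical heart. After the temporal reduction, $\TT_1$, $\TT_2$, $\TT_3$ and $\TT_4$ each compare an integral over the discrete geometry with the same integrand over the exact geometry tested against the lift $\vbhl=\vbh\cphininv$ (respectively $\mubhl$). I would pull the exact-domain integrals back to $\Ohn$ and $\Ghn$ through the change of variables $\Phi^n$; each term then splits into the exact field paired against $\vbh$ weighted by the Jacobian defects $1-\det D\Phi^n$ or $I-D\Phi^n$, plus the defect between the exact field and its $\Phi^n$-pullback. By \autoref{lemma.deform.est:eqiv} the Jacobian defects are equivalent to $\nrm{\id-\Phi^n}{\infty}$, and by \autoref{lemma.deform-est} the pullback defects of $\ub$, $\nabla\ub$ and the traces of $\ub$ and of $\partial_{\nb}\ub=-\lamb$ are each controlled by $\MF(\EU[n],\dt)^{1/2}$ times an $\Hb{m}{}{\O(t^n)}$-norm of $\ub$ with $1\le m\le 3$, via \eqref{eqn.defoem-ext-est:l2}--\eqref{eqn.defoem-ext-est:normal-deriv}; consequently each such term is of order $\MF(\EU[n],\dt)^{1/2}\nrm{\ub}{\Wb{3,\infty}{\QQ}}\tnrms{\vbh}$. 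For $\TT_4$ I would additionally invoke the exact Dirichlet condition $\ub(t^n)=\Ub(t^n)$ on $\G(t^n)$, so that its exact-geometry half vanishes outright and only the discrete-interface integral, again of size $\MF^{1/2}$ by \eqref{eqn.defoem-ext-est:trace}, survives.

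Finally, $\TT_6$ and $\TT_7$ I would treat by Cauchy--Schwarz in the stabilisation forms: \autoref{lemma.ghost-penalty} gives $i_h^n(\vbh,\vbh)^{1/2}\leqc K^{-1/2}\tnrms{\vbh}$, while the standard ghost-penalty consistency of the smooth extension yields $i_h^n(\ub(t^n),\ub(t^n))^{1/2}\leqc h^{k}\sup_{t}\nrm{\ub}{\Hb{k+1}{}{\O(t)}}$; since $\gsu=\gamma_s K$, their product is exactly of order $h^{k}K^{1/2}$, which is the origin of that factor in the claimed bound, and $\TT_7$ is estimated identically through $j_h^n$. I expect the geometric group to be the main obstacle: one must insert the correct intermediate quantities so that the temporal and geometric perturbations in $\TT_1$ decouple cleanly, carry the Jacobian factors correctly through the change of variables on both the bulk $\Ohn$ and the $(d-1)$-dimensional $\Ghn$, and confirm that the $\Hb{3}{}{\O(t^n)}$-regularity demanded by the normal-derivative estimate \eqref{eqn.defoem-ext-est:normal-deriv} is indeed supplied by the assumed $\Wb{3,\infty}{\QQ}$-regularity of $\ub$.
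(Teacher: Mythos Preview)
Your overall strategy matches the paper's: split into temporal, geometric and ghost-penalty blocks, Taylor-expand for $\TT_1$ and $\TT_5$, pull back through $\Phi^n$ for $\TT_2$ and $\TT_3$, and use the ghost-penalty consistency for $\TT_6$. Those parts are essentially correct.

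The gap is in your treatment of $\TT_4$ and $\TT_7$. Notice that the claimed bound contains \emph{no} $\tnrms{\mubh}$-term whatsoever. Your plan for $\TT_4$ (``only the discrete-interface integral survives, of size $\MF^{1/2}$'') yields
\[
  \big|\lprod{\mubh}{\ub(t^n)-\Ub(t^n)}{\Ghn}\big|
  \le \nrm{\ub(t^n)-\Ub(t^n)}{\Ghn}\,\nrm{\mubh}{\Ghn}
  \leqc \MF^{1/2}\nrm{\ub}{\Hb{2}{}{\O(t^n)}}\cdot h^{-1/2}\tnrms{\mubh},
\]
which depends on $\mubh$ (and even carries a bad $h^{-1/2}$). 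Likewise, estimating $\TT_7$ ``identically through $j_h^n$'' gives $\gsl j_h^n(\lamb,\lamb)^{1/2}j_h^n(\mubh,\mubh)^{1/2}$, again a $\tnrms{\mubh}$-term. Neither fits into the stated inequality, so the lemma cannot be closed this way.

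The paper eliminates both terms outright by exploiting the freedom in the choice of extension. For $\TT_4$ one picks the extension of $\ub$ so that $\Ex\ub=\Ub$ in the $\delta_h$-strip around $\G(t^n)$; since $\Ub$ is constant and $\Ghn$ lies in that strip, $\ub(t^n)-\Ub(t^n)=0$ on $\Ghn$ too, and $\TT_4=0$. For $\TT_7$ one identifies $\lamb$ in the bulk with the function that equals $\lamb$ on the interface and is \emph{constant in the normal direction} $\nb$; then $\nb\cdot\nabla\lamb=0$ pointwise and $j_h^n(\lamb,\cdot)=0$. Without these two observations the $\mubh$-free bound in the statement is not attainable.
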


\begin{proof}
The proof follows similar lines as~\cite[Lemma~5.11]{LO19}.

For the time derivative term we have with a change of variable, that
\begin{align*}
  \vert \TT_1 \vert
  &= \Big\vert -\int_{\Ohn}\int_{t^{n-1}}^{t^n}\frac{t-t^n}{\dt}
      \ub_{tt}(t)\dif t \cdot \vbh\dif \xb + \lprod{\ub_t(t^n)}{v^n}{\Ohn}
    - \lprod{\ub_t(t^n)}{v^n_\ell}{\O(t^n)}\Big\vert\\
  &\leq \frac{1}{2}\dt\nrm{\ub}{\Wb{2,\infty}{\QQ}}\nrm{\vbh}{\Ohn}
    + \vert \lprod{\ub_t(t^n) - \ub_t(t^n)\cphin}{\vbh}{\Ohn}
    \vert\\
  &\leqc (\dt + \MF(\EU[n], \dt)^\frac{1}{2})
    \nrm{\ub}{\Wb{2,\infty}{\QQ}}\nrm{\vbh}{\Ohn}.
\end{align*}
In the final step, we have used
\begin{equation*}
  \vert \ub_t(x,t^n) - (\ub_t\cphin)(x,t^n)\vert
  \leq \nrm{\nabla\ub_t}{\Lb{\infty}{}{\Od{\O(t^n)}}}\vert x - \Phi^n(x)\vert
\end{equation*}
and \cref{lemma.deform-mapping}. See also~\cite[Lemma~5.11]{LO19}.

For the diffusion term $\TT_2$, it follows analogously from the
differentiation chain rule, \cref{lemma.deform.est:eqiv} and
\cref{lemma.deform-est} that, see, e.g.~\cite[Lemma~7.4]{GOR15}
\begin{align*}
  \vert \TT_2 \vert
  &= \begin{multlined}[t]
    \vert\lprod{\nabla(\ub(t^n)-\ub(t^n)\cphin)}{J(D\Phi^n)^{-1})\nabla\vbh}{\Ohn}\\
    + \lprod{\nabla\ub(t^n)}{(I-J(D\Phi^n)^{-1})\nabla\vbh}{\Ohn}\vert
  \end{multlined}\\
  &\leq \MF(\EU[n],\dt)^\frac{1}{2}\nrm{\ub(t^n)}{\Hb{2}{}{\O(t^n)}}
    \nrm{\nabla\vbh}{\Ohn}.
\end{align*}

For the first Lagrange-multiplier term, we similarly find by additionally using
a trace and the Poincaré inequality that
\begin{align*}
  \vert \TT_3 \vert
  &= \vert \lprod{\lamb(t^n)- \lamb(t^n)\cphin}{J\vbh}{\Ghn}
    - \lprod{\lamb(t^n)}{(1-J)\vbh}{\Ghn} \vert\\
  &\leqc \MF(\EU[n], \dt)^\frac{1}{2}\nrm{\ub(t^n)}{\Hb{3}{}{\O(t^n)}}
    \nrm{\nabla\vbh}{\Ohn}.
\end{align*}

For the boundary condition term, we first note that due to $\Ub\in\R^d$, we can
identify the extension as the constant extension. Furthermore, we can choose
the extension of the bulk velocity, such that $\Ex\ub = \Ub$ in the
$\delta_h$-strip around $\G(t^n)$ and $\Ex\ub=\ub$ outside of a
$2\cdot\delta_h$-strip around $\G(t^n)$, with $\Ex\ub$ sufficiently smooth in
$\Odh{\Ohn}$. As a result, we have that
\begin{align*}
  \vert \TT_4 \vert
  &= \vert \lprod{J\mubh}{\ub(t^n)\cphin - \Ub(t^n)\cphin}{\Ghn}
    - \lprod{\mubh}{\ub(t^n) - \Ub(t^n)}{\Ghn}\vert = 0.
\end{align*}

The interface-velocity consistency error is bounded similar to $\TT_1$.
However, the situation is simpler here because $\Ub\in\R^d$ does not
depend on the domain consistency. Therefore,
\begin{align*}
  \vert \TT_5 \vert
    = \Big\vert \frac{1}{\dt}\lprod{\Ub(t^n) - \Ub(t^{n-1})}{\Vbb_1}{2}
    - \lprod{\frac{\dif}{\dif t} \Ub(t^n)}{\Vbb_1}{2}\Big\vert
    \leqc \dt\nrm{\frac{\dif{}^2}{\dif t^2} \Ub}{\infty, [t^{n-1}, t^n]}
      \nrm{\Vbb_1}{2}.
\end{align*}

Similarly for the interface position, we with $\frac{\dif}{\dif t}\C = \Ub$
that
\begin{align*}
  \vert \TT_6 \vert
    = \Big\vert \frac{1}{\dt}\lprod{\C(t^n) - \C(t^{n-1})}{\Vbb_2}{2}
    - \lprod{\frac{\dif}{\dif t} \C(t^n)}{\Vbb_2}{2}\Big\vert
    \leqc \dt\nrm{\frac{\dif}{\dif t} \Ub}{\infty, [t^{n-1}, t^n]}
      \nrm{\Vbb_2}{2}.
\end{align*}

Finally, for the ghost penalty consistency error, we use for
$\ub\in\Hb{k+1}{}{\OdT{n}}$ the consistency estimate,
see~\cite[Lemma~5.8]{LO19},
\begin{equation}\label{eqn.ghost-penalty-consistency}
  i_h^n(\ub,\ub)\leqc h^{2k}\nrm{\ub}{\Hb{k+1}{}{\OdT{n}}}^2.
\end{equation}
As a result, we have using the Cauchy-Schwarz inequality
\begin{align*}
 \vert \TT_7 \vert
   \leqc i_h^n(\ub,\ub)^{\frac{1}{2}}i_h^n(\vbh,\vbh)^{\frac{1}{2}}
   &\leqc h^k\nrm{\ub}{\Hb{k+1}{}{\OdT{n}}}i_h^n(\vbh,\vbh)^{\frac{1}{2}}\\
   &\leqc h^k\nrm{\ub}{\Hb{k+1}{}{\O(t^n)}}i_h^n(\vbh,\vbh)^{\frac{1}{2}}
\end{align*}
where we used \eqref{eqn.eulerian:ExtentionExtimate} in the last estimate.

To the Lagrange-multiplier stabilisation form, we use that we identify 
$\lamb$ in the bulk with the function which is equal to $\lamb$ in the
interface and which is constant in the normal direction $\nb$. Thus the 
stabilisation is fully consistent and vanishes.

\end{proof}

\subsection{Error estimate in the energy norm}
\label{sec.discrete:subsec.err-est}

We consider stable interpolation operators $\Iu,\Il$ for the bulk velocity
and the Lagrange-multiplier spaces $\Vbh,\Nbn$, respectively. For 
$k_s=1,\dots,k$, $\ub\in\Hb{k_s+1}{}{\O(t^n)}$ and
$\lamb\in\Hb{k_s-1/2}{}{\G(t^n)}$, it then holds that
\begin{multline}
    \nrm{\widetilde\ub - \Iu\widetilde\ub}{\OdT{n}}
    + h\nrm{\nabla(\widetilde\ub - \Iu\widetilde\ub)}{\OdT{n}}
    + h^{\frac{1}{2}}\nrm{\widetilde\ub - \Iu\widetilde\ub}{\Ghn}\\
    \leqc{}h^{k_s + 1}\nrm{\ub}{\Hb{k_s+1}{}{\O(t^n)}}
    \label{eqn.interpolation:velocity}
\end{multline}
\begin{multline}
  \nrm{\widetilde\lamb - \Il\widetilde\lamb}{\OGTn} 
    + h\nrm{\nabla(\widetilde\lamb - \Il\widetilde\lamb)}{\OGTn}
    + h^{\frac{1}{2}}\nrm{\widetilde\lamb - \Il\widetilde\lamb}{\Ghn}\\
    \leqc h^{k_s}\nrm{\lamb}{\Hb{k_s - 1/2}{}{\G(t^n)}},
   \label{eqn.interpolation:lagrange}
\end{multline}
with sufficiently smooth extensions $\widetilde\ub\in\Hb{k+1}{}{\OdT{n}}$ 
and $ \widetilde\lamb\in\Hb{k}{}{\OGTn}$ for which it holds that
$\restr{\widetilde\ub}{\O(t^n)} = \ub$ and $\widetilde\lamb \big|_{\Ghn} 
= \lamb$. The existence of $\widetilde{\lamb}$ is given by
the trace theorem and we have $\nrm{\widetilde\lamb}{\Hb{k}{}{\OGTn}}\leqc
\nrm{\lamb}{\Hb{k-1/2}{}{\Ghn}}$, see~\cite{FL17} for further details, and
note that the necessary assumptions are given by the assumptions of our mesh
and the smoothness of the level set function. As in the previous section, we
can for example identify $\lamb$ in the bulk with the function which is equal
to $\lamb$ in the interface and which is constant in the normal direction
$\nb$.

Let $\ubIn\coloneqq \Iu\ub$ and $\lambIn\coloneqq \Il\lamb$.
We split the bulk velocity and the Lagrange-multiplier errors into an
interpolation and a discretisation error
\begin{equation*}
  \Eu[n] =  \underbrace{(\ub(t^n) - \ubIn)}_{\errIu[n]}
          + \underbrace{(\ubIn - \ubhn)}_{\erru[n]\in\Vbhn}
  \qquad\text{and}\qquad
  \El[n] =  \underbrace{(\lamb(t^n) - \lambIn)}_{\errIl[n]}
          + \underbrace{(\lambIn - \lambhn)}_{\errl[n]\in\Nbhn}.
\end{equation*}
Note that we do not need to split the surface velocity error, since
$\EU[n]\in\R^d$ is already finite dimensional. Applying this split in
\eqref{eqn.consistecy-error} yields
\begin{multline}\label{eqn.discretisation-error}
  \frac{1}{\dt}\lprod{\erru[n] - \erru[n-1]}{\vbh}{\Ohn}^2
    + \lprod{\nabla\erru[n]}{\nabla\vbh}{\Ohn}^2
    + \gsu i_h^n(\erru[n], \vbh)
    + \lprod{\errl[n]}{\vbh}{\Ghn}\\
    + \lprod{\mubh}{\erru[n] - \EU[n]}{\Ghn}
    - \gsl j_h^n(\errl[n], \mubh)
    + \frac{1}{\dt}\lprod{\EU[n] - \EU[n-1]}{\Vbb_1}{2}
    + \lprod{\Fbhn - \Fb(t^n)}{\Vbb_1}{2}\\
  + \frac{1}{\dt}\lprod{\EC[n] - \EC[n-1]}{\Vbb_2}{2}
    - \lprod{\EU[n]}{\Vbb_2}{2}
    = \Ecn(\vbh, \mubh, \Vbb_1, \Vbb_2) + \EIn(\vbh, \mubh),
\end{multline}
for all $(\vbh,\mubh,\Vbb_1,\Vbb_2 )\in\Vbhn\times\Nbhn\times\R^d\times\R^d$,
with the interpolation term
\begin{equation*}
  \EIn(\vbh, \mubh) =
  \begin{multlined}[t]
    - \underbrace{\frac{1}{\dt}\lprod{\errIu[n] 
                                      - \errIu[n-1]}{\vbh}{\Ohn}^2}_{\TT_9}
    - \underbrace{\lprod{\nabla\errIu[n]}{\nabla\vbh}{\Ohn}^2}_{\TT_{10}}
    - \underbrace{\gsu i_h^n(\errIu[n], \vbh)}_{\TT_{11}}\\
    - \underbrace{\lprod{\errIl[n]}{\vbh}{\Ghn}}_{\TT_{12}}
    - \underbrace{\lprod{\mubh}{\errIu[n]}{\Ghn}}_{\TT_{13}}
    + \underbrace{\gsl j_h^n(\errIl[n], \mubh)}_{\TT_{14}}.
  \end{multlined}
\end{equation*}

\begin{lemma}[Interpolation estimate]
\label{lemma.interpolation-error-est}
Let $\ub\in\pazocal{L}^\infty(0,\tend,\Hb{k+1}{}{\O(t)})$ and $\ub_t\in
\pazocal{L}^\infty(0,\tend,\Hb{k}{}{\O(t)})$. Then the interpolation error
can be bounded by
\begin{align*}
  \vert \EIn(\vbh,\mubh)\vert \leqc h^k K^\frac{1}{2}\!\sup_{t\in[0,\tend]}\!
    \Big( \nrm{\ub}{\Hb{k+1}{}{\O(t)}} + \nrm{\ub_t}{\Hb{k}{}{\O(t)}}\Big)
    \tnrms[n]{\vbh}&\\
    + h^k \!\sup_{t\in[0,\tend]}\nrm{\ub}{\Hb{k+1}{}{\O(t)}}\tnrms[n]{\mubh}&.
\end{align*}
\end{lemma}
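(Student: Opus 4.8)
The plan is to bound the six contributions $\TT_8,\dots,\TT_{13}$ of $\EIn$ individually. In each I apply a Cauchy--Schwarz inequality in the relevant volume, surface or stabilisation inner product, estimate the interpolation-error factor by \eqref{eqn.interpolation:velocity} or \eqref{eqn.interpolation:lagrange} with $k_s=k$, and absorb the remaining test-function factor into $\tnrms[n]{\vbh}$ or $\tnrms[n]{\mubh}$. A reduction I use throughout is that, since $\lamb=-\partial_{\nb}\ub$, the smooth bulk representative $\widetilde\lamb$ (equal to $\lamb$ on $\Ghn$ and constant in the normal direction) satisfies $\nrm{\widetilde\lamb}{\Hb{k}{}{\O(t^n)}}\leqc\nrm{\ub}{\Hb{k+1}{}{\O(t^n)}}$, using the smoothness of $\nb=-\nabla\phi/\nrm{\nabla\phi}{2}$. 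This lets me re-express every Lagrange-multiplier contribution through $\nrm{\ub}{\Hb{k+1}{}{\O(t^n)}}$, as the statement requires.

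The purely volumetric term $\TT_9$ follows at once from Cauchy--Schwarz and the gradient part of \eqref{eqn.interpolation:velocity}, giving $h^k\nrm{\ub}{\Hb{k+1}{}{\O(t^n)}}\tnrms[n]{\vbh}$. For the surface pairings $\TT_{11}$ and $\TT_{12}$ I split the powers of $h$ so that the trace parts $\nrm{h^{-1/2}\vbh}{\Ghn}$ and $\nrm{h^{1/2}\mubh}{\Ghn}$ of the triple norms are produced: writing $\nrm{\vbh}{\Ghn}=h^{1/2}\nrm{h^{-1/2}\vbh}{\Ghn}$ in $\TT_{11}$ and $\nrm{\mubh}{\Ghn}=h^{-1/2}\nrm{h^{1/2}\mubh}{\Ghn}$ in $\TT_{12}$, the trace estimates in \eqref{eqn.interpolation:lagrange} and \eqref{eqn.interpolation:velocity} each leave exactly $h^k$. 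The stabilisation term $\TT_{13}$ is handled by Cauchy--Schwarz for $j_h^n$ (the fixed parameter $\gsl$ being absorbed into $\leqc$), using $j_h^n(\mubh,\mubh)^{1/2}=\nrm{h\nb\cdot\nabla\mubh}{\OGTn}\leq\tnrms[n]{\mubh}$ and the gradient part of \eqref{eqn.interpolation:lagrange} to get $j_h^n(\errIl[n],\errIl[n])^{1/2}\leqc h^k\nrm{\widetilde\lamb}{\Hb{k}{}{\O(t^n)}}$.

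The two terms carrying the structural content are $\TT_{10}$ and $\TT_8$. In $\TT_{10}$ I apply Cauchy--Schwarz to $i_h^n$, bound $i_h^n(\errIu[n],\errIu[n])^{1/2}\leqc h^k\nrm{\ub}{\Hb{k+1}{}{\O(t^n)}}$ via the ghost-penalty consistency estimate \eqref{eqn.ghost-penalty-consistency} together with the approximation properties of $\Iu$, and crucially use \autoref{lemma.ghost-penalty} in the form $i_h^n(\vbh,\vbh)^{1/2}\leqc K^{-1/2}\tnrms[n]{\vbh}$. With $\gsu=\gamma_s K$ the two powers of $K$ combine to the factor $h^kK^{1/2}$ appearing in the claim. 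For $\TT_8$ I first rewrite the discrete time difference as a time average, exploiting that $\Iu$ acts spatially on the time-independent background mesh and hence commutes with $\partial_t$, so that $\dt^{-1}(\errIu[n]-\errIu[n-1])=\dt^{-1}\int_{t^{n-1}}^{t^n}(\ub_t-\Iu\ub_t)\dif s$. Taking $L^2(\Ohn)$ norms, bounding them by $L^2(\OdT{n})$ norms since $\Ohn\subset\OdT{n}$, and applying \eqref{eqn.interpolation:velocity} to $\ub_t$ with $k_s=k-1$ yields $\nrm{\dt^{-1}(\errIu[n]-\errIu[n-1])}{\Ohn}\leqc h^k\sup_{t}\nrm{\ub_t}{\Hb{k}{}{\O(t)}}$.

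I expect the main obstacle to be the final step of $\TT_8$: the triple norm contains no volumetric $L^2$ term, whereas Cauchy--Schwarz leaves only $\nrm{\vbh}{\Ohn}$ on the test side. I would close this gap with a discrete Poincaré--Friedrichs inequality $\nrm{\vbh}{\Ohn}\leqc\tnrms[n]{\vbh}$, which is available because $\tnrms[n]{\vbh}$ controls both $\nrm{\nabla\vbh}{\Ohn}$ and, through $\nrm{\vbh}{\Ghn}=h^{1/2}\nrm{h^{-1/2}\vbh}{\Ghn}$ for small $h$, an interface trace, and because of the homogeneous condition on $\Gout$; justifying this inequality on the unfitted active domain is the only genuinely non-routine point. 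Collecting the six bounds, using $K\geq1$ so that the bare $h^k$ contributions of $\TT_8$, $\TT_9$ and $\TT_{11}$ are dominated by $h^kK^{1/2}$, and applying the reduction $\nrm{\widetilde\lamb}{\Hb{k}{}{\O(t^n)}}\leqc\nrm{\ub}{\Hb{k+1}{}{\O(t^n)}}$, then assembles exactly the asserted estimate, with the first group multiplying $\tnrms[n]{\vbh}$ and the $\TT_{12}$, $\TT_{13}$ contributions multiplying $\tnrms[n]{\mubh}$.
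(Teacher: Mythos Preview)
Your proposal is correct and follows the same line as the paper's proof. The paper treats $\TT_{11}$, $\TT_{12}$, $\TT_{13}$ exactly as you do and simply cites \cite[Lemma~5.12]{LO19} for the combined bound on $\TT_8+\TT_9+\TT_{10}$, whereas you unpack these three terms explicitly; your handling of $\TT_{10}$ via \eqref{eqn.ghost-penalty-consistency} and \autoref{lemma.ghost-penalty}, and of $\TT_8$ via the time-integral rewriting together with a Poincar\'e inequality (available since $\vbh$ vanishes on $\Gout\subset\partial\Ohn$), is precisely what underlies that cited result.
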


\begin{proof}
The bound
\begin{equation*}
  \vert \TT_9 + \TT_{10} + \TT_{11} \vert \leqc h^k K^\frac{1}{2}\!
    \sup_{t\in[0,\tend]}\!\Big( \nrm{\ub}{\Hb{k+1}{}{\O(t)}} 
    + \nrm{\ub_t}{\Hb{k}{}{\O(t)}}\Big)\tnrms[n]{\vbh}
\end{equation*}
is shown in~\cite[Lemma~5.12]{LO19}. We therefore only need to deal with the
boundary terms.

Using the Cauchy-Schwarz inequality, and since the Lagrange-multiplier
is the normal derivative of the velocity, we find with the stability of the 
extension that
\begin{align*}
  \vert\TT_{12}\vert 
    \leq h^{\frac{1}{2}}\nrm{\errIl[n]}{\Ghn} h^{-\frac{1}{2}}\nrm{\vbh}{\Ghn}
    &\leqc h^k\nrm{\partial_n\ub}{\Hb{k-1/2}{}{\G(t^n)}}
      h^{-\frac{1}{2}}\nrm{\vbh}{\Ghn}\\
    &\leqc h^k\nrm{\ub}{\Hb{k+1}{}{\O(t^n)}}\tnrms[n]{\vbh}.
\end{align*}
Similarly, we have
\begin{align*}
  \vert\TT_{13}\vert
    \leq h^{-\frac{1}{2}}\nrm{\errIu[n]}{\Ghn} h^{\frac{1}{2}}\nrm{\mubh}{\Ghn}
    &\leqc h^k\nrm{\ub}{\Hb{k+1}{}{\OdT{n}}} \nrm{h^{\frac{1}{2}}\mubh}{\Ghn}\\
    &\leqc h^k\nrm{\ub}{\Hb{k+1}{}{\O(t^n)}}\tnrms[n]{\mubh}.
\end{align*}
For the Lagrange-multiplier stabilising form, we again use the Cauchy-Schwarz
inequality and the interpolation estimate \eqref{eqn.interpolation:lagrange}.
This results in
\begin{equation*}
  \vert \TT_{14} \vert 
    \leq j_h^n(\errIl[n],\errIl[n])^\frac{1}{2}j_h^n(\mubh,\mubh)^\frac{1}{2}
    \leqc h^k\nrm{\ub}{\Hb{k+1}{}{\O(t^n)}}\nrm{h\nb\cdot\nabla\mubh}{\OGTn}.
\end{equation*}
The claim then follows by the triangle inequality and summing up the above 
estimates.
\end{proof}

\begin{theorem}[Energy error estimate]
\label{theorem.error}
Let $\{(\ubh^m,\Ubh^m)\}_{m=1}^N$ be the solution to the discrete problem
\eqref{eqn.discrete-form}. We assume that assumptions
\ref{assumption.discrete-velocity}, \ref{assump.eulerian:extension} and
\ref{assumption.ghost-penalty:strip-width} hold, assume $\gamma_s$ in
\eqref{eqn.ghost-penalty-parameter} is sufficiently large, the time step
$\dt$ is sufficiently small and the exact solution fulfils the regularity 
$\ub\in \Wb{3,\infty}{\QQ}\cap\Lb{\infty}{}{0,\tend,\Hb{k+1}{}{\O(t)}}$, 
$\ub_t\in \Lb{\infty}{}{0,\tend,\Hb{k}{}{\O(t)}}$ and 
$\Ub_{tt}\in\Lb{\infty}{}{0,\tend,\R^d}$. Then for $m=1,\dots,N$, and the 
errors defined in \eqref{eqn.definition-errors}, the following error estimate
holds:
\begin{multline*}
  \nrm{\Eu[m]}{\Oh^m}^2 + \nrm{\EU[m]}{2}^2 + \nrm{\EC[m]}{2}^2
    + \frac{\dt \cthm[a]{theorem.error}}{2} 
    \sum_{n=1}^m\Big[\tnrms[m]{\Eu[n]}^2 + j_h^n(\El[n],\El[n])\Big]\\
  \leqc
    \exp\left(\frac{\cthm[b]{theorem.error}}{1-\dt\cthm[b]{theorem.error}}
    t^m \right)(\dt^2 + h^{2k}K + h^{2k-1} + h^{2k}\dt^{-1})R(\ub,\Ub),
\end{multline*}
with $R(\ub,\Ub) \coloneqq \nrm{\ub}{\Wb{3,\infty}{\QQ}}^2 
+ \sup_{t\in[0,\tend]} \Big( \nrm{\ub}{\Hb{k+1}{}{\O(t)}}^2 
+ \nrm{\ub_t}{\Hb{k}{}{\O(t)}}^2 + \nrm{\Ub_{tt}}{2}^2 
+ \vert\G\vert\Big[\nrm{\ub}{\Hb{3}{}{\O(t)}}^2 
+ \nrm{\ub}{\Hb{k+1}{}{\O(t)}}^2 \Big] \Big)$, and constants
$\cthm[a]{theorem.error}, \cthm[b]{theorem.error} >0$ independent of $n,\dt$
and the mesh-interface cut topology.
\end{theorem}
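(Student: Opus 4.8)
The plan is to transfer the energy argument of \autoref{thm.discrete-stability} from the discrete equation to the error equation \eqref{eqn.discretisation-error}. I would test \eqref{eqn.discretisation-error} with the discretisation errors $(\vbh,\mubh,\Vbbh)=2\dt(\erru[n],-\errl[n],\EU[n])$. The two discrete time-derivative terms are opened with the BDF1 polarisation identity, giving the telescoping contributions $\nrm{\erru[n]}{\Ohn}^2+\nrm{\erru[n]-\erru[n-1]}{\Ohn}^2-\nrm{\erru[n-1]}{\Ohn}^2$ and the analogous expression in $\EU[n]$. The symmetric diffusion and ghost-penalty terms produce $2\dt\big(\nrm{\nabla\erru[n]}{\Ohn}^2+\gsu i_h^n(\erru[n],\erru[n])\big)$, which controls $\nrm{\nabla\erru[n]}{\OdT{n}}^2$ via \autoref{lemma.ghost-penalty} once $\gamma_s\gtrsim K$, while $-\gsl j_h^n(\errl[n],-\errl[n])=\gsl j_h^n(\errl[n],\errl[n])$ contributes the Lagrange-multiplier stabilisation on the left.

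The new ingredient, absent from the prescribed-motion analysis of \cite{LO19}, is the interface coupling. The Lagrange term $\lprod{\errl[n]}{\erru[n]}{\Ghn}$ and the constraint term $\lprod{-\errl[n]}{\erru[n]-\EU[n]}{\Ghn}$ collapse to $\lprod{\errl[n]}{\EU[n]}{\Ghn}$, and since $\EU[n]\in\R^d$ is constant in space this is $\big(\int_{\Ghn}\errl[n]\dif s\big)\cdot\EU[n]$. Using the identity $\lprod{\lambhn}{\Ubhn}{\Ghn}=\lprod{\Fbhn}{\Ubhn}{2}$ from the stability proof (equivalently $\Fbhn=\int_{\Ghn}\lambhn\dif s$) and $\Fb(t^n)=\int_{\G(t^n)}\lamb(t^n)\dif s$ (from $\lamb=-\partial_{\nb}\ub$), this combines with the force term $\lprod{\Fbhn-\Fb(t^n)}{\EU[n]}{2}$ to leave $\big(\int_{\Ghn}\lambIn\dif s-\int_{\G(t^n)}\lamb(t^n)\dif s\big)\cdot\EU[n]$, i.e. only an interpolation error of $\lamb$ over $\Ghn$, bounded through \eqref{eqn.interpolation:lagrange} by $h^{k-1/2}$, plus a domain error from integrating over $\Ghn$ rather than $\G(t^n)$, bounded by $\MF(\EU[n],\dt)^{1/2}$ through \autoref{lemma.deform-est}. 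A weighted Young's inequality turns these into the $h^{2k-1}$ and $\MF$ data contributions of the statement together with a small multiple of $\nrm{\EU[n]}{2}^2$; this is where the $\vert\G\vert\nrm{\ub}{\Hb{3}{}{\O(t)}}^2$ term in $R(\ub,\Ub)$ originates.

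It remains to estimate the right-hand side $\Ecn+\EIn$ with \autoref{lemma.consistency-error-est} and \autoref{lemma.interpolation-error-est} and to split every contribution by weighted Young inequalities into absorbable quantities $\varepsilon\tnrms[n]{\erru[n]}^2$, $\varepsilon j_h^n(\errl[n],\errl[n])$ and data terms of orders $\dt^2$, $h^{2k}K$, $h^{2k-1}$ and $h^{2k}\dt^{-1}$. The boundary terms carrying $\nrm{h^{1/2}\errl[n]}{\Ghn}$ (from $\TT_{12}$ and the interpolation part of the coupling) are not controlled by the energy test alone, so I would bound $\tnrms[n]{\errl[n]}$ by the auxiliary inf-sup estimate \autoref{lemma.stationary-lagrange.solvability} applied to the error equation, whose residual (once the time-derivative, force and data terms are moved to the right) consists of quantities already estimated. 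The previous-step term $\nrm{\erru[n-1]}{\Ohn}^2$ is transferred from $\Ohn$ to $\Oh^{n-1}$ exactly as in \autoref{thm.discrete-stability}, via \cite[Lemma~5.7]{LO19}, at the cost of a factor $(1+c\dt)$ and extra gradient and ghost-penalty terms that are absorbed on the left. Summing over $n=1,\dots,m$ telescopes the time-derivative terms and collects the data with the factor $t^m$.

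The main obstacle is the circular dependence created by the domain: the bounds for both $\Ecn$ and the coupling carry $\MF(\EU[n],\dt)=\dt^2\sum_{j=0}^n\nrm{\EU[j]}{2}^2+\dt^3t^n\nrm{\partial_t\Ub}{\infty}^2$, which is itself governed by the interface-velocity errors the theorem is trying to bound, and the consistency term $\TT_5$ together with the coupling produce contributions $c\,\dt\sum_{n}\nrm{\EU[n]}{2}^2$ that cannot be absorbed on the left, where only the telescoped $\nrm{\EU[m]}{2}^2$ survives. These, together with the $c\,\dt\sum_n\nrm{\erru[n]}{\Ohn}^2$ from the previous-step transfer, are folded into the discrete Gronwall lemma \cite[Lemma~5.1]{HR90}, which is what produces the exponential factor. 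The current-step pieces ($n=m$) must first be absorbed into the telescoped $\nrm{\EU[m]}{2}^2$ and $\nrm{\erru[m]}{\Oh^m}^2$, which is possible only for $\dt$ sufficiently small and is exactly what yields the denominator $1-\dt\cthm[b]{theorem.error}$ in the exponent. A final application of the triangle inequality together with \eqref{eqn.interpolation:velocity}--\eqref{eqn.interpolation:lagrange} converts the discretisation errors $\erru[n],\errl[n]$ back into the full errors $\Eu[n],\El[n]$ of the statement.
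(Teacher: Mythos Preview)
Your proposal is correct and follows essentially the same route as the paper: test the error equation with $2\dt(\erru[n],-\errl[n],\EU[n])$, use the BDF1 polarisation identity, observe that the coupling terms collapse to the mixed interpolation/domain residual $\lprod{\lamb(t^n)}{\EU[n]}{\G(t^n)}-\lprod{\lambIn}{\EU[n]}{\Ghn}$ (the paper calls this $\Ern(\EU[n])$), bound it by $h^{k-1/2}+\MF^{1/2}$, transfer $\nrm{\erru[n-1]}{\Ohn}^2$ via \cite[Lemma~5.7]{LO19}, control $\tnrms[n]{\errl[n]}$ through an inf-sup argument on the error equation (the paper uses \autoref{lemma.bad-inf-sup} rather than \autoref{lemma.stationary-lagrange.solvability}, but either works and this is where the $h^{2k}\dt^{-1}$ originates), sum, and close with the discrete Gronwall lemma under a small-$\dt$ assumption.
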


\begin{proof}
Testing \eqref{eqn.discretisation-error} with $(\vbh, \mubh, \Vbb_1, \Vbb_2) =
2\dt(\erru[n], -\errl[n], \EU[n], \EC[n])$ gives
\begin{multline}\label{eqn:err-eqn.tested}
  \nrm{\erru[n]}{\Ohn}^2 + \nrm{\erru[n] - \erru[n-1]}{\Ohn}^2
  - \nrm{\erru[n-1]}{\Ohn}^2 + 2\dt\nrm{\nabla\erru[n]}{\Ohn}^2
  + 2\dt\gsu i_h^n(\erru[n], \erru[n])\\
  + 2\dt \lprod{\errl[n]}{\EU[n]}{\Ghn} + 2\dt\gsl j_h^n(\errl[n],\errl[n])
  + \nrm{\EU[n]}{2}^2 + \nrm{\EU[n] - \EU[n-1]}{2}^2 - \nrm{\EU[n-1]}{2}^2\\
  + 2\dt\lprod{\Fbhn - \Fb(t^n)}{\EU[n]}{2}
  + \nrm{\EC[n]}{2}^2 + \nrm{\EC[n] - \EC[n-1]}{2}^2 - \nrm{\EC[n-1]}{2}^2
  - 2\dt \lprod{\EU[n]}{\EC[n]}{2}\\
  = 2\dt (\Ecn(\erru[n], -\errl[n], \EU[n], \EC[n]) + \EIn(\erru[n], -\errl[n])).
\end{multline}
Now, by definition, we have that $\Fbhn = \int_{\Ghn}\lambh\dif s$ and
$\Fb(t^n) = \int_{\G(t^n)}\lamb(t^n)\dif s$. Since $\EU[n]\in\R^d$ is constant
in space, we have
\begin{equation*}
  \lprod{\Fbhn - \Fb(t^n)}{\EU[n]}{2}
    = \lprod{\lambhn}{\EU[n]}{\Ghn} - \lprod{\lamb(t^n)}{\EU[n]}{\G(t^n)}.
\end{equation*}
As a result, the boundary integrals involving $\lambh$ on the left-hand side of
\eqref{eqn:err-eqn.tested} vanish, and we have an additional mixed
consistency/interpolation error term
\begin{equation*}
  \Ern(\EU[n]) \coloneqq
  \lprod{\lamb(t^n)}{\EU[n]}{\G(t^n)} - \lprod{\lambIn}{\EU[n]}{\Ghn}
\end{equation*}
on the right-hand side. This leads to the error equation
\begin{multline}\label{eqn:err-eqn.tested2}
  \nrm{\erru[n]}{\Ohn}^2 + \nrm{\erru[n] - \erru[n-1]}{\Ohn}^2
  - \nrm{\erru[n-1]}{\Ohn}^2 + 2\dt\nrm{\nabla\erru[n]}{\Ohn}^2\\
  + 2\dt\gsu i_h^n(\erru[n], \erru[n])
  + 2\dt\gsl j_h^n(\errl[n],\errl[n])\\
  + \nrm{\EU[n]}{2}^2 + \nrm{\EU[n] - \EU[n-1]}{2}^2 - \nrm{\EU[n-1]}{2}^2
  + \nrm{\EC[n]}{2}^2 + \nrm{\EC[n] - \EC[n-1]}{2}^2 - \nrm{\EC[n-1]}{2}^2\\
  = 2\dt [\Ecn(\erru[n], -\errl[n], \EU[n])
          + \EIn(\erru[n], -\errl[n]) + \Ern(\EU[n]) + \lprod{\EU[n]}{\EC[n]}{2}].
\end{multline}
We begin by deriving an estimate for the additional error term $\Ern(\EU[n])$.
For this, we split the error term into
\begin{equation*}
  \Ern(\EU[n]) = \lprod{\lamb - \lambI}{\EU[n]}{\G(t^n)}
    + \lprod{\lambI}{\EU[n]}{\G(t^n)} -\lprod{\lambI}{\EU[n]}{\Ghn}.
\end{equation*}
For the first term, we estimate using the Cauchy-Schwarz inequality, the fact 
that $\EU[n]\in\R^d$ and the interpolation estimate
\eqref{eqn.interpolation:lagrange}, that
\begin{align*}
  \lprod{\lamb - \lambI}{\EU[n]}{\G(t^n)}
  \leq \nrm{\errIl[n]}{\Ghn} \nrm{\EU[n]}{\Ghn}
  &\leqc h^{k-\frac{1}{2}}\nrm{\ub}{\Hb{k+1}{}{\O(t^n)}}
    \vert\G\vert^\frac{1}{2}\nrm{\EU[n]}{2}.
\end{align*}
Note that we loose half an order in $h$ here, by considering the interface
velocity error. This would be recovered, by the appropriate $h$-scaling of the
boundary term in the bulk-velocity norm. 

For the second term, we again use that $\EU[n]\in\R^d$, so that $\EU[n] = 
\EU[n]\cphininv$. As a result, we can use a change of variable,
\cref{lemma.deform-est} and the boundedness of the interpolation operator to
estimate
\begin{align*}
  \vert \lprod{\lambI}{\EU[n]}{\G(t^n)} -\lprod{\lambI}{\EU[n]}{\Ghn} \vert
  &= \lprod{\lambIn}{\EU[n]\cphininv}{\G(t^n)} - \lprod{\lambIn}{\EU[n]}{\Ghn}\\
  &= \lprod{\lambIn\cphin  - \lambIn}{J\EU[n]}{\Ghn}
    + \lprod{\lambIn(J-1)}{\EU[n]}{\Ghn}\\
  &\leqc \MF(\EU[n], \dt)^\frac{1}{2} \nrm{\ub}{\Hb{3}{}{\O(t^n)}}
      \vert\G\vert^\frac{1}{2}\nrm{\EU[n]}{2}.
\end{align*}
Using these estimates, together with \cref{lemma.ghost-penalty},
\cref{lemma.consistency-error-est} and
\cref{lemma.interpolation-error-est} in \eqref{eqn:err-eqn.tested2} gives
\begin{multline}\label{eqn:err-eqn.tested3}
  \nrm{\erru[n]}{\Ohn}^2 + \nrm{\erru[n] - \erru[n-1]}{\Ohn}^2
    + 2c_1\dt\tnrms[n]{\erru[n]}^2 + 2\gsl j_h^n(\errl[n],\errl[n])
    + \nrm{\EU[n]}{2}^2 + \nrm{\EC[n]}{2}^2\\
  \leq 
    \nrm{\erru[n-1]}{\Ohn}^2 + \nrm{\EU[n-1]}{2}^2 + \dt\nrm{\EU[n]}{2}^2
    + \dt\nrm{\EC[n]}{2}^2 + 2c\dt\TT_{15}
\end{multline}
with
\begin{multline*}
  \TT_{15} \coloneqq
    (\dt + h^kK^{\frac{1}{2}} + \MF^\frac{1}{2})R_1(\ub)\tnrms[n]{\erru[n]}
    +  h^k R_2(\ub)\tnrms[n]{\errl[n]}\\
    +  (\dt + h^{k-\frac{1}{2}} + \MF^\frac{1}{2})R_3(\ub,\Ub)\nrm{\EU[n]}{2}.
\end{multline*}
Here we have abbreviated $\MF = \MF(\EU[n], \dt)$ and the higher-order residual
terms are
\begin{align*}
  R_1(\ub) &\coloneqq \nrm{\ub}{\Wb{3,\infty}{\QQ}} + \sup_{t\in[0,\tend]}
    \Big( \nrm{\ub}{\Hb{k+1}{}{\O(t)}} + \nrm{\ub_t}{\Hb{k}{}{\O(t)}}\Big),\\
  R_2(\ub) &\coloneqq \sup_{t\in[0,\tend]}\nrm{\ub}{\Hb{k+1}{}{\O(t)}}\\
  R_3(\ub, \Ub) &\coloneqq \sup_{t\in[0,\tend]}\Big(
    \nrm{\Ub_{tt}}{2} + \nrm{\Ub_{t}}{2}
    + \vert\G\vert^\frac{1}{2} \big[\nrm{\ub}{\Hb{3}{}{\O(t)}}
    + \nrm{\ub}{\Hb{k+1}{}{\O(t)}} \big]
    \Big).
\end{align*}
As in \eqref{eqn.est-previous-vel-on-domain.discr} we estimate the first term
on the right-hand side of \eqref{eqn:err-eqn.tested3} by
\begin{equation*}
  \nrm{\erru[n-1]}{\Ohn}^2 \leq (1 + c'\dt)\nrm{\erru[n-1]}{\Oh^{n-1}}^2
  + \frac{1}{2}c_1\dt\tnrms[n-1]{\erru[n-1]}^2.
\end{equation*}
For the Lagrange-multiplier, we test \eqref{eqn.discretisation-error} with
$(\vbh,\mubh,\Vbb_1,\Vbb_2) = (\vbh,0,0,0)$ and use \cref{lemma.bad-inf-sup}
to get the estimate
\begin{equation*}
  \tnrms[n]{\errl[n]} \leqc 
    \frac{1}{\dt} \nrm{\erru[n] - \erru[n-1]}{\Ohn} + \tnrms{\erru[n]}
    + (\dt + h^k K^\frac{1}{2} + \MF^\frac{1}{2})R_1(\ub)
    + j_h^n(\errl[n], \errl[n])^\frac{1}{2}.
\end{equation*}
Then, we can estimate using the weighted Young's inequality
\begin{multline*}
  2c\dt \TT_{15} \leq
    \frac{1}{2}\nrm{\erru[n] - \erru[n-1]}{\Ohn}^2
      + c_1\dt\tnrms[n]{\erru[n]}^2
      + \dt\gsl j_h^n(\lambhn,\lambhn)
      + \dt\nrm{\EU[n]}{2}^2\\
    + \tilde{c}(2 + \dt R_2^2)\MF
      + c'' \dt^2 R(\ub,\Ub) (\tnrms[n]{\erru[n]}^2 + \nrm{\EU[n]}{2}^2)\\
    + c_4 \dt (\dt^2 + h^{2k}K + h^{2k-1} + h^{2k}\dt^{-1})R(\ub,\Ub).
\end{multline*}
Inserting these estimates in \eqref{eqn:err-eqn.tested3} then gives
\begin{multline}\label{eqn:err-eqn.tested4}
  \nrm{\erru[n]}{\Ohn}^2 \myplus \frac{1}{2}\nrm{\erru[n] - \erru[n-1]}{\Ohn}^2
    \myplus c_1\dt\tnrms[n]{\erru[n]}^2 \myplus \dt\gsl j_h^n(\errl[n], \errl[n])
    \myplus \nrm{\EU[n]}{2}^2 \myplus \nrm{\EC[n]}{2}^2\\
  \leq 
    (1 + c'\dt)\nrm{\erru[n-1]}{\Oh^{n-1}} 
    + \frac{c_1\dt}{2}\tnrms[n-1]{\erru[n-1]}
    + \nrm{\EU[n-1]}{2}^2 + \nrm{\EC[n-1]}{2}^2\\
    + 2 \dt \nrm{\EU[n]}{2}^2 + \dt \nrm{\EC[n]}{2}^2
    + c'' \dt^2 R(\ub,\Ub) (\tnrms[n]{\erru[n]}^2 + \nrm{\EU[n]}{2}^2)\\
    + c'''(2 + \dt R_2^2)\MF
    + c_4 \dt (\dt^2 + h^{2k}K + h^{2k-1} + h^{2k}\dt^{-1})R(\ub,\Ub).
\end{multline}

Now, the goal is to sum this over $n=1,\dots,m$ in order to use a discrete 
Gronwall lemma. To this end, we first note that
\begin{align*}
  \sum_{n=1}^m\MF(\EU[n],\dt) 
  &= \sum_{n=1}^{m}\bigg[
    \dt^2\sum_{j=0}^n \nrm{\EU[j]}{2}^2+\dt^3t^n\nrm{\partial_t \Ub }{\infty}^2
    \bigg]\\
  &\leq t^m \dt \sum_{n=1}^m\nrm{\EU[n]}{2}^2 
    + \dt^2(t^m)^2\nrm{\Ub_t}{\infty}^2.
\end{align*}
Summing \eqref{eqn:err-eqn.tested4} over $n=1,\dots,m$, and using that
$\erru[0]=\errl[0]=\EU[0]=\EC[0]=0$, then gives
\begin{multline*}
  \nrm{\erru[m]}{\Oh^m}^2 + \nrm{\EU[m]}{2}^2 + \nrm{\EC[m]}{2}^2
    + \dt\big(\frac{c_1}{2} - c'''\dt R(\ub,\Ub)\big)
      \sum_{n=1}^{m}\tnrms[n]{\erru[n]}^2 
    + \dt \sum_{n=1}^{m} \gsl j_h^n(\errl[n], \errl[n])\\
  \leq c'\dt\sum_{n=1}^{m-1}\nrm{\erru[n]}{\Ohn}^2
    + \dt \big[2 + t^m c''(2 + \dt R_2^2) + c'''\dt R(\ub,\Ub)\big]
    \sum_{i=1}^{m}\nrm{\EU[n]}{2}^2\\
  + \dt\sum_{n=1}^{m}\nrm{\EC[n]}{2}^2
    + c_4 (\dt^2 + h^{2k}K + h^{2k-1} + h^{2k}\dt^{-1})R(\ub,\Ub),
\end{multline*}
with $c_4>0$ independent of $\dt$ and $m$. We now take $\dt$ to be
sufficiently small, such that $(c_1/2 - c'''\dt R(\ub,\Ub) \geq c_1/4$, and
$\dt (2 + t^m c''(2 + \dt R_2^2) + c'''\dt R(\ub,\Ub)) 
\eqqcolon \dt\cthm[b]{theorem.error} < 1$.
Under this time step restriction, we can then apply a
discrete Gronwall's Lemma~\cite[Lemma~5.1]{HR90} to get the estimate
\begin{multline*}
  \nrm{\erru[m]}{\Oh^m}^2 + \nrm{\EU[m]}{2}^2 + \nrm{\EC[m]}{2}^2
    + \dt \cthm[a]{theorem.error} 
    \sum_{n=1}^{m}\Big[\tnrms[n]{\erru[n]}^2 + j_h^n(\lambhn,\lambhn)\Big]\\
  \leqc 
    \exp\left(\frac{\cthm[b]{theorem.error}}{1-\dt\cthm[b]{theorem.error}} 
    t^m\right)(\dt^2 + h^{2k}K + h^{2k-1} + h^{2k}\dt^{-1})R(\ub,\Ub),
\end{multline*}
with $\cthm[a]{theorem.error}\coloneqq\max\{c_1/4, \gsl\}$. The claim then
follows by the triangle inequality and the optimal interpolation properties. 
\end{proof}

We note that the $h^{2k}/\dt$ scaling also appears in the error estimate in
\cite{vWRL20} for the transient Stokes problem on a moving domain, also as a
result of the use of an inf-sup result, in the latter case for the pressure.

\section{Numerical Examples}
\label{sec.num-examples}
We have implemented the method using \texttt{ngsxfem} \cite{LHPvW21}, an
add-on to \texttt{NGSolve}/\texttt{netgen}~\cite{Sch14,Sch97} for unfitted
finite element discretisations. The reproduction source code can be found in
the archive~\cite{vWR21a_zenodo}.

\subsection{Set-up}
\label{sec.num-examples:subsec.set-up}
We consider the background domain $\Ot = (0,1)^2$, and the initial domain of
interest is given by $\O(0) = \Ot \setminus \{\xb\in\Ot\;\vert\;
(\xb_1 - 0.5)^2 + (\xb_2 - 0.8^2)\le  0.1^2 \}$. The external force acting on
$\Sigma$ is given by $\gb = (0, -1)^T$. At $t=0$, the system is at rest, i.e.,
$\ub=\bm{0}$ and $\Ub_0=\bm{0}$. The system is considered until $t=\tend=1.5$.

As we do not have an analytical solution for this problem, we shall compare
our results against a reference solution. As quantities of interest
for comparison with this reference simulation, we consider the
position and velocity of the moving interface. The error is then
measured in the discrete space-time norm
\begin{equation*}
  \nrm{\Ub^\text{ref} - \Ubh}{\ell^2(\R^d)}^2
  \coloneqq \sum_{i=1}^{N} \dt\nrm{\Ub^\text{ref}(t^i) - \Ubh^i}{2}^2.
\end{equation*}

\begin{remark}[Reference Simulation]
\label{remark.reference-simulation}
To compute a reference simulation of the above set-up, we consider a fitted
ALE discretisation. Here we use $\PP^4$ elements together with BDF2
time stepping. As the motion of the domain is purely translational, a simple
analytical form of the ALE mapping can be given, see, e.g.,~\cite{vWRFH21}. The
PDE/ODE system is solved using a partitioned approach as in the Eulerian
setting, c.f.~\cref{remark.implementational-details}. An illustration of the
solution can be seen in \autoref{fig.reference-sol}.
\end{remark}

\begin{figure}
  \centering
  \includegraphics[width=3.1cm]{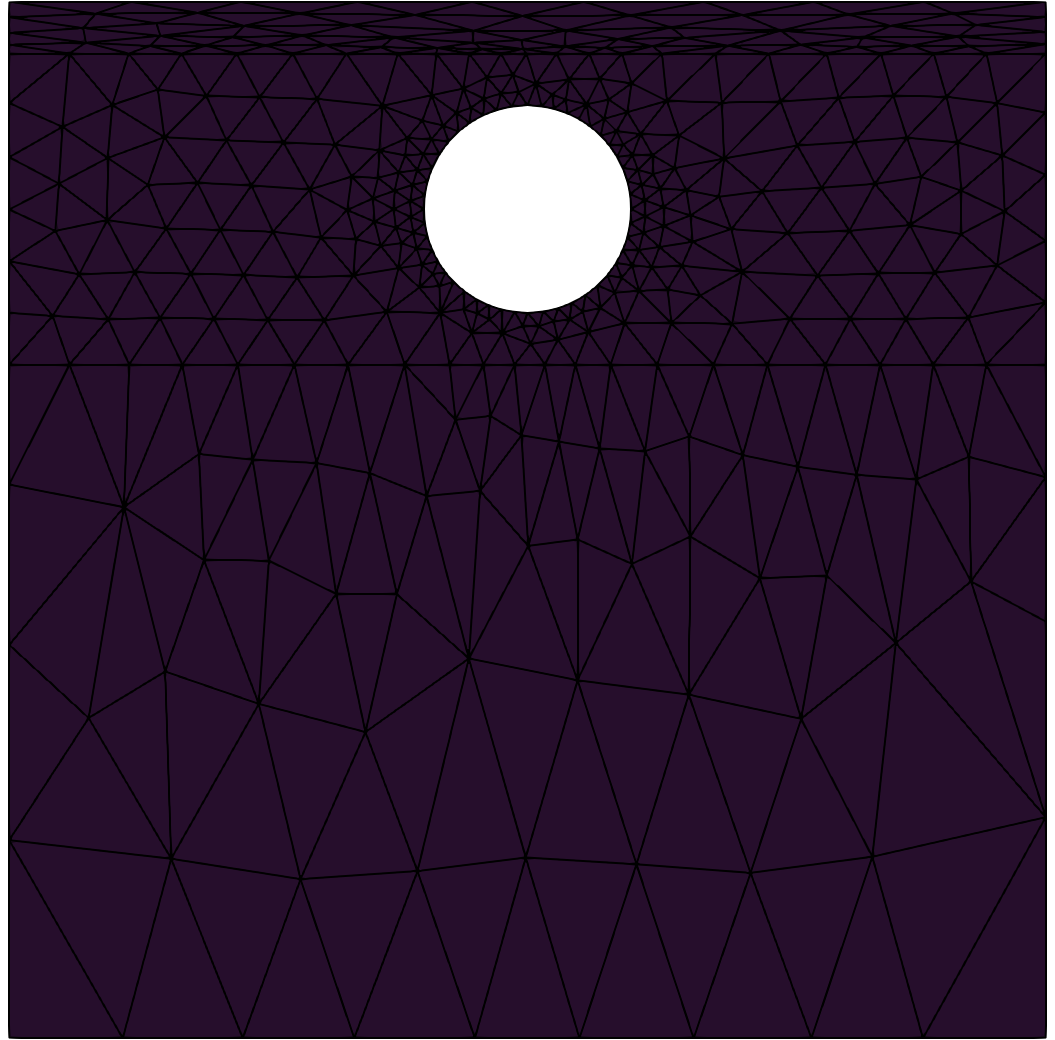}
  \includegraphics[width=3.1cm]{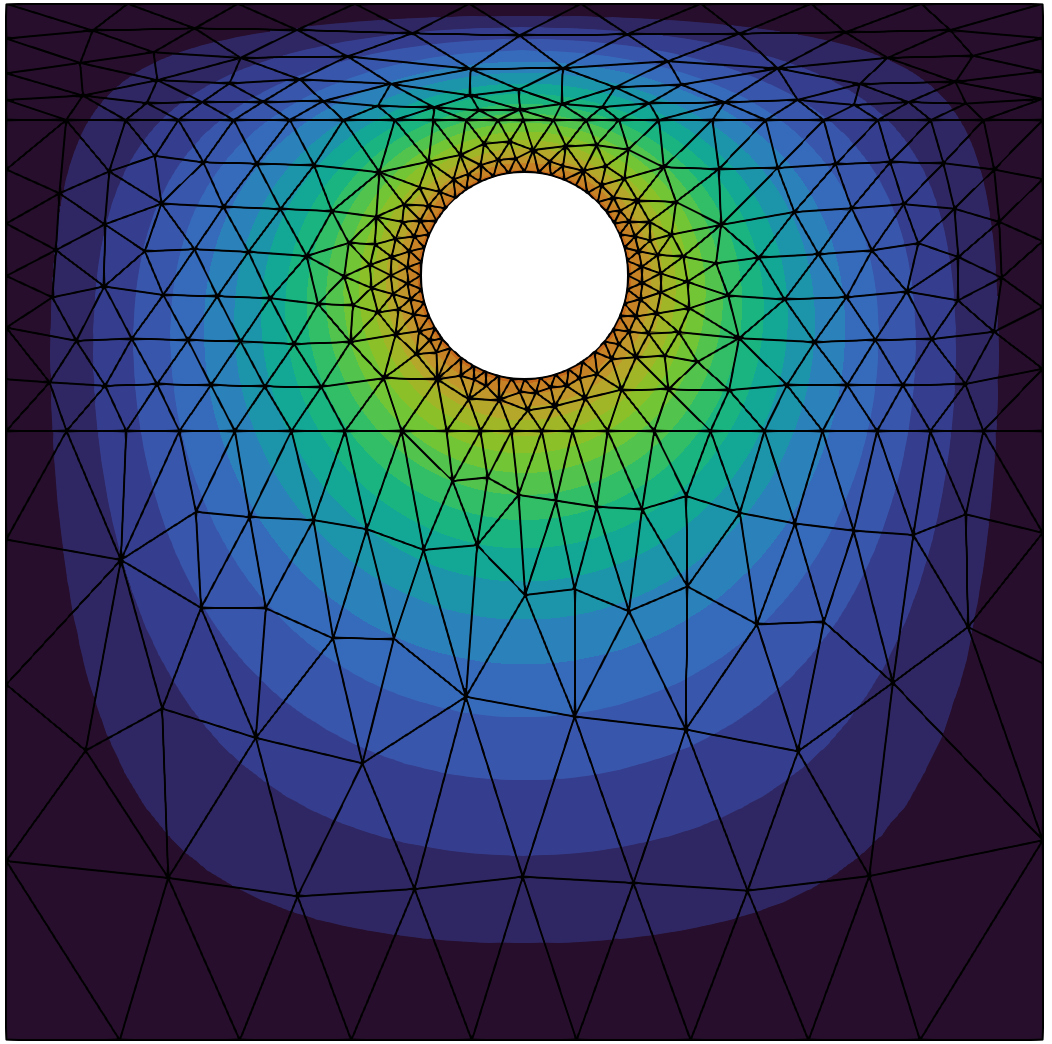}
  \includegraphics[width=3.1cm]{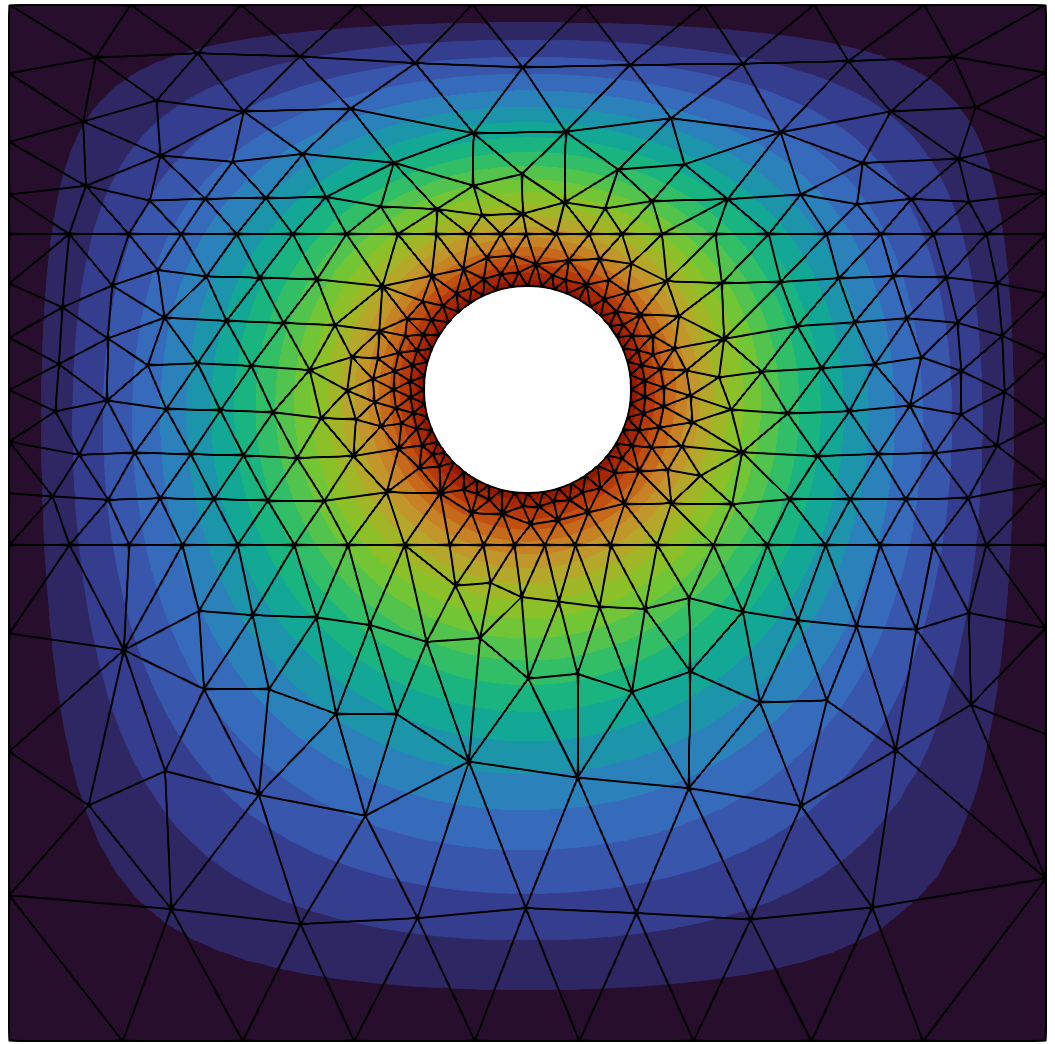}
  \includegraphics[width=3.1cm]{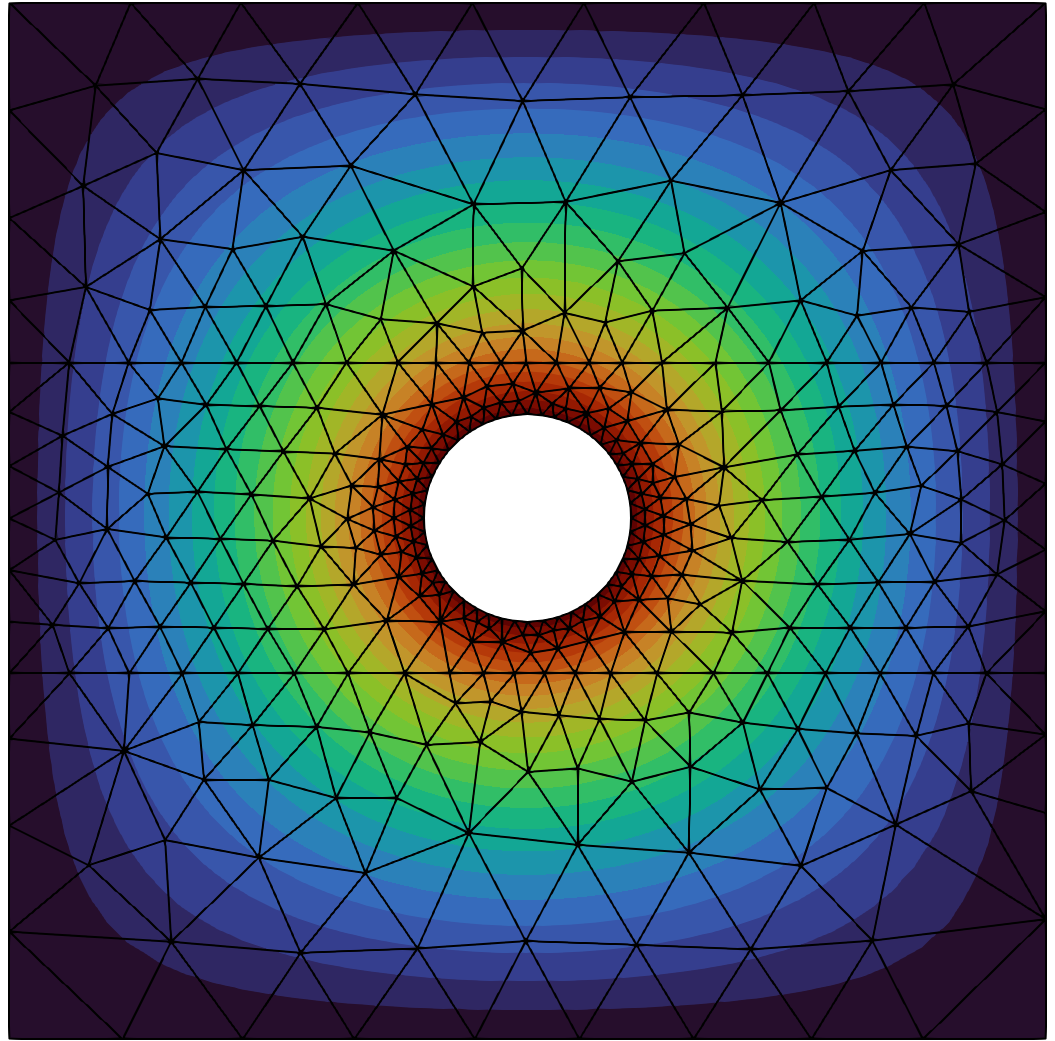}\\
  \vspace{5pt}
  \includegraphics[width=7cm]{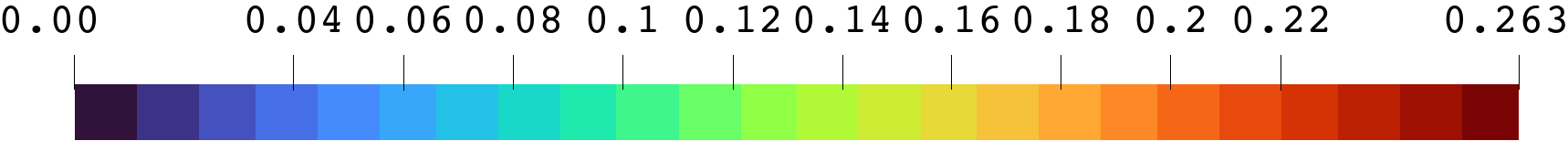}
  \caption{Solution magnitude at $t=0, 0.5, 1, 1.5$. Computed using the ALE
    method with $\hmax=0.1$, $k=3$ and $\dt=\frac{1}{200}$.}
  \label{fig.reference-sol}
\end{figure}

\begin{remark}
\label{remark.implementational-details}
To solve the coupled PDE/ODE system, we use a partitioned approach with a
relaxation in the update of the interface velocity for stability of the scheme.
To increase the convergence of the relaxation scheme to update the interface
velocity, we use Aitken's $\Delta^2$-method~\cite{KW08} to determine a good
value for the relaxation parameter. In practice, we then require three
iterations between the ODE and PDE until the relative velocity update is less
than $10^{-8}$.
\end{remark}

\subsection{Convergence Study}
\label{sec.num-examples:subsec.conv}
We consider a series of shape-regular and quasi uniform meshes of the
background domain $\Ot$, with the mesh sizes $\hmax=h_0 \cdot 2^{-L_x}$.
The initial mesh size is taken to be $h_0=0.1$. On these meshes we consider
$\PP^2/\PP^1$ elements for the velocity and Lagrange-multiplier spaces.
Similarly for the time step, we consider a series of uniformly refined time steps
of $\dt = \dt_0 \cdot 2^{-L_t}$, with the initial time step $\dt_0 =
\frac{1}{50}$. We then consider the spatial convergence using the smallest
time step and the temporal convergence on the finest mesh. The results for the
full study over each mesh/time step combination can be found in the
archive~\cite{vWR21a_zenodo}.

To define the extension strip, we set $\wninf =\vert \Ubhn\cdot\nb\vert$ in each
time step, i.e., the explicit normal interface velocity. To ensure that the
strip is wide enough to allow for acceleration of the interface, we set
$c_{\delta_h}=2$. Furthermore, the ghost penalty parameter is set to
$\gamma_s=0.1$ and the Lagrange-multiplier stabilisation parameter is set to
$\gsl=0.01$.

\paragraph{BDF1}
We consider the BDF1 time-discretisation for the bulk and interface velocities
as analysed above. The integration over the level set domains is then performed
using standard CutFEM, i.e., using a $\PP^1$ approximation of the level set to
explicitly construct the unfitted quadrature rules. As a result, we can only
expect spatial convergence of order two, due to the geometry error of order
$\OO(h^2)$. However, due to the first order time-discretisation, we expect the
temporal error to dominate most situations.

The errors resulting from the convergence study for the interface velocity
and position can be seen in \cref{fig.results:bdf1}. Here we see the expected
linear convergence with respect to the time step. Concerning the spatial 
convergence, we see the expected second order convergence until the temporal
discretisation error starts to dominate.

\begin{figure}
  \centering
  \includegraphics{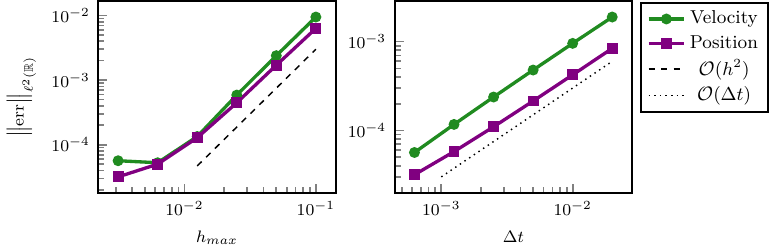}
  \caption{Error convergence for the interface velocity and position with 
    respect to the mesh size and the time step for the BDF1 scheme, $\PP^2$ 
    elements and a piecewise linear level set approximation.}
  \label{fig.results:bdf1}
\end{figure}

\paragraph{BDF2 and isoparametric mapping}
As an extension, we present some numerical results, based on a BDF2
discretisation of the time-derivatives. This is enabled, by making the
extension strip sufficiently large, such that both
$\Ohn,\Oh^{n-1}\subset\Odh{\Oh^{n-2}}$ and $\Ohn\subset\Odh{\Oh^{n-1}}$.
We achieve this by setting $c_{\delta_h}=4$.
Furthermore, to increase the geometry approximation properties of the CutFEM
method, we apply the isoparametric mapping approach introduced in~\cite{Leh16}
to increase the geometry approximation to $\OO(h^{k+1})$. For details of the
higher-order discretisation in space and time, applied to a moving domain
problem with prescribed motion, we refer to~\cite{LL21}.

The results over the same series of meshes and time steps as considered before,
can be seen in \cref{fig.results:bdf2ho}. Here, we see second-order
convergence with respect to the time step over the entire series of considered
time steps. With respect to the spatial discretisation, we observe forth-order
convergence until the temporal error begins to dominate. This suggests that the
error in the velocity and position of $\Sigma$ is dominated by the error in the
force acting from $\O$ onto $\G$, because the force is obtained with the 
accuracy of order $\OO(h^{2k})$, when computing this from the Lagrange
multiplier. The proof of this follows the same lines as for the
Babu\v{s}ka-Miller trick, see for example~\cite{BR06}.
Furthermore, since the temporal error appears to remain dominant on finer
meshes, we only see higher-order convergence over the first meshes.
\begin{figure}
  \centering
  \includegraphics{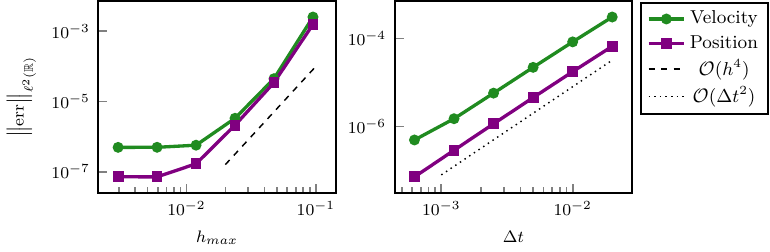}
  \caption{Error convergence for the interface velocity and position with 
    respect to the mesh size and the time step for the BDF2 scheme with $\PP^2$ 
    elements and an isoparametric CutFEM.}
  \label{fig.results:bdf2ho}
\end{figure}

\subsection{Lagrange-Multipliers vs. Nitsche}
\label{sec.num-examples:subsec.lag-vs-nit}
Our analysis relies heavily on the Lagrange-multiplier formulation to enforce
the boundary condition on the moving interface since testing with appropriate
functions then lead to some terms appearing twice with differing signs and thus
vanishing. However, CutFEM using Nitsche's method~\cite{Nit71} to implement the
boundary condition is much more commonly used, see amongst others~\cite{AB21,
BFM19, MLLR15, vWRL20}. An advantage of the Nitsche approach is that we do not
need to discretise the Lagrange-multiplier space, the resulting systems will be
smaller on the same mesh. On the other hand, we need to choose the
stabilisation parameter.

To investigate whether there is a significant numerical difference between the
two methods, we consider the BDF2 implementation together with the isoparametric
mapping and implement the boundary condition on the moving interface using
Nitsche's method. We use the symmetric version of Nitsche's method and the
penalty parameter is chosen as $40 k^2 / h$.

The results can be seen in \cref{fig.results:bdf2honitsche}. The results
are very similar compared to the Lagrange-multiplier results above. We keep
the second-order convergence in time and the higher-order convergence in space,
before the temporal error begins to dominate.

\begin{figure}
  \centering
  \includegraphics{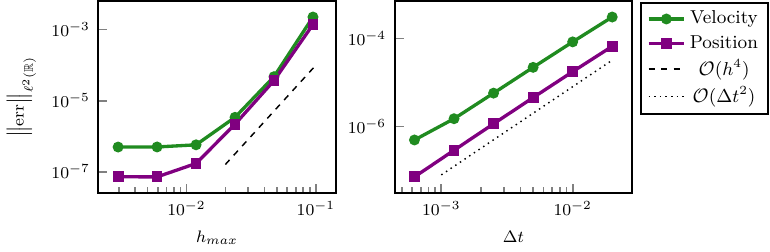}
  \caption{Error convergence for the interface velocity and position with 
    respect to the mesh size and the time step for the BDF2 scheme with $\PP^2$ 
    elements, Nitsche's method to enforce the boundary condition on the 
    moving interface and an isoparametric CutFEM.}
  \label{fig.results:bdf2honitsche}
\end{figure}

\section{Conclusions and Outlook}
\label{sec.conclusions}

In this work, we have studied an Eulerian, unfitted finite element method for
a model moving domain problem, consisting of a parabolic PDE in the bulk
together with translational coupled rigid body motion determining the motion
of the domain. The analysis of the method relied on the Lagrange multiplier
formulation we considered to implement the implicit non-homogeneous Dirichlet
boundary condition on the moving interface.
We showed stability in the temporal semi-discrete case in
\cref{lemma.temp-semi:stabil} and the stability of the fully discrete scheme
in \cref{thm.discrete-stability}. For the error analysis, we treated the
domain error resulting from the discretised scheme similarly as
the geometry approximation error is dealt with in unfitted finite element
analysis for problems with stationary domains or domains with known motion.
With this, we proved an optimal-in-time error estimate for the bulk- and
interface-velocity in the energy norm with \cref{theorem.error}. This
estimate takes a similar form as the case for prescribed motion, with the
key differences being a higher regularity assumption on the exact solution and
a stronger time step restriction, resulting from the use of a more general form
of Gronwall's lemma. 
We illustrated our theoretical results with numerical convergence studies in 
both space and time. For the time-derivative approximation, we observed optimal
order convergence for both the BDF1 and BDF2 time-derivative approximation.
Concerning the mesh size, we observed the expected second-order
convergence in the case of a piecewise linear level set approximation, while
we observed higher-order convergence when an isoparametric CutFEM approach was
used for improved geometry approximation. Furthermore, we observed that in
practice, there is no noticeable difference in the accuracy of the analysed
Lagrange-multiplier formulation and the more commonly used Nitsche formulation.

An important open issue for upcoming work is to lift \Cref{assumption.discrete-velocity}, stating the stability of the discrete interface velocity. This assumption enters the definition of the $\delta$-neighbourhood used for the ghost penalty stabilisation and is therefore central in the analysis. Further, we consider the following extensions to be of interest for future research.
To extend the presented analysis of the method to a more general setting, 
the equations governing the solid's motion should include rotational motion.
This requires additional work since the rotational component of the interface
velocity $\bm{\omega}\times\bm{r}$ is no longer independent of space.
Furthermore, the ODE governing the rotational motion is not linear for general
shapes. Secondly, the bulk equations should be generalised to full fluid
equations. Finally, the geometry approximation error inherent in CutFEM could
be included in further analysis.

\bibliographystyle{siamplain}
\bibliography{literature}

\begin{thebibliography}{10}

\bibitem{AB21}
{\sc M.~Anselmann and M.~Bause}, {\em Cut finite element methods and ghost
  stabilization techniques for space-time discretizations of the
  navier{\textendash}stokes equations}, Internat. J. Numer. Methods Fluids,
  (2022), \url{https://doi.org/10.1002/fld.5074}.

\bibitem{Bab73}
{\sc I.~Babu{\v{s}}ka}, {\em The finite element method with lagrangian
  multipliers}, Numer. Math., 20 (1973), pp.~179--192,
  \url{https://doi.org/10.1007/bf01436561}.

\bibitem{BazilevsTakizawaTezduyar2013}
{\sc Y.~Bazilevs, K.~Takizawa, and T.~Tezduyar}, {\em Computational
  Fluid-Structure Interaction: {Methods} and Applications}, Wiley, 2013,
  \url{https://doi.org/10.1002/9781118483565}.

\bibitem{BR06}
{\sc M.~Braack and T.~Richter}, {\em Solutions of 3d navier-stokes benchmark
  problems with adaptive finite elements}, Comput. {\&} Fluids, 35 (2006),
  pp.~372--392, \url{https://doi.org/10.1016/j.compfluid.2005.02.001}.

\bibitem{Bur10}
{\sc E.~Burman}, {\em Ghost penalty}, C.R. Math., 348 (2010), pp.~1217--1220,
  \url{https://doi.org/10.1016/j.crma.2010.10.006}.

\bibitem{BFM19}
{\sc E.~Burman, S.~Frei, and A.~Massing}, {\em Eulerian time-stepping schemes
  for the non-stationary {Stokes} equations on time-dependent domains}, Numer.
  Math.,  (2022), \url{https://doi.org/10.1007/s00211-021-01264-x}.

\bibitem{BH10}
{\sc E.~Burman and P.~Hansbo}, {\em Fictitious domain finite element methods
  using cut elements: {I}. {A} stabilized {Lagrange} multiplier method},
  Comput. Methods Appl. Mech. Engrg., 199 (2010), pp.~2680--2686,
  \url{https://doi.org/10.1016/j.cma.2010.05.011}.

\bibitem{BH12}
{\sc E.~Burman and P.~Hansbo}, {\em Fictitious domain finite element methods
  using cut elements: {II}. {A} stabilized {Nitsche} method}, Appl. Numer.
  Math., 62 (2012), pp.~328--341,
  \url{https://doi.org/10.1016/j.apnum.2011.01.008}.

\bibitem{DesjardinsEsteban1999}
{\sc B.~Desjardins and M.~Esteban}, {\em Existence of weak solutions for the
  motion of rigid bodies in a viscous fluid}, Arch. Rational Mech. Anal., 146
  (1999), pp.~59--71, \url{https://doi.org/10.1007/s002050050136}.

\bibitem{DGH82}
{\sc J.~Donea, S.~Giuliani, and J.~P. Halleux}, {\em An arbitrary
  {Lagrangian}-{Eulerian} finite element method for transient dynamic
  fluid-structure interactions}, Comput. Methods Appl. Mech. Engrg., 33 (1982),
  pp.~689--723, \url{https://doi.org/10.1016/0045-7825(82)90128-1}.

\bibitem{Dong2017}
{\sc J.~Dong, K.~Inthavong, and J.~Tu}, {\em Multiphase Flows in Biomedical
  Applications}, Springer Singapore, 2017, pp.~1--24,
  \url{https://doi.org/10.1007/978-981-4585-86-6_16-1}.

\bibitem{FailerMinakowskiRichter2021}
{\sc L.~Failer, P.~Minakowski, and T.~Richter}, {\em On the impact of fluid
  structure interaction in blood flow simulations}, Vietnam J. Math., 49
  (2021), pp.~169--187, \url{https://doi.org/10.1007/s10013-020-00456-6}.

\bibitem{FL17}
{\sc M.~Fourni{\'{e}} and A.~Lozinski}, {\em Stability and optimal convergence
  of unfitted extended finite element methods with {Lagrange} multipliers for
  the {Stokes} equations}, in Geometrically Unfitted Finite Element Methods and
  Applications - Proceedings of the UCL Workshop 2016, S.~Bordas, E.~Burman,
  M.~Larson, and M.~A. Olshanskii, eds., Cham, 2017, Springer, pp.~143--182,
  \url{https://doi.org/10.1007/978-3-319-71431-8_5}.

\bibitem{FR17}
{\sc S.~Frei and T.~Richter}, {\em A second order time-stepping scheme for
  parabolic interface problems with moving interfaces}, ESAIM Math. Model.
  Numer. Anal., 51 (2017), pp.~1539--1560,
  \url{https://doi.org/10.1051/m2an/2016072}.

\bibitem{Gal11}
{\sc G.~P. Galdi}, {\em An Introduction to the Mathematical Theory of the
  {Navier}-{Stokes} Equations}, Springer, New York, 2011,
  \url{https://doi.org/10.1007/978-0-387-09620-9}.

\bibitem{GLR18}
{\sc J.~Grande, C.~Lehrenfeld, and A.~Reusken}, {\em Analysis of a high-order
  trace finite element method for {PDEs} on level set surfaces}, SIAM J. Numer.
  Anal., 56 (2018), pp.~228--255, \url{https://doi.org/10.1137/16m1102203}.

\bibitem{GOR15}
{\sc S.~Gross, M.~A. Olshanskii, and A.~Reusken}, {\em A trace finite element
  method for a class of coupled bulk-interface transport problems}, ESAIM Math.
  Model. Numer. Anal., 49 (2015), pp.~1303--1330,
  \url{https://doi.org/10.1051/m2an/2015013}.

\bibitem{HH02}
{\sc A.~Hansbo and P.~Hansbo}, {\em An unfitted finite element method, based on
  {Nitsche}'s method, for elliptic interface problems}, Comput. Methods Appl.
  Mech. Engrg., 191 (2002), pp.~5537--5552,
  \url{https://doi.org/10.1016/s0045-7825(02)00524-8}.

\bibitem{HLZ16}
{\sc P.~Hansbo, M.~G. Larson, and S.~Zahedi}, {\em A cut finite element method
  for coupled bulk-surface problems on time-dependent domains}, Comput. Methods
  Appl. Mech. Engrg., 307 (2016), pp.~96--116,
  \url{https://doi.org/10.1016/j.cma.2016.04.012}.

\bibitem{HR90}
{\sc J.~G. Heywood and R.~Rannacher}, {\em Finite-element approximation of the
  nonstationary {Navier}-{Stokes} problem. {Part} {IV}: {Error} analysis for
  second-order time discretization}, SIAM J. Numer. Anal., 27 (1990),
  pp.~353--384, \url{https://doi.org/10.1137/0727022}.

\bibitem{Joh16}
{\sc V.~John}, {\em Finite element methods for incompressible flow problems},
  Springer, Cham, 2016, \url{https://doi.org/10.1007/978-3-319-45750-5}.

\bibitem{KW08}
{\sc U.~Küttler and W.~A. Wall}, {\em Fixed-point fluid-structure interaction
  solvers with dynamic relaxation}, Comput. Mech., 43 (2008), pp.~61--72,
  \url{https://doi.org/10.1007/s00466-008-0255-5}.

\bibitem{Leh15}
{\sc C.~Lehrenfeld}, {\em The {Nitsche} {XFEM}-{DG} space-time method and its
  implementation in three space dimensions}, SIAM J. Sci. Comput., 37 (2015),
  pp.~A245--A270, \url{https://doi.org/10.1137/130943534}.

\bibitem{Leh16}
{\sc C.~Lehrenfeld}, {\em High order unfitted finite element methods on level
  set domains using isoparametric mappings}, Comput. Methods Appl. Mech.
  Engrg., 300 (2016), pp.~716--733,
  \url{https://doi.org/10.1016/j.cma.2015.12.005}.

\bibitem{LHPvW21}
{\sc C.~Lehrenfeld, F.~Heimann, J.~{Preu\ss}, and H.~von Wahl}, {\em ngsxfem:
  {Add-on} to {NGSolve} for geometrically unfitted finite element
  discretizations}, J. Open Source Softw., 6 (2021), p.~3237,
  \url{https://doi.org/10.21105/joss.03237}.

\bibitem{LO19}
{\sc C.~Lehrenfeld and M.~A. Olshanskii}, {\em An {Eulerian} finite element
  method for {PDE}s in time-dependent domains}, ESAIM Math. Model. Numer.
  Anal., 53 (2019), pp.~585--614, \url{https://doi.org/10.1051/m2an/2018068}.

\bibitem{LR13}
{\sc C.~Lehrenfeld and A.~Reusken}, {\em Analysis of a {Nitsche} {XFEM}-{DG}
  discretization for a class of two-phase mass transport problems}, SIAM J.
  Numer. Anal., 51 (2013), pp.~958--983,
  \url{https://doi.org/10.1137/120875260}.

\bibitem{LL21}
{\sc Y.~Lou and C.~Lehrenfeld}, {\em Isoparametric unfitted {BDF} -- {Finite}
  element method for {PDEs} on evolving domains}, May 2021,
  \url{https://arxiv.org/abs/2105.09162}.

\bibitem{Maggi2013}
{\sc F.~Maggi}, {\em The settling velocity of mineral, biomineral, and
  biological particles and aggregates in water}, J. Geophys. Res. Oceans, 118
  (2013), pp.~2118--2132, \url{https://doi.org/10.1002/jgrc.20086}.

\bibitem{Maity2017}
{\sc D.~Maity and M.~Tucsnak}, {\em A maximal regularity approach to the
  analysis of some particulate flows}, in Particles in Flows, T.~Bodn{\'a}r,
  G.~P. Galdi, and {\v{S}}.~Ne{\v{c}}asov{\'a}, eds., Springer International
  Publishing, Cham, 2017, pp.~1--75,
  \url{https://doi.org/10.1007/978-3-319-60282-0_1}.

\bibitem{MLLR15}
{\sc A.~Massing, M.~Larson, A.~Logg, and M.~Rognes}, {\em A {Nitsche}-based cut
  finite element method for a fluid-structure interaction problem}, Commun.
  Appl. Math. Comput. Sci., 10 (2015), pp.~97--120,
  \url{https://doi.org/10.2140/camcos.2015.10.97}.

\bibitem{Nit71}
{\sc J.~Nitsche}, {\em \"uber ein {Variationsprinzip} zur {L\"osung} von
  {Dirichlet}-{Problemen} bei {Verwendung} von {Teilr\"aumen}, die keinen
  {Randbedingungen} unterworfen sind}, Abh. Math. Semin. Univ. Hambg., 36
  (1971), pp.~9--15, \url{https://doi.org/10.1007/bf02995904}.

\bibitem{OR14}
{\sc M.~A. Olshanskii and A.~Reusken}, {\em Error analysis of a space-time
  finite element method for solving {PDEs} on evolving surfaces}, SIAM J.
  Numer. Anal., 52 (2014), pp.~2092--2120,
  \url{https://doi.org/10.1137/130936877}.

\bibitem{ORX14}
{\sc M.~A. Olshanskii, A.~Reusken, and X.~Xu}, {\em An eulerian space-time
  finite element method for diffusion problems on evolving surfaces}, SIAM J.
  Numer. Anal., 52 (2014), pp.~1354--1377,
  \url{https://doi.org/10.1137/130918149}.

\bibitem{OX17}
{\sc M.~A. Olshanskii and X.~Xu}, {\em A trace finite element method for {PDEs}
  on evolving surfaces}, SIAM J. Sci. Comput., 39 (2017), pp.~A1301--A1319,
  \url{https://doi.org/10.1137/16m1099388}.

\bibitem{Peskin1972}
{\sc C.~S. Peskin}, {\em Flow patterns around heart valves: {A} numerical
  method}, J. Comput. Phys., 10 (1972), pp.~252--271,
  \url{https://doi.org/10.1016/0021-9991(72)90065-4}.

\bibitem{Pre18}
{\sc J.~Preu\ss}, {\em Higher order unfitted isoparametric space-time {FEM} on
  moving domains}, master's thesis, Georg-August-Universit\"at G\"ottingen,
  2018, \url{https://doi.org/10.25625/UACWXS}.

\bibitem{Richter2013}
{\sc T.~Richter}, {\em A fully {Eulerian} formulation for fluid-structure
  interactions}, J. Comput. Phys., 223 (2013), pp.~227--240,
  \url{https://doi.org/10.1016/j.jcp.2012.08.047}.

\bibitem{Ric17}
{\sc T.~Richter}, {\em Fluid-structure Interactions}, vol.~118 of Lecture Notes
  in Computational Science and Engineering, Springer, Cham, 2017,
  \url{https://doi.org/10.1007/978-3-319-63970-3}.

\bibitem{Sch97}
{\sc J.~Sch\"oberl}, {\em {NETGEN} an advancing front {2D}/{3D}-mesh generator
  based on abstract rules}, Comput. Vis. Sci., 1 (1997), pp.~41--52,
  \url{https://doi.org/10.1007/s007910050004}.

\bibitem{Sch14}
{\sc J.~Sch\"oberl}, {\em C++11 implementation of finite elements in
  {NGSolve}}, tech. report, Sept. 2014,
  \url{http://www.asc.tuwien.ac.at/~schoeberl/wiki/publications/ngs-cpp11.pdf}
  (accessed 2020-10-02).

\bibitem{ShamanskiySimeon21}
{\sc A.~Shamanskiy and B.~Simeon}, {\em Mesh moving techniques in
  fluid-structure interaction: robustness, accumulated distortion and
  computational efficiency.}, Comput. Mech., 67 (2021), pp.~583--600,
  \url{https://doi.org/10.1007/s00466-020-01950-x}.

\bibitem{Ste70}
{\sc E.~M. Stein}, {\em Singular Integrals and Differentiability Properties of
  Functions}, vol.~30 of Princeton Mathematical Series, Princeton University
  Press, Princeton, NJ, 1970.

\bibitem{Sundaresan2003}
{\sc S.~Sundaresan}, {\em Instabilities in fluidized beds}, Annu. Rev. Fluid
  Mech.,  (2003), \url{https://doi.org/10.1146/annurev.fluid.35.101101.161151}.

\bibitem{Takahashi2003}
{\sc T.~Takahashi}, {\em Analysis of strong solutions for the equations
  modeling the motion of a rigid-fluid system in a bounded domain}, Adv.
  Differential. Equ., 8 (2003), pp.~1499--1532.

\bibitem{vWR21a_zenodo}
{\sc H.~von Wahl and T.~Richter}, {\em Error analysis for a parabolic {PDE}
  model problem on a coupled moving domain in a fully {Eulerian} framework -
  {Reproduction} scripts}, Nov. 2021,
  \url{https://doi.org/10.5281/zenodo.6505243}.

\bibitem{vWR21}
{\sc H.~von Wahl and T.~Richter}, {\em Using a deep neural network to predict
  the motion of under-resolved triangular rigid bodies in an incompressible
  flow}, Internat. J. Numer. Methods Fluids,  (2021),
  \url{https://doi.org/10.1002/fld.5037}.

\bibitem{vWRFH21}
{\sc H.~von Wahl, T.~Richter, S.~Frei, and T.~Hagemeier}, {\em Falling balls in
  a viscous fluid with contact: {Comparing} numerical simulations with
  experimental data}, Phys. Fluids, 33 (2021), 033304,
  \url{https://doi.org/10.1063/5.0037971}.

\bibitem{vWRL20}
{\sc H.~von Wahl, T.~Richter, and C.~Lehrenfeld}, {\em An unfitted {Eulerian}
  finite element method for the time-dependent {Stokes} problem on moving
  domains}, IMA J. Numer. Anal., 42 (2021), pp.~2505--2544,
  \url{https://doi.org/10.1093/imanum/drab044}.

\bibitem{Zah18}
{\sc S.~Zahedi}, {\em A space-time cut finite element method with quadrature in
  time}, in Geometrically Unfitted Finite Element Methods and Applications -
  Proceedings of the UCL Workshop 2016, S.~Bordas, E.~Burman, M.~Larson, and
  M.~A. Olshanskii, eds., Cham, Mar. 2018, Springer, pp.~281--306,
  \url{https://doi.org/10.1007/978-3-319-71431-8_9}.

\end{thebibliography}

\end{document}